\definecolor{white}{rgb}{1,1,1}
\definecolor{Black}{rgb}{0,0,0}
\newtheorem{thm}{Theorem}[section]
\newtheorem{defn}[thm]{Definition}
\newtheorem{rem}[thm]{Remark}
\newtheorem{ex}[thm]{Example}
\newtheorem{lem}[thm]{Lemma}
\newtheorem{prop}[thm]{Proposition}
\newtheorem{cor}[thm]{Corollary}
\newtheorem{prob}[thm]{Problem}
\newcommand{\ap}{\includegraphics[width=0.08\linewidth]{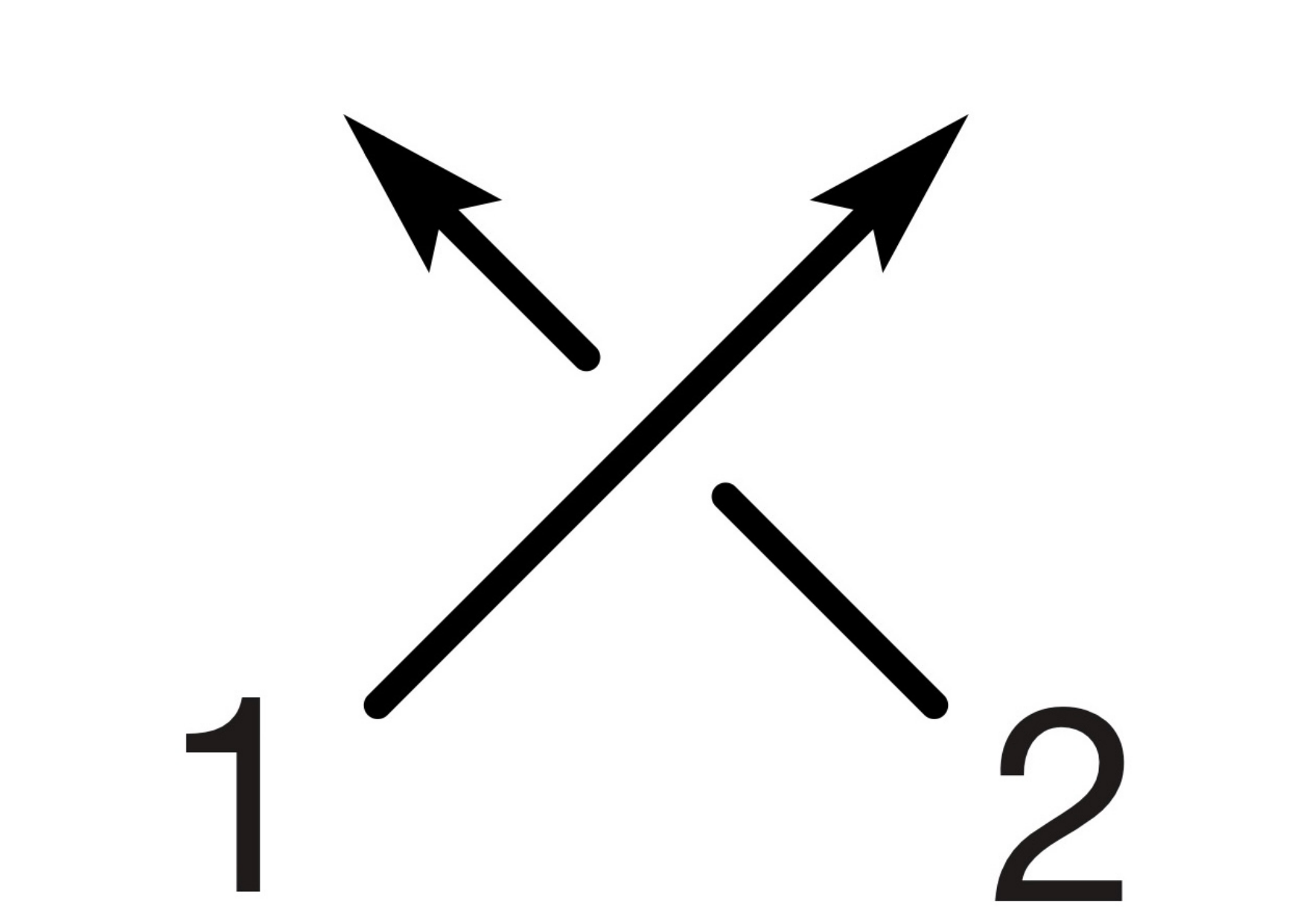}}
\newcommand{\bm}{\includegraphics[width=0.08\linewidth]{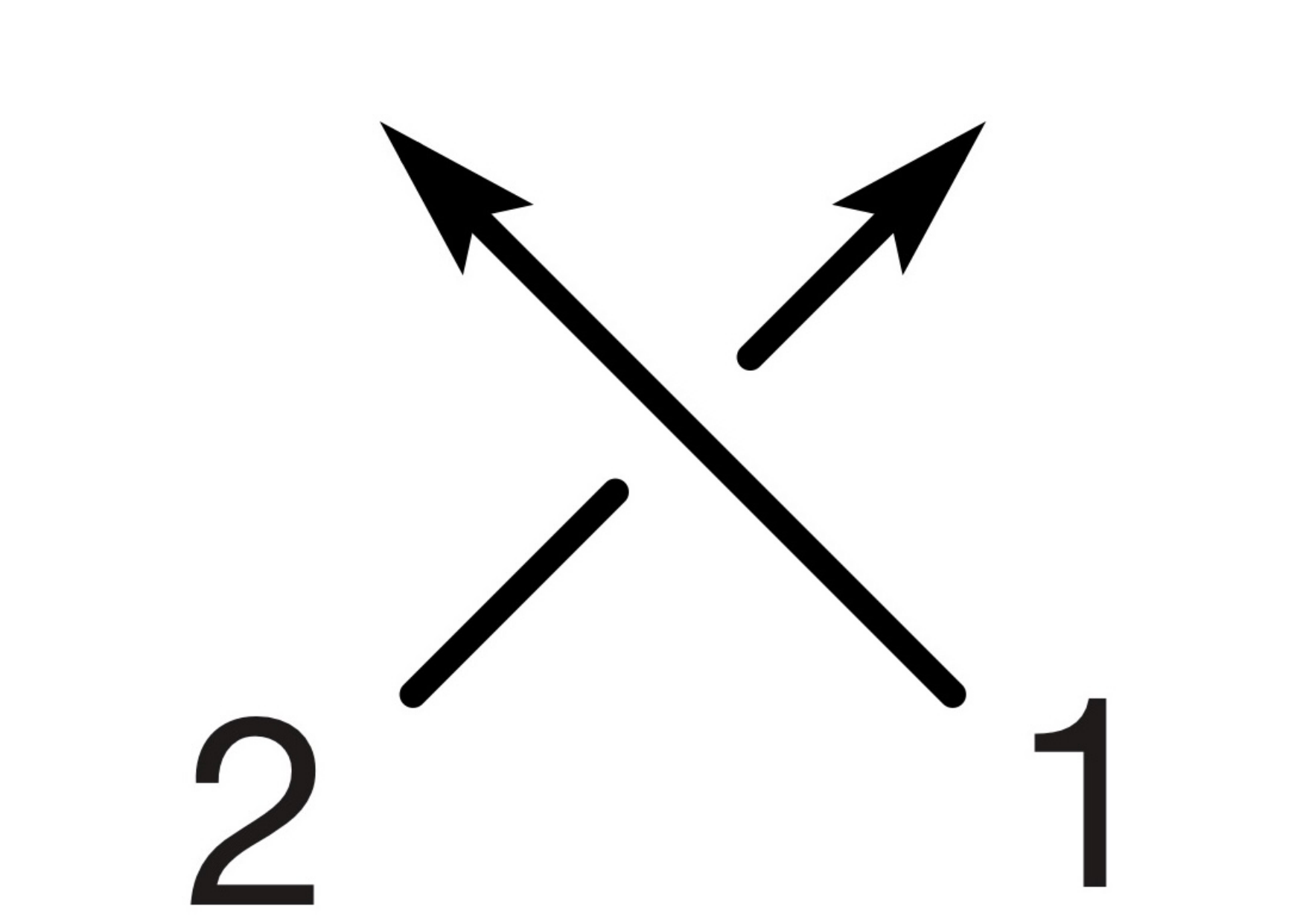}}
\newcommand{\R}{\mathcal{R}}
\newcommand{\II}{I\!I}
\newcommand{\III}{I\!I\!I}
\newcommand{\tI}{\textup{type~I}}
\newcommand{\tII}{\textup{type~I\!I}}
\newcommand{\tIII}{\textup{type~I\!I\!I}}
\newcommand{\oneA}{\textup{1a}}
\newcommand{\oneB}{\textup{1b}}
\newcommand{\oneC}{\textup{1c}}
\newcommand{\oneD}{\textup{1d}}
\newcommand{\oneAS}{\operatorname{1\ast}}
\newcommand{\oneSH}{\operatorname{1\sharp}}
\newcommand{\twoA}{\textup{2a}}
\newcommand{\twoB}{\textup{2b}}
\newcommand{\twoC}{\textup{2c}}
\newcommand{\twoD}{\textup{2d}}
\newcommand{\threeA}{\textup{3a}}
\newcommand{\threeB}{\textup{3b}}
\newcommand{\threeC}{\textup{3c}}
\newcommand{\threeD}{\textup{3d}}
\newcommand{\threeE}{\textup{3e}}
\newcommand{\threeF}{\textup{3f}}
\newcommand{\threeG}{\textup{3g}}
\newcommand{\threeH}{\textup{3h}}
\newcommand{\threeAS}{\operatorname{3\ast}}
\title{Minimal Generating sets of Reidemeister moves}
\author{Noboru Ito}
\address{Department of Mathematics, Faculty of Engineering, Shinshu University, 4-17-1 Wakasato, Nagano 380-8553, Japan}
\email{nito@shinshu-u.ac.jp}
\author{Yuichiro Iwamoto}
\address{Department of Science and Technology, Graduate School of Medicine, Science and Technology,
Shinshu University, 3-1-1 Asahi, Matsumoto, Nagano 390-8626, Japan}
\email{IwamotoY.math@gmail.com}
\date{November 25,2025.}
\begin{document}
\maketitle
\begin{abstract}
We determine whether each known generating set of arbitrary oriented Reidemeister moves is  minimal. We then provide a complete classification of minimal generating sets that include a  coherent Reidemeister move of $\tII$. 
We also classify all minimal generating sets that include a braid-type Reidemeister move of $\tIII$.  Beyond these two cases, we identify $16$ possible candidates for minimal generating sets.  Among them, we prove that $12$ are indeed minimal, whereas the minimality and even the generating property of the remaining $4$ sets remains unsolved (cf.~Remark~\ref{remhomotopy}).  
\end{abstract}
\section{Introduction}\label{Intro}
Reidemeister moves are local replacements of knot, link, or tangle diagrams, which are generic plane curves with transverse double points and over/under  information.  Two diagrams represent the same oriented link/tangle if they are related by a finite sequence of local deformations, each supported inside an oriented embedded disk in the plane, called a \emph{changing disk}: 
\vspace{-0.3cm}
\begin{figure}[h]
    \centering
    \includegraphics[width=0.8\linewidth]{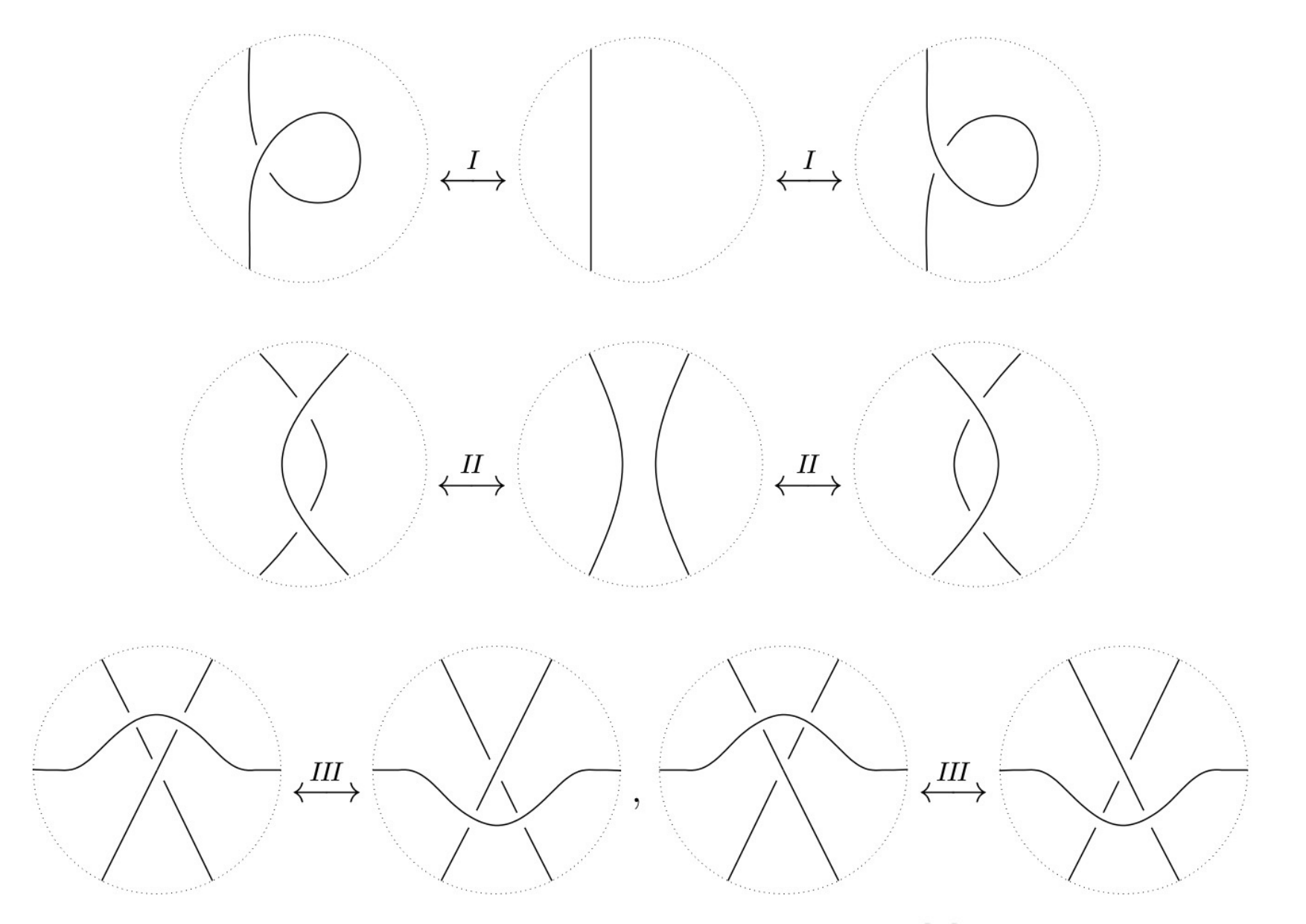}
    \label{UnoriReide}
\end{figure}
\vspace{-0.5cm}



Questions of oriented generating sets have  long circulated.   
In \cite{Polyak2010}, Polyak  
showed that at least four moves are necessary.

The generating set problem of Reidemeister moves asks: 

\begin{prob}\label{prob:mgs}
What is the smallest possible subset of oriented Reidemeister moves that suffices to generate all oriented Reidemeister moves?
\end{prob}

It is known that any such generating set must contain at least one move of \tIII, one of \tII, and at least two of \tI, making four moves the absolute minimum \cite{Ostlund2001, Manturov2004, Hagge2006, Polyak2010}.  Hence, the essential problem is as follows: 

\begin{prob}
Give every minimal generating set containing a given \tIII.  
\end{prob}

\noindent{\textbf{Main Result.}}
We determine whether each known generating set of arbitrary oriented Reidemeister moves is  minimal. We then provide a complete classification of minimal generating sets that include a non-braid-type Reidemeister move of $\tII$. 
We also classify all minimal generating sets that include a braid-type Reidemeister move of $\tIII$. 
%

Our concrete statements are presented in Section~\ref{sec:MainR}.  

See  Tables~\ref{table:classificationAH}, \ref{table:classificationBCD}, and\ref{table:classificationDEF}
for all  $336$ ($= 8 \times {}_4 C_2 \times {}_4 C_2$ $+  2 \times {}_4 C_1 \times {}_4 C_2$) cases and their status. 


\begin{table}[h!]
\caption{Minimal Generating sets of Reidemeister moves.  The symbols $ \oneA, \oneB, \dots, \threeH$ of  Reidemeister moves in the table refer to move types defined in the next section.
The symbol  MGS stands for a minimal generating set.
The symbol “?” indicates that it is not known whether the set is a generating set or not.}
\label{table:classification}
\begin{center}
\begin{tabular}{|c|c|c|c|} 
    \hline
        For $ \threeA$ (non-braid-type)       &          & For $\threeH$ (non-braid-type)             &              \\
    \hline
   $\{\threeA, \twoA, \oneA, \oneB \}$ & MGS    & $\{\threeH, \twoA, \oneA, \oneB \}$ &  ?          \\
   $\{\threeA, \twoA, \oneA, \oneC \}$ & MGS    & $\{\threeH, \twoA, \oneA, \oneC \}$ & MGS         \\
   $\{\threeA, \twoA, \oneB, \oneD \}$ & MGS    & $\{\threeH, \twoA, \oneB, \oneD \}$ & MGS         \\
   $\{\threeA, \twoA, \oneC, \oneD \}$ &  ?     & $\{\threeH, \twoA, \oneC, \oneD \}$ & MGS         \\
   $\{\threeA, \twoB, \oneA, \oneB \}$ & MGS    & $\{\threeH, \twoB, \oneA, \oneB \}$ &  ?          \\
   $\{\threeA, \twoB, \oneA, \oneC \}$ & MGS    & $\{\threeH, \twoB, \oneA, \oneC \}$ & MGS         \\
   $\{\threeA, \twoB, \oneB, \oneD \}$ & MGS    & $\{\threeH, \twoB, \oneB, \oneD \}$ & MGS         \\
   $\{\threeA, \twoB, \oneC, \oneD \}$ &  ?     & $\{\threeH, \twoB, \oneC, \oneD \}$ & MGS         \\

   $\{\threeA, \twoC, \twoD, \oneA, \oneB \}$ & MGS      & $\{\threeH, \twoC, \twoD, \oneA, \oneB \}$ & MGS      \\
   $\{\threeA, \twoC, \twoD, \oneA, \oneC \}$ & MGS      & $\{\threeH, \twoC, \twoD, \oneA, \oneC \}$ & MGS      \\
   $\{\threeA, \twoC, \twoD, \oneB, \oneD \}$ & MGS      & $\{\threeH, \twoC, \twoD, \oneB, \oneD \}$ & MGS      \\
   $\{\threeA, \twoC, \twoD, \oneC, \oneD \}$ & MGS      & $\{\threeH, \twoC, \twoD, \oneC, \oneD \}$ & MGS      \\
   \hline
        For $\threeB$ (braid-type)            &          & For $\threeG$ (braid-type)                 &         
       \\
    \hline
   $\{\threeB, \twoC, \twoD, \oneA, \oneB \}$ & MGS      & $\{\threeG, \twoC, \twoD, \oneA, \oneB \}$ & MGS           \\
   $\{\threeB, \twoC, \twoD, \oneA, \oneC \}$ & MGS      & $\{\threeG, \twoC, \twoD, \oneA, \oneC \}$ & MGS           \\
   $\{\threeB, \twoC, \twoD, \oneB, \oneD \}$ & MGS      & $\{\threeG, \twoC, \twoD, \oneB, \oneD \}$ & MGS           \\
   $\{\threeB, \twoC, \twoD, \oneC, \oneD \}$ & MGS      & $\{\threeG, \twoC, \twoD, \oneC, \oneD \}$ & MGS           \\
    \hline
    For $    \threeC$ (braid-type)            &          & For   $\threeE$ (braid-type)           &              \\
    \hline
   $\{\threeC, \twoC, \twoD, \oneA, \oneB \}$ & MGS      & $\{\threeE, \twoC, \twoD, \oneA, \oneB \}$ & MGS           \\
   $\{\threeC, \twoC, \twoD, \oneA, \oneC \}$ & MGS      & $\{\threeE, \twoC, \twoD, \oneA, \oneC \}$ & MGS           \\
   $\{\threeC, \twoC, \twoD, \oneB, \oneD \}$ & MGS      & $\{\threeE, \twoC, \twoD, \oneB, \oneD \}$ & MGS           \\
   $\{\threeC, \twoC, \twoD, \oneC, \oneD \}$ & MGS      & $\{\threeE, \twoC, \twoD, \oneC, \oneD \}$ & MGS           \\
\hline
    For $     \threeD$ (braid-type)           &          & For $\threeF$ (braid-type)                 &              \\
    \hline
   $\{\threeD, \twoC, \twoD, \oneA, \oneB \}$ & MGS      & $\{\threeF, \twoC, \twoD, \oneA, \oneB \}$ & MGS           \\
   $\{\threeD, \twoC, \twoD, \oneA, \oneC \}$ & MGS      & $\{\threeF, \twoC, \twoD, \oneA, \oneC \}$ & MGS           \\
   $\{\threeD, \twoC, \twoD, \oneB, \oneD \}$ & MGS      & $\{\threeF, \twoC, \twoD, \oneB, \oneD \}$ & MGS           \\
   $\{\threeD, \twoC, \twoD, \oneC, \oneD \}$ & MGS      & $\{\threeF, \twoC, \twoD, \oneC, \oneD \}$ & MGS           \\
\hline
\end{tabular}
\end{center}
\end{table}


\noindent{\textbf{Open Problem.}}
The generating set status for $\{\threeA, \twoA, \twoB, \oneC, \oneD\}$ and its mirror remains open (cf.~Remark~\ref{remhomotopy}).  

\clearpage

\section{Definitions and notations}\label{def}

\subsection{Definitions}

\begin{defn}\label{def:symbolRM}
    Oriented Reidemeister moves are defined as follows.\\
    \vspace{-2.5cm}
    \begin{figure}[h]
        \centering
        \includegraphics[width=0.95\linewidth]{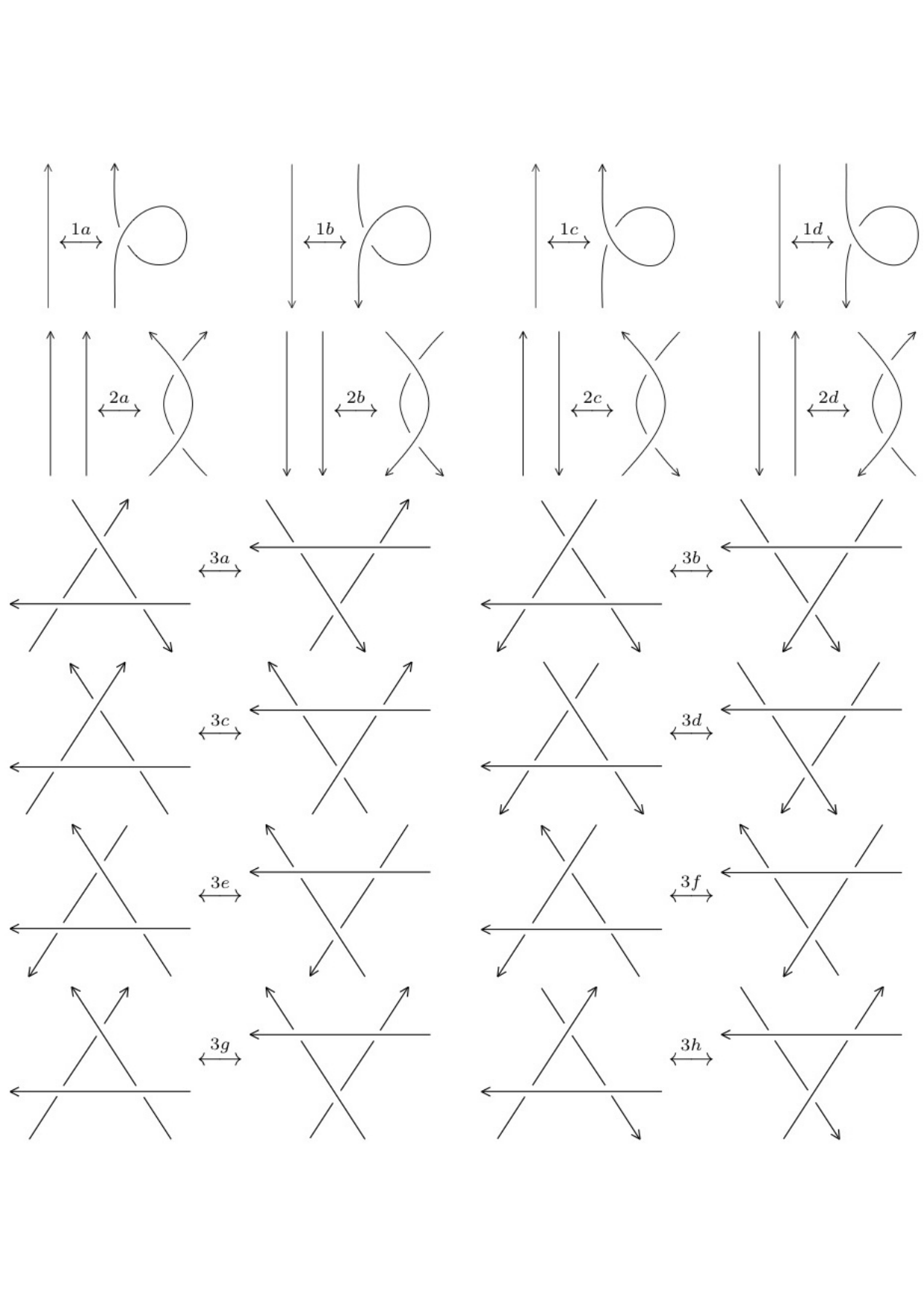}
    \end{figure}
    \vspace{-2.5cm}
\end{defn}
    The moves of \tIII~are classified into braid-type and non-braid-type: $\threeA$ and $\threeH$ are non-braid-type, while $\threeB, \threeC, \threeD, \threeE, \threeF, \threeG$ are braid-type.
    Similarly, among the moves of \tII, $\twoC$ and $\twoD$ are non-braid-type, and $\twoA, \twoB$ are braid-type.
\begin{defn}
    We denote by $\R I, \R \II, \R \III$ the set of all move of \tI, \tII, \tIII,
    and let $\R \coloneq \R I \cup \R \II \cup  \R \III$. 
\end{defn}    
\begin{defn}
    We call a set $S \subset \R$  a \emph{generating set}, 
    if any  move in $\R$ is obtained by a finite sequence of isotopies and from the set $S$ inside the changing disk of the move.
\end{defn}
    We denote by $GS$ the set of all generating sets. 
    For $\ast \in \{ \textup{a, b, c, d, e, f, g, h} \}$, 
    we define $GS_\ast \coloneq \{ S \in GS \mid S \cap \R \III  = \{ \threeAS \}  \}$. 
    Note that $GS_a \cup GS_b \cup \dots \cup GS_h \subsetneq GS$.
\begin{defn}\label{def:minimal}
    The set $S \in GS$ is a \emph{minimal generating set} 
    if for any $m \in S$, $ S \setminus \{m\} \notin GS .$ 
    For $\ast \in \{a, b, c, d, e, f, g, h \}$, 
    we denote $\textup{MGS}_\ast \coloneq \{ S \in GS_\ast \mid  S:minimal \}$.
    Let $\textup{MGS} \coloneq \textup{MGS}_a \cup \textup{MGS}_b \cup \dots \cup \textup{MGS}_h$.
\end{defn}

\begin{defn}
    We say that a set $S\in GS$ is coherent $\textup{\II}$ if, among $\tII$ moves,
    S contains precisely $\twoC$ and $\twoD$ $($i.e., neither $\twoA$ nor $\twoB$$)$.
    Likewise, $S$ is coherent $\textup{\III}$ if, among the non-braid $\tIII$ moves, 
    S contains exactly one $($i.e., $\threeA$ or $\threeH$$)$.
\end{defn}

\begin{defn}
    At every crossing in the before-and-after diagrams, switch the over- and under-strands $($i.e., perform a crossing switch at every crossing$)$.
    We call this ``\emph{Mirror}''.
\end{defn}

\begin{center}
\begin{tabular}{|c|c|c|} \hline
   Mirror pair of $\tI$ & Mirror pair of $\tII$ &   Mirror pair of $\tIII$ \\ \hline
    $\oneA   \overset{\text{Mirror}}{\longleftrightarrow} \oneC$ 
   &$\twoA   \overset{\text{Mirror}}{\longleftrightarrow} \twoB$
   &$\threeA \overset{\text{Mirror}}{\longleftrightarrow} \threeH$ \\
    $\oneB   \overset{\text{Mirror}}{\longleftrightarrow} \oneD$ 
   &$\twoC   \overset{\text{Mirror}}{\longleftrightarrow} \twoC$
   &$\threeB \overset{\text{Mirror}}{\longleftrightarrow} \threeG$ \\
    
   &$\twoD   \overset{\text{Mirror}}{\longleftrightarrow} \twoD$
   &$\threeC \overset{\text{Mirror}}{\longleftrightarrow} \threeE$ \\
    
   &
   & $\threeD \overset{\text{Mirror}}{\longleftrightarrow} \threeF$ \\   
    \hline
\end{tabular}
\end{center}

\begin{cor}
    If $S$ is a generating set, 
    the set $S^{\prime}$, consisting of the Mirror of the elements of S, is also a generating set.  
\end{cor}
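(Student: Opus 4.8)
The plan is to exploit the fact that the Mirror operation is an involution that acts as a self-symmetry of the entire system of oriented Reidemeister moves. First I would observe that performing a crossing switch at every crossing, and then doing so again, restores the original diagram; hence Mirror is an involution on the set of all diagrams, and the induced map $\mu \colon \R \to \R$ recorded in the table above is a bijection satisfying $\mu \circ \mu = \mathrm{id}$. In particular $\mu$ restricts to a bijection on each of $\R I$, $\R \II$, and $\R \III$, and for any subset $S \subset \R$ we have $S' = \mu(S)$ with $\mu(S') = S$.

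Next I would verify that Mirror is compatible with the two operations that define ``being generated'': planar isotopy and local replacement inside a changing disk. Applying an isotopy to a diagram and then switching all crossings yields the same result as switching all crossings and then applying the same isotopy, since isotopy transports the over/under data along without altering it; thus Mirror carries isotopies to isotopies. Likewise, if a diagram $D_2$ is obtained from $D_1$ by applying a move $m \in S$ inside a changing disk $\Delta$, then applying Mirror globally turns this into the statement that $\mu(D_2)$ is obtained from $\mu(D_1)$ by applying the move $\mu(m) \in S'$ inside the same disk $\Delta$, now carrying the switched crossing data. In short, Mirror is \emph{functorial} on derivations: it sends a finite sequence of isotopies and $S$-moves realizing a move $r$ to a finite sequence of isotopies and $S'$-moves realizing $\mu(r)$.

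With these two facts in hand the corollary is immediate. Let $S$ be a generating set and let $r \in \R$ be arbitrary. Since $\mu(r) \in \R$ and $S$ generates, there is a finite sequence of isotopies and moves from $S$ that realizes $\mu(r)$ inside its changing disk. Applying Mirror to this entire sequence and using $\mu \circ \mu = \mathrm{id}$, I obtain a finite sequence of isotopies and moves from $S' = \mu(S)$ that realizes $\mu(\mu(r)) = r$. As $r$ was arbitrary, every move in $\R$ is generated by $S'$, so $S' \in GS$.

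The only genuinely geometric point, and the step I would treat most carefully, is the compatibility of Mirror with local replacement: one must check that the switched local picture inside $\Delta$ is exactly the before/after pair of the move $\mu(m)$ as listed in Definition~\ref{def:symbolRM}, i.e.\ that the table of Mirror pairs is correct move by move. Once that bookkeeping is confirmed, the algebraic argument above closes the proof with no further work.
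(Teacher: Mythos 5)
Your proof is correct and is essentially the argument the paper has in mind: the paper states this corollary without any written proof, treating it as immediate from the fact that Mirror is an involution of $\R$ (as recorded in the mirror-pair table) that commutes with isotopy and conjugates a move $m$ applied in a changing disk into the move $\mu(m)$ applied in the same disk. Your write-up simply makes that implicit reasoning explicit, correctly isolating the one geometric check needed (that the mirror-pair table is right move by move), so nothing is missing.
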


\subsection{Facts}
\begin{defn}
    We call the pairs $(\oneA, \oneB),(\oneA, \oneC), (\oneB, \oneD)$ and $(\oneC, \oneD)$ compatible pairs.
\end{defn}

\begin{lem}[Polyak,  {\cite[Lemma~3.1]{Polyak2010}}]\label{[Pol]3.1}
       Any generating set must contain at least one compatible pair.
\end{lem}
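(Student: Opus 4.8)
The plan is to prove the contrapositive by producing, for each way a set can fail to contain a compatible pair, an integer-valued quantity that every move of the set leaves fixed while some Reidemeister move changes it. The two invariants I would use are the writhe $w$ (the signed count of crossings) and the rotation number $\rho$ (the Whitney degree, i.e.\ the total turning of the oriented tangent). Both are unchanged by every $\tII$ and $\tIII$ move — braid-type or not — because such moves neither alter the net crossing-sign count (the two crossings created in a $\tII$ move have opposite signs, and a $\tIII$ move merely rearranges three persistent crossings) nor add net turning to the tangent; I would verify this directly from the definitions. Thus the entire question is controlled by the action of the four $\tI$ moves on the pair $(w,\rho)$.

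First I would record the combinatorial shape of compatibility: the four compatible pairs $(\oneA,\oneB),(\oneA,\oneC),(\oneB,\oneD),(\oneC,\oneD)$ form a $4$-cycle on the four $\tI$ moves, with cyclic order $\oneA,\oneB,\oneD,\oneC$, whose only non-edges are the incompatible pairs $(\oneA,\oneD)$ and $(\oneB,\oneC)$. Hence a set $S$ contains no compatible pair precisely when $S\cap\R I$ is an independent set of this cycle, i.e.\ $S\cap\R I\subseteq\{\oneA,\oneD\}$ or $S\cap\R I\subseteq\{\oneB,\oneC\}$. Next I would read off from the figures of Definition~\ref{def:symbolRM} the increment $(\Delta w,\Delta\rho)$ of each $\tI$ move: a curl changes $w$ by the sign of its self-crossing and $\rho$ by $\pm1$ according to the turning sense of its loop. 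I expect these to realize all four sign patterns, namely $\Delta_{\oneA}=(+1,+1)$, $\Delta_{\oneB}=(+1,-1)$, $\Delta_{\oneC}=(-1,+1)$, $\Delta_{\oneD}=(-1,-1)$ (up to the sign conventions chosen for $w$ and $\rho$), so that the incompatible pairs $(\oneA,\oneD)$ and $(\oneB,\oneC)$ receive antiparallel increments. The Mirror relations give a partial check here, since a crossing switch fixes $\rho$ and negates $w$, forcing $\oneA,\oneC$ to share their $\rho$-increment and likewise $\oneB,\oneD$.

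With this dictionary in hand the obstruction is immediate. If $S\cap\R I\subseteq\{\oneA,\oneD\}$, then since $\Delta_{\oneA}=-\Delta_{\oneD}$ lies on the line spanned by $(1,1)$, the functional $\ell=w-\rho$ vanishes on both increments; combined with $\tII,\tIII$ invariance this shows that every finite sequence of moves drawn from $S$ preserves the integer $w-\rho$. But $\oneB$ has $\ell(\Delta_{\oneB})=+2\neq0$, so $\oneB$ cannot be produced from $S$, whence $S\notin GS$. The case $S\cap\R I\subseteq\{\oneB,\oneC\}$ is identical with $\ell=w+\rho$, which is preserved by $\oneB,\oneC$ (and by all $\tII,\tIII$ moves) yet changed by $\oneA$. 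Either way $S$ fails to generate some $\tI$ move, proving that any $S\in GS$ must contain a compatible pair.

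The logical core is therefore a one-dimensional linear-algebra observation, and the only genuine work is the bookkeeping of the second paragraph: confirming from the pictures that the four increments $(\Delta w,\Delta\rho)$ are exactly the four sign patterns and, crucially, that the two incompatible pairs receive antiparallel increments — this last point is what makes $w-\rho$ and $w+\rho$ available, and it is \emph{not} forced by the Mirror symmetry alone, which pins down only the $\rho$-coordinates. I expect this curl-by-curl sign computation to be the main obstacle; once it is settled, the invariance of $w$ and $\rho$ under $\tII$ and $\tIII$ together with the independent-set description of ``no compatible pair'' finish the proof mechanically.
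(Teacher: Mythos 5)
Your proposal is correct and coincides with the argument this paper relies on: the lemma is quoted from Polyak without a reproduced proof, and Polyak's own proof of Lemma~3.1 is exactly your route --- $\tII$ and $\tIII$ moves preserve both the writhe $w$ and the rotation number $\rho$, the four $\tI$ moves change $(w,\rho)$ by the four sign patterns, and the incompatible pairs $(\oneA,\oneD)$ and $(\oneB,\oneC)$ are precisely the antiparallel ones, so a set with no compatible pair fixes $w-\rho$ or $w+\rho$ and therefore cannot generate the $\tI$ moves off that line. The curl-by-curl sign bookkeeping you flag as the main obstacle is exactly the verification supplied by the figures in Polyak's paper (and consistent with the Mirror table here), so there is no gap.
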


It follows from lemma~\ref{[Pol]3.1} that any generating set contains at least four moves: two of \tI, one of \tII, and one of \tIII.

\begin{thm}[Polyak,  {\cite[Theorem~1.1]{Polyak2010}}]\label{Pol-thm1}
    The set $\{ \threeA,\twoA, \oneA, \oneB \}$ is a minimal generating set.

    \hspace{-1.5cm}
    \includegraphics[width=1.1\linewidth]{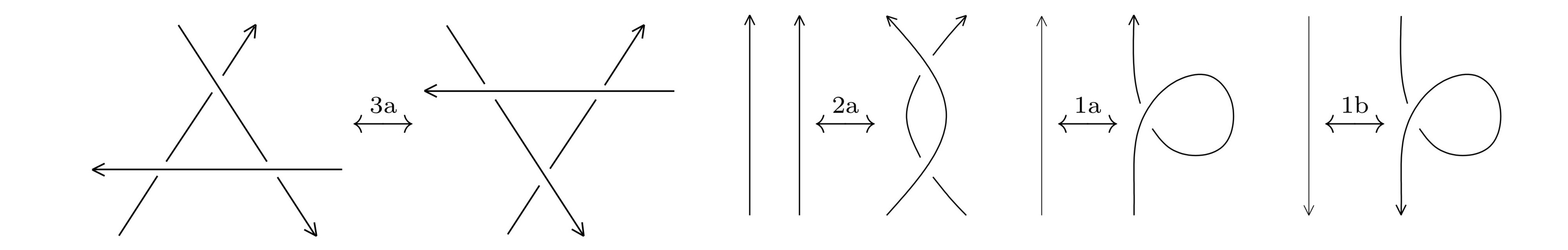}

\end{thm}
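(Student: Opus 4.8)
The plan is to split the statement into its two assertions — that $\{\threeA,\twoA,\oneA,\oneB\}$ generates all of $\R$, and that it is minimal — and to dispatch minimality first, since it follows almost formally from Lemma~\ref{[Pol]3.1}. The set has exactly four elements: one move of \tIII, one of \tII, and the compatible pair $(\oneA,\oneB)$. I would check the four deletions in turn. Removing $\threeA$ destroys the only \tIII move and removing $\twoA$ destroys the only \tII move, each violating the requirement (recorded immediately after Lemma~\ref{[Pol]3.1}) that a generating set contain a move of every type. Removing either $\oneA$ or $\oneB$ leaves a single \tI move, hence no compatible pair, so Lemma~\ref{[Pol]3.1} rules it out. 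Thus no proper subset lies in $GS$, which is exactly minimality.

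The substantive work is the generating property, which I would build up in three stages, enlarging the set of derivable moves at each step. First I would produce the remaining \tI moves $\oneC$ and $\oneD$ from $\{\oneA,\oneB,\twoA\}$: the idea is the classical ``a curl can be turned over'' argument — take a kink created by $\oneA$ (resp.\ $\oneB$), slide a nearby arc through it with $\twoA$, read off the opposite-handed kink, and cancel the auxiliary arc with a second \tII move. Second, with all of $\R I$ now in hand, I would derive the other \tII moves $\twoB$, $\twoC$, $\twoD$ from $\twoA$. The point is that $\twoA$ and $\twoB$ differ only in the over/under data while $\twoC$ and $\twoD$ differ in the relative orientation of the two strands; both discrepancies can be bridged by inserting and later removing curls via $\R I$, so as to reverse the effective orientation of a local arc before applying $\twoA$.

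Finally — and this is where the real content sits — I would derive all seven remaining \tIII moves $\threeB,\dots,\threeH$ from $\threeA$ together with the \tII and \tI moves already obtained. The uniform mechanism is a ``detour'': given a target \tIII configuration, push one of the three strands across the triple-point region by a sequence of \tII moves so that the crossing data and strand orientations locally match those of $\threeA$, apply $\threeA$, and then retract the strand by the inverse \tII moves; whenever an orientation must be reversed I would first attach a curl via $\R I$. The main obstacle is precisely the bookkeeping in this last stage: each of the seven moves carries its own assignment of three orientations and three over/under choices, so one must exhibit a concrete admissible sequence for each case — as the accompanying figure does — and verify that every intermediate diagram is legitimate, i.e.\ that the auxiliary arcs introduced by the \tII and \tI moves can always be routed in and later removed without obstruction. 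Once $\R I$, $\R\II$, and $\R\III$ are all shown to be derivable, we conclude $\{\threeA,\twoA,\oneA,\oneB\}\in GS$, completing the proof.
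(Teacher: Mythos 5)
Your minimality half is fine and matches the paper: by Lemma~\ref{[Pol]3.1} and the remark following it, every generating set needs a \tIII~move, a \tII~move, and a compatible pair of \tI~moves, so deleting any element of $\{\threeA,\twoA,\oneA,\oneB\}$ leaves a non-generating set. The generation half, however, has its dependency structure backwards, and your stages 1 and 2 are not just unverified --- they are provably impossible, by exactly the smoothing invariants this paper uses in the proof of Proposition~\ref{braid&2cd}. For stage 1: a derivation of a \tI~move must take place inside its changing disk (cf.\ Remark~\ref{rem:localGS}), which contains a single strand, so your ``nearby arc'' is necessarily a fold of that same strand; more decisively, the quantity $C^+ + C^- - w$ is unchanged by $\oneA$, $\oneB$, and $\twoA$ (items (1)--(3) in Proposition~\ref{braid&2cd}), while $\oneC$ and $\oneD$ change it by $\pm 2$, so $\oneC \not\prec \{\oneA,\oneB,\twoA\}$ and $\oneD \not\prec \{\oneA,\oneB,\twoA\}$. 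For stage 2: every \tI~move changes $C^+ + C^- + w$ by an even amount (one crossing and one smoothed circle), and $\twoA$, $\twoB$ change neither the smoothed diagram nor $w$, whereas $\twoC$ and $\twoD$ flip the parity of $C^+ + C^- + w$ (two crossings of cancelling signs plus one new circle). Hence $\twoC$ and $\twoD$ can never be obtained from $\twoA$ together with curls, no matter how the curls are routed; a non-braid \tIII~move is indispensable (note $\threeA$ itself flips this parity, which is why the true derivations work). Had your stage 2 been correct, it would also make $\{\threeB,\twoA\}\cup CP$ generating, contradicting Proposition~\ref{braid&2cd} and the ``only if'' direction of Theorem~\ref{thm:PolyakBraid}.

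The paper proves this statement by citing Polyak and exhibiting the explicit movies (the figure accompanying the theorem), and its own lemmas show the correct order of derivation, which is essentially opposite to yours: first $\twoC \prec \{\oneA,\twoA,\threeA\}$ and $\twoD \prec \{\oneB,\twoA,\threeA\}$ (Lemma~\ref{Generating2}, where the \tIII~move does the essential work your curls cannot); only then $\oneD \prec \{\oneA,\twoC\}$ and $\oneC \prec \{\oneB,\twoD\}$ (Lemma~\ref{Generating1} --- note that \emph{every} derivation listed there uses $\twoC$ or $\twoD$, precisely because the strand adjacent to its own kink is antiparallel, so the braid-type $\twoA$ cannot perform the curl turnover); then a braid-type \tIII~move $\threeC \prec \{\twoC,\twoD,\threeA\}$ (Lemma~\ref{Generating3}); then $\twoB \prec \{\oneD,\twoC,\threeC\}$; and finally all of $\R\III$ from $\{\threeA,\twoA,\twoB,\twoC,\twoD\}$ (Proposition~\ref{3ast-all2}). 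Your stage 3 ``detour'' is a reasonable description of this last step, but as written it rests on the failed stage 2, so the proposal as a whole does not go through.
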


The set considered in Theorem~\ref{Pol-thm1} is coherent $\textup{\III}$ but not coherent $\text{\II}$.

\begin{thm}[Polyak,  {\cite[Theorem~1.2]{Polyak2010}}]\label{thm:PolyakBraid}
    We only consider sets $S$ whose $\tI$ moves constitute a compatible pair. 
    A set $S$ containing only one move,  $\threeB$, of \tIII $\!$ is a generating set if and only if $S$ includes $\twoC$ and $\twoD$, and contains one of the compatible pairs.
\end{thm}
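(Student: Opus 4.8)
The statement is a biconditional, and I would separate it into the reverse implication (sufficiency of $\{\threeB,\twoC,\twoD\}$ together with a compatible pair) and the forward implication (necessity of $\twoC$ and $\twoD$). Since the standing hypothesis is that the \tI~moves of $S$ already form a compatible pair, the ``contains one of the compatible pairs'' clause is not something I would reprove: it is exactly the content of Lemma~\ref{[Pol]3.1}. So the real work splits into a constructive half and an obstruction half, and the effective assertion is that, under the standing hypothesis, $S$ is generating if and only if $\twoC,\twoD\in S$.

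\textbf{Sufficiency.} Assume $S\supseteq\{\threeB,\twoC,\twoD\}$ with \tI~moves forming a compatible pair. I would generate $\R$ in two stages. First, from $\{\threeB,\twoC,\twoD\}$ and the compatible pair I would recover all of $\R I$ and all of $\R\II$: the two missing \tI~moves are obtained by trading a kink of one handedness for the other through a locally applied \tII~move (the standard framing-flip relation), and the braid-type moves $\twoA,\twoB$ are produced by introducing a cancelling kink pair and reorganizing the resulting strands with $\threeB$; each of these is a bounded, explicit sequence of diagrams. Second, once every \tII~move and one \tIII~move are available, I would invoke the classical reduction that a single \tIII~move together with all \tII~moves yields every \tIII~move (cf.~\cite{Ostlund2001,Manturov2004}), producing $\threeA,\threeH$ and $\threeC,\ldots,\threeG$ and thus all of $\R\III$. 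Hence $S$ generates $\R$.

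\textbf{Necessity.} Suppose $S$ is a generating set with $S\cap\R\III=\{\threeB\}$ and \tI~moves forming a compatible pair, and suppose for contradiction that $\twoC\notin S$. Then every move of $S$ lies in $\{\threeB,\twoA,\twoB,\twoD\}\cup\R I$. I would build an invariant $\Phi$ of oriented diagrams (relative to fixed boundary data) that is unchanged by every move on this list but is changed by $\twoC$; such a $\Phi$ shows $\twoC$ cannot be realized by any sequence of moves of $S$, contradicting that $S$ generates. For $\Phi$ I would use a Gauss-diagram (finite-type) counting functional: record the oriented Gauss diagram of a diagram $D$ and let $\Phi(D)$ be the signed number of pairs of arrows forming a fixed anti-parallel, linked two-arrow pattern with prescribed signs, chosen to be precisely the pattern created by $\twoC$ and not by $\twoD$. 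The decisive point is that $\threeB,\twoA,\twoB$ are braid-type: the arrows they introduce or rearrange are coherently oriented, so they never create, destroy, or re-pair an anti-parallel configuration of the chosen type; \tI~moves introduce only an isolated kink-arrow that does not enter the counted pattern; and $\twoD$ creates the opposite anti-parallel pattern, which $\Phi$ is designed not to see. Thus $\Phi$ is invariant under all of $S$ yet detects $\twoC$. An entirely analogous invariant, built from the opposite two-arrow pattern, forces $\twoD\in S$.

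\textbf{Main obstacle.} The constructive half is bounded bookkeeping once the reduction steps are set up, so I expect no difficulty there beyond drawing the diagrams. The crux is the necessity half, and within it the delicate requirement is invariance under $\threeB$ while retaining sensitivity to $\twoC$: a \tIII~move relates three crossings simultaneously and is by far the most constraining move to be invariant under. The reason the construction can succeed is exactly that $\threeB$ is braid-type, so it only permutes coherently oriented crossings and cannot alter a count that depends on an anti-parallel arrow pairing; making this ``braid-type moves preserve $\Phi$'' claim precise---verifying that the chosen two-arrow functional is genuinely insensitive both to the triple-crossing rearrangement of $\threeB$ and to the competing move $\twoD$---is the step I would expect to absorb most of the effort.
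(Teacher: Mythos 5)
Your sufficiency half is fine and is essentially the route this paper takes when it reproves and extends this statement as Theorem~\ref{Extend-of-Polyak}(1) (the theorem itself is quoted from Polyak): recover $\R I$ from the compatible pair plus $\{\twoC,\twoD\}$ (Proposition~\ref{RI-gene}), obtain further \tIII~moves and then $\twoA,\twoB$ via kink tricks (Lemmas~\ref{Generating3} and~\ref{Generating2}), and finish with the ``one \tIII~move plus all \tII~moves generates $\R\III$'' reduction (Proposition~\ref{3ast-all2}). The necessity half, however, has a genuine gap. You propose a \emph{commutative} Gauss-diagram functional $\Phi$: a signed count of linked two-arrow patterns ``created by $\twoC$ and not by $\twoD$.'' No such pattern exists. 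Both $\twoC$ and $\twoD$ insert a nested (unlinked) pair of anti-parallel arrows with opposite signs, with the \emph{same} arrow directions and the \emph{same} over/under data relative to the two strands; the only difference between them is the order of the two new crossings along the strands (equivalently, a planar winding datum: which way the created bigon rotates --- note $\twoC$ and $\twoD$ are each their own mirror). Since the two new arrows link every pre-existing arrow identically, the change of any pair-count under $\twoC$ equals its change under $\twoD$: such a $\Phi$ is invariant under both or sensitive to both, so it cannot separate them. Your auxiliary claim that braid-type moves ``never create or re-pair an anti-parallel configuration'' is also false as stated --- a $\twoA$/$\twoB$ move creates arrows that do form anti-parallel pairs with pre-existing arrows; invariance there comes from sign cancellation between the two new arrows, and that same cancellation mechanism applies verbatim to $\twoC$ and $\twoD$, destroying the asymmetry you need.

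The paper circumvents exactly this obstruction by using invariants that see data invisible to commutative counts. To show $\twoC$ alone (with all of $\R I$ and $\R\III$) cannot replace $\twoD$, it uses a \emph{noncommutative} invariant (Propositions~\ref{2cprop},~\ref{2dprop}): for a based, ordered two-component diagram, $f(D_L)$ is the \emph{word} in the free monoid on $\{a_+,b_-\}$ read along the first component from crossings where it passes over the second. This word is unchanged by $\R I$ and $\R\III$ (Lemma~\ref{lem:GDF2c}), while $\twoC$ can only insert or delete the block $b_-a_+$ and $\twoD$ the opposite block --- the order of letters, precisely what your count discards, is what distinguishes the two moves --- and a word-combinatorics argument on the two-crossing trivial link shows $a_+b_-$ can never be emptied. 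Separately, to rule out the case where neither $\twoC$ nor $\twoD$ is present, the paper uses a \emph{planar} invariant (Proposition~\ref{braid&2cd}): braid-type moves preserve the oriented smoothing, and suitable combinations of the counts $C^+$, $C^-$ of counterclockwise/clockwise circles with the writhe (chosen per compatible pair) are preserved by every allowed move but changed by $\twoC$ or $\twoD$. So the repair for your necessity step is not a cleverer choice of two-arrow pattern but a change of target: either an order-sensitive (word-valued) invariant or a rotation-number-sensitive one. A further point to check if you pursue your own route: in your setting $S$ may contain $\twoA,\twoB$, and the paper's word invariant $f$ is \emph{not} invariant under those moves (they also insert letter pairs), which is why the paper's Theorem~\ref{prop:cdAlone} carries the hypothesis that $\twoA,\twoB\notin S$ and why the two invariants are deployed in tandem.
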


The set considered in Theorem~\ref{thm:PolyakBraid} is coherent $\text{\II}$ but not coherent $\text{\III}$.

We prove a theorem that extends the assertion of Theorem~\ref{thm:PolyakBraid} for the move 3b to the other braid-type {\textup{\III}} moves; see Theorem~\ref{Extend-of-Polyak}.
\section{Main results}\label{sec:MainR}
Let $CP$ be any compatible pair, i.e, $(\oneA, \oneB),(\oneA, \oneC), (\oneB, \oneD)$ or $(\oneC, \oneD)$. 
Let $\R$ be the set of the all oriented Reidemeister moves, $\R I$ the set of the all moves of \tI,  $\R \II$ the set of the all moves of \tII , and $\R \III$ the set of the all moves of \tIII.  

\begin{prop}\label{RI-gene}
    The set 
    $ \{ \twoC, \twoD \} \cup CP $ 
    generates all elements of $\R I$.
\end{prop}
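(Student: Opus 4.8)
The plan is to first isolate the numerical content and then supply the two missing moves by explicit diagrams. I would classify the four $\tI$ moves by the pair of signs $(\mathrm{w},\mathrm{t})\in\{+,-\}^2$, where $\mathrm{w}$ is the sign of the self-crossing and $\mathrm{t}$ records the turning sense of the loop, with the conventions $\oneA=(+,+)$, $\oneB=(+,-)$, $\oneC=(-,+)$, $\oneD=(-,-)$. Mirror switches every crossing, hence flips $\mathrm{w}$ and fixes $\mathrm{t}$, matching the listed pairs $\oneA\leftrightarrow\oneC$ and $\oneB\leftrightarrow\oneD$. Since $\twoC$ and $\twoD$ leave both the writhe and the turning number unchanged, any sequence built from $\twoC$, $\twoD$ and the two moves of $CP$ changes $(\mathrm{w},\mathrm{t})$ only through the $CP$-moves; the reachable changes therefore form the $\mathbb{Z}$-span of the two sign-vectors of $CP$. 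For each compatible pair these vectors differ in exactly one coordinate, so their span is the index-two sublattice $\{(a,b):a+b\ \text{even}\}$, which contains all four vectors $(\pm,\pm)$; for the two antipodal pairs $(\oneA,\oneD)$ and $(\oneB,\oneC)$ the vectors are parallel and span only a rank-one lattice, re-deriving Lemma~\ref{[Pol]3.1}. Thus the numerical obstruction vanishes precisely for compatible pairs.

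The remaining task is to realise the two $\tI$ moves outside $CP$ by genuine diagram sequences. I would cut the work down with Mirror, which fixes $\twoC,\twoD$ and exchanges $\mathrm{w}$: it sends $(\oneA,\oneB)\leftrightarrow(\oneC,\oneD)$ and fixes each of $(\oneA,\oneC),(\oneB,\oneD)$ setwise. Together with the top--bottom reflection (which reverses both $\mathrm{w}$ and $\mathrm{t}$, hence identifies $(\oneA,\oneC)$ with $(\oneB,\oneD)$ while again preserving the set $\{\twoC,\twoD\}$), this leaves only two representatives, say $(\oneA,\oneB)$ and $(\oneA,\oneC)$. For each I would exhibit a short explicit sequence producing the two missing moves, for instance obtaining $\oneC$ and $\oneD$ from $\{\oneA,\oneB,\twoC,\twoD\}$.

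The engine of each construction is forced by the first paragraph: because $\twoC,\twoD$ cannot alter $(\mathrm{w},\mathrm{t})$, the turning and writhe of the target must be installed by the $CP$-moves, while the $\tII$ moves only reorganise crossings. A single cancellation does not suffice, since the auxiliary kink whose crossing has the sign needed to cancel is the \emph{antipodal} partner of the target, and two antipodal kinks meet along a \emph{parallel} bigon---removable only by a braid-type $\tII$ move, which is not available. I would therefore interleave the $CP$-moves with both $\twoC$ and $\twoD$ so that every transient bigon is of coherent (antiparallel) type, arranging the surplus crossings to cancel and the correct loop to reappear. I expect the main obstacle to be exactly this over/under bookkeeping: matching the two crossings of each transient bigon to $\twoC$ or $\twoD$, and completing the conversion without ever sliding a strand past a third crossing, which would require a $\tIII$ move that is absent here. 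This interplay---coherent bigons supplied by $\twoC,\twoD$ against turning supplied by $CP$---is where I anticipate all the real difficulty, and it is why both $\twoC$ and $\twoD$ are needed rather than a single $\tII$ move.
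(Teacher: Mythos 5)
Your first two paragraphs only establish the easy half of the work: the $(\mathrm{w},\mathrm{t})$-count shows that the writhe/turning invariants pose no obstruction, and the Mirror and reflection symmetries reduce the number of cases to check. Neither of these yields generation. The proposition is a constructive existence claim, and essentially its entire content is the explicit move sequences inside the changing disk, which you promise (``I would exhibit a short explicit sequence'') but never produce. The paper's proof consists precisely of these sequences, packaged as Lemma~\ref{Generating1} (Figure~\ref{movie1}): $\oneA \prec \{\oneD,\twoD\}$, $\oneB \prec \{\oneC,\twoC\}$, $\oneC \prec \{\oneB,\twoD\}$, $\oneD \prec \{\oneA,\twoC\}$, from which Proposition~\ref{RI-gene} follows for every compatible pair by two applications. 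Without your own versions of these sequences, the proof attempt is incomplete at exactly the point where the mathematics lives.

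Worse, the heuristic you use to plan the missing constructions is wrong, so following your outline would steer you away from the actual (short) proof. You assert that ``a single cancellation does not suffice'' because the auxiliary kink is the \emph{antipodal} partner of the target and antipodal kinks can only meet along a parallel bigon, forcing you to ``interleave the $CP$-moves with both $\twoC$ and $\twoD$.'' But each derivation in Lemma~\ref{Generating1} produces a $\tI$ move from precisely its antipodal partner together with a \emph{single} coherent $\tII$ move (e.g.\ $\oneA$ from $\oneD$ and $\twoD$ alone), so no parallel bigon and no braid-type move is ever needed; the obstruction you anticipate simply does not arise. Consequently your closing claim --- that this difficulty ``is why both $\twoC$ and $\twoD$ are needed rather than a single $\tII$ move'' --- is false as stated: for $CP=(\oneA,\oneC)$ the derivations $\oneB \prec \{\oneC,\twoC\}$ and $\oneD \prec \{\oneA,\twoC\}$ use $\twoC$ only, so $\{\oneA,\oneC,\twoC\}$ already generates all of $\R I$, and symmetrically $\{\oneB,\oneD,\twoD\}$ does so using $\twoD$ only; both coherent moves are genuinely used only for $CP=(\oneA,\oneB)$ or $(\oneC,\oneD)$. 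To repair the proposal, replace the speculative third paragraph with explicit diagram sequences realizing the four relations of Lemma~\ref{Generating1} (your symmetry reductions would then legitimately cut the number of pictures you must draw).
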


\begin{prop}\label{3ast-two2}
    For any   $ \ast \in \{ \textup{c, d, e, f} \} $,
    the set   
    $ \{ \threeAS \} \cup \{ \twoC, \twoD \} \cup CP$
    generates  all elements of $\R \II$.
    For any   $ \ast \in \{ \textup{a, h} \} $,
    the set   
    $ \{ \threeAS \} \cup \{ \twoA, \twoB \} \cup CP$ 
    generates  all elements of $\R \II$.
\end{prop}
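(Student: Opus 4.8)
The plan is to prove both assertions by the same strategy, since they reduce to a common task. In each case the set already contains two of the four \tII moves, so it is enough to derive the remaining two, namely the \tII moves sharing the braid-type (resp.\ non-braid-type) character of $\threeAS$. Thus for $\ast\in\{\textup{c,d,e,f}\}$ I must obtain $\twoA$ and $\twoB$ from $\{\threeAS,\twoC,\twoD\}\cup CP$, and for $\ast\in\{\textup{a,h}\}$ I must obtain $\twoC$ and $\twoD$ from $\{\threeAS,\twoA,\twoB\}\cup CP$. Because the ingredient \tII moves are of the opposite character to the target \tII moves, the $\threeAS$ move is precisely what is needed to bridge the two, while the compatible pair supplies the orientation flexibility.

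The mechanism I would use is a kink-insertion argument. Starting from one side of the target \tII move (two strands forming a bigon), I first apply a \tI move from $CP$ to grow a small kink on one strand next to the bigon. The loop of this kink carries a local maximum and minimum, so along it the strand runs opposite to its global orientation; this manufactures, out of a parallel pair, a locally anti-parallel sub-arc (and conversely in the non-braid case). The available helper \tII move ($\twoC$ or $\twoD$, resp.\ $\twoA$ or $\twoB$) now engages this sub-arc, after which a single application of $\threeAS$ transports the resulting crossings past one another; here the three strands required by $\threeAS$ are the partner strand together with the two arcs of the loop. A companion \tI move of the same $CP$ then removes the kink, and reading off the two ends of the sequence shows the bigon has disappeared, i.e.\ the target \tII move has been realized inside its changing disk.

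To keep the number of explicit derivations small I would exploit the Mirror corollary: mirroring a derivation for $\threeC$ (resp.\ $\threeD$, $\threeA$) yields one for $\threeE$ (resp.\ $\threeF$, $\threeH$), while $\twoC,\twoD$ are self-mirror, each $CP$ maps to a compatible pair, and the targets $\twoA,\twoB$ are interchanged. It therefore suffices to write down the sequences for the representatives $\threeC,\threeD$ and $\threeA$ and for one compatible pair; the other compatible pairs follow from the same template by relabeling which strand carries the kink, using Proposition~\ref{RI-gene} in the braid case to trade \tI moves (there $\{\twoC,\twoD\}\cup CP$ makes every \tI move available), and the remaining \tIII moves follow by mirroring. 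The braid case runs parallel to Polyak's derivation of the \tII moves from $\threeB$ in Theorem~\ref{thm:PolyakBraid}, now carried out for $\threeC,\threeD,\threeE,\threeF$; for the non-braid representatives one may instead lean on Theorem~\ref{Pol-thm1}, adjusting only for a general $CP$, since there $CP$ is the sole source of \tI moves.

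The step I expect to be the main obstacle is the bookkeeping that makes the sequence close up exactly. For each admissible over/under pattern of $\threeAS$ and each choice of $CP$ one must verify that the kink created by the first \tI move has the precise handedness and sign needed for the helper \tII move to meet the correct strand as over- or under-crossing, and that the crossing left after the $\threeAS$ move is again a kink cancellable by the second \tI move of that same $CP$, rather than a stray crossing. Matching the over/under data and the orientation of the returning loop-arc simultaneously is exactly what determines which compatible pair cooperates with which $\threeAS$, and it is the part that genuinely requires the explicit diagrams rather than a formal argument.
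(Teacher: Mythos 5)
Your mechanism is, in essence, the paper's own proof: Proposition~\ref{3ast-two2} is deduced there from Lemma~\ref{Generating2}, whose sixteen relations all have the shape (missing \tII~move) $\prec$ \{one \tI~move, one \tII~move of the opposite character, the given \tIII~move\}, each realized by exactly the kink sequence you describe, with the Mirror correspondence pairing the derivations off and with Proposition~\ref{RI-gene} supplying every \tI~move in the braid case, just as you propose. However, one step of your plan is wrong as stated: you ask that the kink be cancelled ``by the second \tI~move of that same $CP$.'' A kink has a fixed writhe and winding, and no move of \tIII~alters the crossing of the kink itself, so the only move that can remove it is the \emph{same} \tI~move, applied in reverse; the two members of a compatible pair differ in sign or in winding and can never cancel each other's kinks. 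This is visible in Lemma~\ref{Generating2}: each derivation set contains a single \tI~move (e.g.\ $\twoA \prec \{\oneA, \twoD, \threeD\}$), used once forward and once backward. With your closing step the movie never closes up; the fix is trivial, but it changes the bookkeeping you flag as the main obstacle --- what must be checked is which \emph{single} \tI~move matches the given $\threeAS$, not a pairing of two members of $CP$.

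The second, more substantive issue is your treatment of the non-braid case, where you propose to ``lean on Theorem~\ref{Pol-thm1}, adjusting only for a general $CP$.'' With only $\{\threeAS, \twoA, \twoB\} \cup CP$ in hand, $CP$ is the sole source of \tI~moves (Lemma~\ref{Generating1} needs $\twoC$ or $\twoD$, not yet available), and the relevant derivations for $\threeA$ require precisely $\oneA$ for $\twoC$ and $\oneB$ for $\twoD$ (mirrored for $\threeH$). For $CP = (\oneA, \oneC)$ or $(\oneB, \oneD)$ one must therefore bootstrap: first derive the one reachable move among $\twoC, \twoD$, then invoke Lemma~\ref{Generating1} to manufacture the missing \tI~move, and only then derive the other --- a two-stage argument your template does not account for. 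And for $CP = (\oneC, \oneD)$ with $\threeA$ (equivalently $(\oneA, \oneB)$ with $\threeH$) the process cannot even start: no kink of the required type can be created, and these are exactly the ``?'' entries of Table~\ref{table:classificationAH} and the Open Problem of the paper (cf.\ Remark~\ref{remhomotopy}). So the second half of the proposition can only be established for three of the four compatible pairs, and your promise to handle ``a general $CP$'' by relabeling would stall precisely at the pair the paper itself leaves unresolved.
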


\begin{prop}\label{3ast-all2}
    For every $\threeAS$, 
    the set   $ \{ \threeAS \} \cup \{ \twoA,\twoB, \twoC, \twoD \} $ 
    generates all elements of $\R \III$.
\end{prop}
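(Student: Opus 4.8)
The plan is to derive every type III move from the single available move $\threeAS$ by the standard \emph{R2-sandwich}: one introduces a cancelling pair of crossings with a type II move, applies $\threeAS$ to reposition a strand across a crossing, and then removes a cancelling pair with a second type II move; the composite local operation is again a type III move, but of a different kind. Since $\threeAS$ is already one element of $\R\III$, it remains to reach the other seven moves of $\tIII$ this way.

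First I would fix the combinatorial data distinguishing the eight moves of $\tIII$: each is determined, inside the changing disk, by the over/under pattern at its three crossings together with the orientations of the three strands. I would then regard the eight moves as vertices of a graph and join two of them by an edge whenever one is obtained from the other by a single R2-sandwich of the above form. The key local claim is that such a sandwich flips exactly one piece of over/under data -- equivalently, changes which strand lies topmost as it crosses the other two -- while leaving all three orientations untouched.

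The heart of the argument is this elementary transformation. Starting from the ``before'' picture of a target move, I push one of the three strands with a type II move to create an auxiliary bigon; this alters the local over/under so that the three-strand picture now contains, up to ambient isotopy inside the changing disk, exactly the domain on which $\threeAS$ acts. Applying $\threeAS$ slides the strand across, and a final type II move deletes the auxiliary bigon, leaving the target move. All four moves $\twoA,\twoB,\twoC,\twoD$ are genuinely needed here: the two strands bounding the auxiliary bigon may run in the same direction or in opposite directions, and only one of the four type II moves has the correct oriented form to create (and later cancel) the bigon in each case. This is exactly why the statement requires the full set $\{\twoA,\twoB,\twoC,\twoD\}$ rather than a compatible choice of two.

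Finally I would prove that the graph is connected, so that repeated elementary transformations carry $\threeAS$ to all eight vertices. To control the casework I would exploit the symmetries of the problem. The set $\{\twoA,\twoB,\twoC,\twoD\}$ is invariant under Mirror, which interchanges $\twoA \leftrightarrow \twoB$ while fixing $\twoC,\twoD$ and pairs the eight generators as $(\threeA,\threeH),(\threeB,\threeG),(\threeC,\threeE),(\threeD,\threeF)$; hence it suffices to treat one generator from each pair, after which Mirror supplies the rest. The rigid rotations relating the different $\tIII$ move-types identify several of the remaining derivations, cutting the casework down to a few representatives. The step I expect to be the main obstacle is precisely the bookkeeping inside the elementary transformation: one must check that the auxiliary type II move really returns a picture isotopic to the domain of $\threeAS$ with the correct over/under and orientation at every one of the three crossings, and then verify that the resulting edges connect the whole graph rather than splitting it into components, so that no move of $\tIII$ is left unreachable.
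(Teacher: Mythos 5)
Your overall architecture coincides with the paper's: the paper proves this proposition via Lemma~\ref{Generating3}, whose relations $\threeAS \prec \{2\ast, 2\sharp, 3\star\}$ are exactly your ``R2-sandwiches'' (each verified there by an explicit diagram movie, Figure~\ref{movie3}), followed by the observation that the resulting transition graph on the eight \tIII~moves is connected once all of $\twoA, \twoB, \twoC, \twoD$ are available. But your key local claim --- that a sandwich ``flips exactly one piece of over/under data while leaving all three orientations untouched'' --- is false, and your argument would break exactly at the connectivity step. The eight oriented \tIII~moves are not distinguished by over/under data alone: precisely two of them, $\threeA$ and $\threeH$, have their three strands cyclically oriented (this is what makes them non-braid-type), and braid-typeness depends only on the orientation pattern at the boundary of the changing disk, which over/under flips cannot alter. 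An elementary step that preserved the orientation data of the move-type could therefore never join $\{\threeA, \threeH\}$ to the six braid-type moves; under your model the graph splits into at least two components, and the proposition fails for every $\threeAS$. Compare the paper's own list: $\threeB \prec \{\twoC, \twoD, \threeA\}$ and $\threeA \prec \{\twoC, \twoD, \threeB\}$ cross the braid/non-braid divide, whereas no relation of the form $\threeA \prec \{\cdot,\cdot,\threeH\}$ (essentially the only edge out of $\threeA$ your invariant would permit) even appears --- and even if it did, the six braid-type moves would remain unreachable.

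The actual mechanism, visible in the paper's movies, is that the auxiliary bigon created by $\twoC$ or $\twoD$ folds a strand back, so one of the three segments entering the auxiliary triangle is traversed in the reversed direction; the simulated move thus has a \emph{different orientation pattern}, not merely a different height pattern. This is precisely why, in the paper's transition diagram, passing between the non-braid moves $\threeA, \threeH$ and the braid-type moves requires the antiparallel moves $\twoC, \twoD$, while $\twoA, \twoB$ suffice among braid types --- a distinction your bookkeeping cannot express, even though your (correct) remark that all four \tII~moves are needed implicitly relies on it. Your Mirror pairs agree with the paper's and that symmetry reduction is sound, but the diagrammatic verifications you defer as ``the main obstacle'' are the entire content of Lemma~\ref{Generating3}, and since your stated invariant contradicts the edges that must exist, the proposal has a genuine gap rather than being a variant proof.
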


    Propositions~\ref{RI-gene},\ref{3ast-two2},\ref{3ast-all2} imply  Corollary~\ref{Cor:3*Gen}. 
\begin{cor}\label{Cor:3*Gen}
The following two statements hold.  
    \begin{itemize}
        \item  
        For any $ \ast \in \{ \textup{c, d, e, f} \} $, 
        the set
        $ \{ \threeAS \} \cup \{ \twoC, \twoD \} \cup CP$
        is a  generating set.
        \item   
        For any $ \ast \in \{ \textup{a, h} \} $,
        the set   
        $ \{ \threeAS \} \cup \{ \twoA, \twoB \} \cup CP$ 
        is also a generating set.
    \end{itemize}
\end{cor}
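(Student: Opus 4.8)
The plan is to prove Corollary~\ref{Cor:3*Gen} as a purely formal consequence of Propositions~\ref{RI-gene}, \ref{3ast-two2}, and \ref{3ast-all2}, combined with the transitivity of generation: if a set $S$ generates a collection $T$ of moves, then any move obtainable from $T$ (together with isotopies inside changing disks) is also obtainable from $S$, since one may realize each move of $T$ by a finite $S$-sequence and substitute those realizations. Consequently, to show that a given $S$ is a generating set it suffices to generate each of $\R I$, $\R \II$, $\R \III$ in turn, where at each stage we may use not only the literal members of $S$ but also any moves already shown to be generated by $S$.

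For the first bullet, fix $\ast \in \{\textup{c, d, e, f}\}$ and set $S = \{\threeAS\} \cup \{\twoC, \twoD\} \cup CP$. First I would note that $\{\twoC, \twoD\} \cup CP \subseteq S$, so Proposition~\ref{RI-gene} gives immediately that $S$ generates all of $\R I$. Next, $S$ is exactly the set appearing in the first clause of Proposition~\ref{3ast-two2}, so $S$ generates all of $\R \II$; in particular $S$ generates the braid-type moves $\twoA$ and $\twoB$. Finally, although Proposition~\ref{3ast-all2} requires the full set $\{\threeAS\} \cup \{\twoA, \twoB, \twoC, \twoD\}$, every one of these moves is now generated by $S$ (the four $\tII$ moves by the previous step, and $\threeAS \in S$ directly), so transitivity yields that $S$ generates all of $\R \III$. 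Hence $S$ generates $\R = \R I \cup \R \II \cup \R \III$ and is a generating set.

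For the second bullet, fix $\ast \in \{\textup{a, h}\}$ and set $S = \{\threeAS\} \cup \{\twoA, \twoB\} \cup CP$. Here I would start from $\R \II$: the set $S$ is precisely the set in the second clause of Proposition~\ref{3ast-two2}, so $S$ generates all of $\R \II$, and in particular the non-braid moves $\twoC$ and $\twoD$. Since $CP \subseteq S$ as well, $S$ generates $\{\twoC, \twoD\} \cup CP$, whence Proposition~\ref{RI-gene} together with transitivity gives that $S$ generates $\R I$. For $\R \III$ one again invokes Proposition~\ref{3ast-all2}: the moves $\threeAS, \twoA, \twoB$ lie in $S$ and $\twoC, \twoD$ are generated by the $\R \II$ step, so all of $\{\threeAS\} \cup \{\twoA, \twoB, \twoC, \twoD\}$ is generated, and therefore so is $\R \III$.

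The only delicate point, and the step I expect to require the most care in the writeup, is the transitivity of generation invoked twice above: one must confirm that the notion of generating set is closed under composition, i.e.\ that realizing an auxiliary move by a finite $S$-sequence and then feeding it into another proposition produces a legitimate $S$-sequence inside the relevant changing disk. This follows directly from the definition, since each move of the intermediate collection is replaced by a finite $S$-sequence supported in its own changing disk, but it is exactly what legitimizes the bootstrapping (generating $\twoA, \twoB$, respectively $\twoC, \twoD$, before supplying them to Proposition~\ref{3ast-all2}). There is no circularity, because in each bullet the $\R \III$ step, and in the second bullet also the $\R I$ step, depends only on the already-completed $\R \II$ step and never the reverse.
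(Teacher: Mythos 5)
Your proof is correct and takes essentially the same approach as the paper, which derives Corollary~\ref{Cor:3*Gen} directly from Propositions~\ref{RI-gene}, \ref{3ast-two2}, and \ref{3ast-all2} in exactly this order. Your explicit bootstrapping via transitivity of generation (producing $\twoA,\twoB$, respectively $\twoC,\twoD$, before invoking Proposition~\ref{3ast-all2}) simply spells out the step the paper leaves implicit in its one-line assertion that the three propositions imply the corollary.
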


\begin{thm}\label{prop:cdAlone}
    Any minimal generating set containing  $\twoC$ must also contain $\twoD$, and vice versa.  
    If a generating set contains $\twoC$ but neither $\twoA$ nor $\twoB$, then it must also contain $\twoD$\, and vice versa.
\end{thm}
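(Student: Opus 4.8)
The plan is to isolate a single obstruction and then use the symmetries together with the already-established generating results to deduce all four implications at once. First I would reduce the whole theorem to one core statement: the move $\twoD$ cannot be realized by any finite sequence of moves drawn from $\R I \cup \R\III \cup \{\twoC\}$. Granting this, the second sentence (for a generating set $S$ with $\twoC \in S$ and $\twoA,\twoB \notin S$) is immediate: if $\twoD \notin S$ as well, then every move of $S$ lies in $\R I \cup \R\III \cup \{\twoC\}$, so $S$ could never produce $\twoD$, contradicting $S \in GS$ (since $\twoD \in \R$ must be generated). For the ``vice versa'' I would use the involution $r$ reversing the orientation of every strand: since $r$ carries $\R I$ to $\R I$ and $\R\III$ to $\R\III$ (merely permuting moves within each type) while interchanging $\twoC$ and $\twoD$, applying $r$ to the core statement gives $\twoC \notin \langle \R I \cup \R\III \cup \{\twoD\}\rangle$, which is exactly the reverse implication.

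The heart of the argument is therefore an obstruction $\phi$, a $\mathbb{Z}$- or $\mathbb{Z}/2$-valued functional on tangle diagrams inside the changing disk, with the four properties: (i) $\phi$ is unchanged by every move of $\tI$; (ii) $\phi$ is unchanged by every move of $\tIII$; (iii) $\phi$ is unchanged by $\twoC$; and (iv) $\phi$ is changed by $\twoD$. I would build $\phi$ as a weighted crossing count $\phi(D) = \sum_c w(c)$, where $w(c)$ depends on the sign $\varepsilon(c)$ together with a local orientation datum at $c$ that is preserved by every $\tIII$ move; this makes (ii) automatic, because a $\tIII$ move preserves the signs and local types of its three crossings. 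The weight is tuned so that the crossing of any $\tI$ kink contributes $0$, giving (i), and so that the two crossings created by $\twoC$, which carry opposite signs and the \emph{same} local datum, contribute canceling weights, whereas the two crossings created by $\twoD$ carry opposite signs but local data arranged so that the weights \emph{add}. This asymmetry in how the two corners of the created bigon combine is precisely the feature distinguishing the two coherent $\tII$ moves. I note in advance that $\phi$ cannot also be invariant under both $\twoA$ and $\twoB$: by Corollary~\ref{Cor:3*Gen} the set $\{\threeA\}\cup\{\twoA,\twoB\}\cup CP$ already generates $\twoD$, so any functional with (i)--(iv) must be sensitive to at least one braid-type $\tII$ move, which is consistent with the hypothesis $\twoA,\twoB \notin S$ in the second sentence.

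For the first sentence I would deduce the minimal-set statement from the core statement together with the known generating sets. Let $S$ be minimal with $\twoC \in S$ and suppose $\twoD \notin S$. If $\twoA,\twoB \notin S$, the second sentence already forces $\twoD \in S$, a contradiction. Otherwise $S$ contains $\twoA$ or $\twoB$, and I split on the $\tIII$-content of $S$. If $S$ contains a non-braid-type $\tIII$ move ($\threeA$ or $\threeH$), then a suitable four-element subset of $S\setminus\{\twoC\}$ of the form $\{\text{non-braid }\tIII\}\cup\{\twoA\text{ or }\twoB\}\cup CP$ is already a generating set by Theorem~\ref{Pol-thm1} (or its Mirror), so $\twoC$ is redundant, contradicting minimality. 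If instead every $\tIII$ move of $S$ is braid-type, then by the braid-type extension of Polyak's theorem (Theorem~\ref{thm:PolyakBraid} and Theorem~\ref{Extend-of-Polyak}) a generating set all of whose $\tIII$ moves are braid-type must contain both $\twoC$ and $\twoD$; since $\twoD \notin S$ this contradicts $S \in GS$. In every case we reach a contradiction, so $\twoD \in S$, and the reverse implication again follows by applying $r$.

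The main obstacle I anticipate is establishing (iv) together with (ii): pinning down a local weight $w(c)$ that is simultaneously invariant under all eight $\tIII$ moves (which shuffle three crossings and the strands around them), annihilates all $\tI$ kinks, cancels across a $\twoC$ bigon, and yet fails to cancel across a $\twoD$ bigon. Verifying the $\tIII$-invariance is the delicate point, since one must check that the chosen local datum is genuinely preserved by \emph{every} $\tIII$ move, not merely the braid-type ones, and then read off the $\twoC$-versus-$\twoD$ asymmetry carefully from the oriented pictures of the two coherent moves. A secondary point requiring care is the bookkeeping in the minimal-set reduction above, where one must ensure the chosen generating subset avoids the cases whose generating status is still unresolved (cf.\ the open set $\{\threeA,\twoA,\twoB,\oneC,\oneD\}$); the obstruction $\phi$ should be arranged so that these never obstruct the redundancy step.
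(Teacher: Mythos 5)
Your reduction skeleton is essentially the paper's: the paper also derives both sentences of the theorem from a single non-generation statement, namely that $\R I \cup \{\twoC\} \cup \R \III$ is not a generating set (Proposition~\ref{2cprop}), with the $\twoD$ counterpart (Proposition~\ref{2dprop}) obtained by a mechanical transcription, for which your orientation-reversal involution $r$ is a legitimate substitute (reversing all strands preserves \tI~and \tIII~setwise and swaps $\twoC \leftrightarrow \twoD$, as one sees from the orientation of the smoothed bigon circle). However, the heart of your argument --- the functional $\phi(D)=\sum_c w(c)$ with $w(c)$ a function of the sign $\varepsilon(c)$ and a ``local orientation datum'' --- cannot exist, and this is a genuine gap rather than a detail to be tuned. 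Up to isotopy of the changing disk, the only local invariant of a single oriented crossing is its sign; and even if you enrich the datum with component and over/under labels on an ordered two-component diagram, the bigons created by $\twoC$ and by $\twoD$ carry exactly the same multiset of local types: one positive and one negative crossing with the same strand on top. Hence any additive crossing count satisfying your (iii) (invariance under $\twoC$ forces the two weights to cancel) automatically satisfies invariance under $\twoD$, destroying (iv). The paper escapes precisely by abandoning additivity: its invariant $f(D_L)\in\pi(\alpha_*)$ is a \emph{word} in a free monoid, read along the first component from a base point and recording only the crossings of $c^1_2(D_L)$; by Lemma~\ref{lem:GDF2c} it is unchanged by all \tI~and \tIII~moves (your (i)--(ii)), while $\twoC$ inserts or deletes the block $b_-a_+$ and $\twoD$ the block $a_+b_-$. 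These differ only in the \emph{order} of the letters, which is exactly the information your sum abelianizes away; your (iv) is then replaced by a combinatorial argument that no sequence of insertions and deletions of $b_-a_+$ can cancel the seed word $a_+b_-$ of the diagram $D_{L_1}$.

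Your handling of the first sentence also diverges from the paper and is problematic as written. Invoking Theorem~\ref{Extend-of-Polyak} is circular in this paper's development, since the only-if half of that theorem is itself proved from the present theorem (via Proposition~\ref{braid&2cd} and Theorem~\ref{prop:cdAlone}); and the redundancy step for non-braid \tIII~fails in general, because the four-element sets $\{\threeA \text{ or } \threeH\}\cup\{\twoA \text{ or } \twoB\}\cup CP$ are \emph{not} all known to generate --- $\{\threeA,\twoA,\oneC,\oneD\}$ and its three relatives are precisely the open cases of Remark~\ref{remhomotopy}, and nothing forces the compatible pair inside your minimal $S$ to avoid them. Your closing caveat acknowledges this but does not repair it. The paper instead deduces the theorem directly from Corollary~\ref{cor:2c} and defers the residual case analysis to the proof of Theorem~\ref{Thm:5elements}, where the problematic pair is excluded by hypothesis. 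In short: your outer logic (core non-generation statement plus a symmetry) matches the paper, but the obstruction you propose to prove the core statement provably cannot have properties (i)--(iv) simultaneously, so the proposal does not yet contain a proof.
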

\begin{thm}\label{Extend-of-Polyak}
For any set $S \subset \R$ whose $\tI$ moves constitute a compatible pair, the following two statements hold. 
\begin{itemize}
    \item Let $\ast \in \{ b, c, d, e, f, g\}$.
        \begin{itemize}
            \item[(1)] A set $S$ containing $\threeAS$ is a \emph{minimal} generating set if and only if $S = \{\threeAS, \twoC, \twoD\} \cup CP$.
            \item[(1)$^{\prime}$]  A set $S$ containing $\threeAS$ is a generating set if and only if $S \supset \{\threeAS, \twoC, \twoD\} \cup CP$.
        \end{itemize}
    \item 
    Let $\ast \in  \{ a, h \}$.
    \begin{itemize}
        \item[(2)]
      A set $S$ containing  $\threeAS$ is a \emph{minimal} generating set if  $S = \{\threeAS, \twoC, \twoD \} \cup CP$.  
    \item[(2)$^{\prime}$] A set $S$ containing  $\threeAS$ is a  generating set if  $S \supset \{\threeAS, \twoC, \twoD \} \cup CP$.
    \end{itemize}
\end{itemize}
\end{thm}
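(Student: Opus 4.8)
The plan is to prove the four assertions by isolating three logically independent tasks—\emph{sufficiency} (that $\{\threeAS,\twoC,\twoD\}\cup CP$ generates $\R$), \emph{necessity} (that $\twoC$ and $\twoD$ must belong to any generating $S$ of the prescribed shape when $\threeAS$ is braid-type), and \emph{minimality}—and by first cutting down the case list with the Mirror correspondence. Since Mirror sends generating sets to generating sets, fixes $\twoC$ and $\twoD$, permutes the compatible pairs, and induces $\threeB\leftrightarrow\threeG$, $\threeC\leftrightarrow\threeE$, $\threeD\leftrightarrow\threeF$, and $\threeA\leftrightarrow\threeH$, it carries each assertion for $\threeAS$ to the corresponding assertion for its mirror. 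Hence it suffices to treat the braid-type cases $\ast\in\{c,d\}$ together with Polyak's $\ast=b$, and the single non-braid case $\ast=a$, the cases $\ast\in\{e,f,g,h\}$ following formally.

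For sufficiency I would argue type by type, always using Proposition~\ref{RI-gene} to obtain $\R I$ from $\twoC,\twoD$ and the compatible pair. For $\ast\in\{c,d\}$ the set $\{\threeAS,\twoC,\twoD\}\cup CP$ is already a generating set by Corollary~\ref{Cor:3*Gen}, and the case $\ast=b$ is exactly Theorem~\ref{thm:PolyakBraid}. The one case needing fresh work is $\ast=a$: here I would prove a diagrammatic lemma that $\{\threeA,\twoC,\twoD\}\cup CP$ generates the coherent moves $\twoA$ and $\twoB$ (realizing a braid-type \tII move by an explicit finite sequence of $\twoC$, $\threeA$, and $\twoD$ moves), after which Proposition~\ref{3ast-two2} produces all of $\R\II$ and Proposition~\ref{3ast-all2} produces all of $\R\III$.

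For necessity I fix a generating $S$ whose only \tIII move is a braid-type $\threeAS$ and whose \tI moves form a compatible pair, and show $\twoC,\twoD\in S$. The mechanism is Polyak's coherence (disorientation) invariant from the proof of Theorem~\ref{thm:PolyakBraid}: it is additive under composition and vanishes on all braid-type \tIII moves, on $\twoA,\twoB$, and on every \tI move—hence on every move $S$ can contain apart from $\twoC,\twoD$—while being nonzero on $\twoC$ and $\twoD$. Thus if $S$ contained neither $\twoC$ nor $\twoD$ it could only generate invariant-zero moves and never $\twoC$; so $\twoC\in S$ or $\twoD\in S$, and Theorem~\ref{prop:cdAlone} upgrades this to $\twoC,\twoD\in S$. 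As the argument uses only the braid-type (coherent) nature of the \tIII generator, it applies verbatim to $\ast=c,d$, extending Polyak's $\ast=b$ case. Minimality of $\{\threeAS,\twoC,\twoD\}\cup CP$ is then uniform in $\ast$: removing $\threeAS$ leaves no \tIII move, so $\R\III$ cannot be generated; removing $\twoC$ (resp.\ $\twoD$) leaves a set containing $\twoD$ (resp.\ $\twoC$) but neither $\twoA$ nor $\twoB$, excluded by Theorem~\ref{prop:cdAlone}; and removing either element of $CP$ leaves a single \tI move, excluded by Lemma~\ref{[Pol]3.1}. Combining sufficiency, necessity, and minimality yields $(1)$ and $(1)^{\prime}$ for braid-type $\ast$ and $(2)$, $(2)^{\prime}$ for $\ast\in\{a,h\}$; for the forward direction of $(1)$ one adds that any minimal $S\supseteq\{\threeAS,\twoC,\twoD\}\cup CP$ must equal it, the latter being already generating.

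I expect the main obstacle to be twofold. Conceptually it is the necessity step for $\ast=c,d$: one must check that Polyak's coherence invariant is genuinely insensitive to which braid-type \tIII move is present—equivalently, that no braid-type $\threeAS$ together with $\twoA,\twoB$ and a compatible pair can manufacture a disoriented $\twoC$. Computationally it is the non-braid lemma building $\twoA,\twoB$ from $\{\threeA,\twoC,\twoD\}\cup CP$, which rests on an explicit move sequence rather than an invariant and is the most delicate figure-level verification; everything else reduces to the cited propositions, to Theorem~\ref{prop:cdAlone}, and to Mirror.
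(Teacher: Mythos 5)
Your sufficiency and minimality arguments are essentially the paper's: generation from $\{\threeAS,\twoC,\twoD\}\cup CP$ via Proposition~\ref{RI-gene}, Corollary~\ref{Cor:3*Gen}, and explicit chains (the paper handles $\ast=a$ by first producing $\threeC\prec\{\twoC,\twoD,\threeA\}$ from Lemma~\ref{Generating3} and then $\twoA,\twoB$ from Lemma~\ref{Generating2} --- note those chains use a \tI~move, e.g.\ $\twoA\prec\{\oneC,\twoD,\threeC\}$, so your parenthetical ``sequence of $\twoC,\threeA,\twoD$ moves'' alone would not suffice, though your set does contain $CP$); minimality via Theorem~\ref{prop:cdAlone} and Lemma~\ref{[Pol]3.1} is the same as the paper's, and your Mirror reduction is a legitimate economy the paper does not use.

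The genuine gap is in your necessity step. You posit a single additive ``coherence invariant'' that vanishes on all braid-type \tIII~moves, on $\twoA,\twoB$, \emph{and on every \tI~move}, while being nonzero on $\twoC$ and $\twoD$. No such invariant is exhibited, and the natural candidates provably fail. Every \tI~move adds a smoothed circle, so the paper's circle counts are \emph{not} \tI-invariant: from the facts used in Proposition~\ref{braid&2cd}, the changes of $(C^+,C^-,w)$ are $(0,1,1)$, $(1,0,1)$, $(0,1,-1)$, $(1,0,-1)$ under $\oneA,\oneB,\oneC,\oneD$, and any linear functional $\lambda_+C^++\lambda_-C^-+\mu w$ vanishing on all four forces $\lambda_\pm=\mu=0$ --- so no smoothing-type invariant kills all \tI~moves yet detects $\twoC$ or $\twoD$. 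This is precisely why the paper keeps the compatible-pair hypothesis and argues in four cases, using $C^-$ for $(\oneB,\oneD)$, $C^+$ for $(\oneA,\oneC)$, $C^++C^--w$ for $(\oneA,\oneB)$, and $C^++C^-+w$ for $(\oneC,\oneD)$. Nor can the paper's word invariant $f$ play your role: a braid-type \tII~move between the two components also has a common over-strand at its two crossings, hence inserts a two-letter block into $f$, so $f$ is not $\twoA,\twoB$-invariant (Lemma~\ref{lem:GDF2c} claims invariance only under \tI~and \tIII, and Propositions~\ref{2cprop}, \ref{2dprop} accordingly forbid braid \tII~moves). If your invariant existed it would show $\R I\cup\{\twoA,\twoB,\threeAS\}$ is non-generating with \emph{no} restriction on the \tI~moves, strictly stronger than Proposition~\ref{braid&2cd}; you would have to construct it from scratch, or else substitute the paper's pair-by-pair smoothing argument. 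Relatedly, your upgrade ``$\twoC\in S$ or $\twoD\in S$, and Theorem~\ref{prop:cdAlone} gives both'' invokes the second clause of that theorem, which requires $S$ to contain neither $\twoA$ nor $\twoB$ --- not excluded at that point, so mixed cases such as $S\cap\R\II=\{\twoA,\twoC\}$ are not settled by this inference (the paper's own only-if wording glosses over the same case; an invariant with independent values on $\twoC$ and $\twoD$, say in $\mathbb{Z}^2$, would repair it, but again none is provided).
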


By explicitly listing generating sets that satisfy (1) and (2) of Theorem~\ref{Extend-of-Polyak}, we obtain Corollary~\ref{cor:List}.
\begin{cor}\label{cor:List}
Let $\textup{MGS}_{\ast}$ be as in Definition~\ref{def:minimal}.  The list of five-element minimal generating sets is as follows.  
\begin{align*}
&\textup{MGS}_a 
\supset 
\{
    \{\threeA, \twoC, \twoD, \oneA, \oneB \}, \{\threeA, \twoC, \twoD, \oneA, \oneC \}, \{\threeA, \twoC, \twoD, \oneB, \oneD \}, \{\threeA, \twoC, \twoD, \oneC, \oneD \} 
\}, \\
&\textup{MGS}_b 
= 
\{
    \{\threeB, \twoC, \twoD, \oneA, \oneB \}, \{\threeB, \twoC, \twoD, \oneA, \oneC \}, \{\threeB, \twoC, \twoD, \oneB, \oneD \}, \{\threeB, \twoC, \twoD, \oneC, \oneD \} 
\}, \\
&\textup{MGS}_c 
= 
\{
\{\threeC, \twoC, \twoD, \oneA, \oneB \}, \{\threeC, \twoC, \twoD, \oneA, \oneC \}, \{\threeC, \twoC, \twoD, \oneB, \oneD \}, \{\threeC, \twoC, \twoD, \oneC, \oneD \}
\}, \\
&\textup{MGS}_d 
= 
\{
\{\threeD, \twoC, \twoD, \oneA, \oneB \}, \{\threeD, \twoC, \twoD, \oneA, \oneC \}, \{\threeD, \twoC, \twoD, \oneB, \oneD \}, \{\threeD, \twoC, \twoD, \oneC, \oneD \}
\}, \\
&\textup{MGS}_e 
= 
\{ 
\{\threeE, \twoC, \twoD, \oneA, \oneB \}, \{\threeE, \twoC, \twoD, \oneA, \oneC \}, \{\threeE, \twoC, \twoD, \oneB, \oneD \}, \{\threeE, \twoC, \twoD, \oneC, \oneD \}
\}, \\
&\textup{MGS}_f 
= 
\{
\{\threeF, \twoC, \twoD, \oneA, \oneB \}, \{\threeF, \twoC, \twoD, \oneA, \oneC \}, \{\threeF, \twoC, \twoD, \oneB, \oneD \}, \{\threeF, \twoC, \twoD, \oneC, \oneD \} 
\}, \\
&\textup{MGS}_g 
= 
\{
\{\threeG, \twoC, \twoD, \oneA, \oneB \}, \{\threeG, \twoC, \twoD, \oneA, \oneC \}, \{\threeG, \twoC, \twoD, \oneB, \oneD \}, \{\threeG, \twoC, \twoD, \oneC, \oneD \}
\},\\
&\textup{MGS}_h 
\supset 
\{
    \{\threeH, \twoC, \twoD, \oneA, \oneB \}, \{\threeH, \twoC, \twoD, \oneA, \oneC \}, \{\threeH, \twoC, \twoD, \oneB, \oneD \}, \{\threeH, \twoC, \twoD, \oneC, \oneD \} 
\}. \\
\end{align*}
\end{cor}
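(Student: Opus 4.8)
My plan is to obtain the eight lists by enumerating the conclusion of Theorem~\ref{Extend-of-Polyak} over the four compatible pairs, treating the braid-type indices $\ast\in\{b,c,d,e,f,g\}$ (where an ``if and only if'' is available) separately from the non-braid indices $\ast\in\{a,h\}$ (where only an ``if'' is available). The single combinatorial input is that there are exactly four compatible pairs, $(\oneA,\oneB)$, $(\oneA,\oneC)$, $(\oneB,\oneD)$, $(\oneC,\oneD)$; as $CP$ runs over these, the family $\{\threeAS,\twoC,\twoD\}\cup CP$ is precisely the four quintuples printed in each row, and each such $S$ satisfies $S\cap\R\III=\{\threeAS\}$, hence lies in $GS_\ast$ as soon as it is shown to generate.

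First I would prove the inclusions that hold for every $\ast$, namely $\textup{MGS}_\ast\supset\{\dots\}$. Fix a compatible pair $CP$ and put $S=\{\threeAS,\twoC,\twoD\}\cup CP$. If $\ast\in\{b,c,d,e,f,g\}$ then $S$ is a minimal generating set by the ``if'' direction of Theorem~\ref{Extend-of-Polyak}(1); if $\ast\in\{a,h\}$ then $S$ is a minimal generating set by the ``if'' direction of Theorem~\ref{Extend-of-Polyak}(2). In both cases $S\in GS_\ast$, hence $S\in\textup{MGS}_\ast$, and letting $CP$ range over the four compatible pairs yields the four displayed sets. For $\ast\in\{a,h\}$ this already proves the stated $\supset$; no reverse inclusion is claimed there, which is consistent with the existence of the four-element members of $\textup{MGS}_a$ coming from Theorem~\ref{Pol-thm1} (and its analogues in the large table), since part~(2) of Theorem~\ref{Extend-of-Polyak} gives only a sufficient condition.

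The real work is the reverse inclusion $\textup{MGS}_\ast\subset\{\dots\}$ for the braid-type indices. Let $S\in\textup{MGS}_b$; the remaining braid rows are identical. By Lemma~\ref{[Pol]3.1} the set of \tI~moves of $S$, written $T = S\cap\R I$, contains at least one compatible pair $CP$. To invoke Theorem~\ref{Extend-of-Polyak}(1) I must verify its hypothesis, i.e.\ that $T$ equals a compatible pair, and I would do this in two steps. First, I claim $\{\twoC,\twoD\}\subseteq S$: by Theorem~\ref{prop:cdAlone} a minimal generating set contains $\twoC$ if and only if it contains $\twoD$, so $S\cap\{\twoC,\twoD\}$ is $\{\twoC,\twoD\}$ or $\emptyset$, and the empty case must be ruled out by the braid obstruction underlying Polyak's Theorem~\ref{thm:PolyakBraid} (a set whose only \tIII~move is the braid-type $\threeB$ and none of whose \tII~moves is $\twoC$ or $\twoD$ generates only braid-type moves, hence cannot generate $\twoC$, contradicting $S\in GS_b$). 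Granting $\{\twoC,\twoD\}\subseteq S$, Proposition~\ref{RI-gene} shows $\{\twoC,\twoD\}\cup CP$ already generates all of $\R I$; thus every \tI~move of $S$ lying outside $CP$ is redundant, and minimality forces $T=CP$. The hypothesis of Theorem~\ref{Extend-of-Polyak} now holds, and its part~(1) ``only if'' direction gives $S=\{\threeB,\twoC,\twoD\}\cup CP$. Since $CP$ is one of the four compatible pairs, $S$ occurs in the list, so $\textup{MGS}_b=\{\dots\}$; the identical argument, using the analogous braid invariant for each braid-type $\threeAS$, gives the equalities for $c,d,e,f,g$.

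The main obstacle is the step excluding $S\cap\{\twoC,\twoD\}=\emptyset$, that is, proving that a generating set in $GS_b$ must contain a non-braid-type \tII~move. Polyak's Theorem~\ref{thm:PolyakBraid} supplies exactly this necessity, but only under the standing assumption that the \tI~moves already form a compatible pair, whereas a priori the \tI~moves of $S$ could be a strict superset of a compatible pair. The point to secure is therefore that the relevant braid-type invariant is insensitive to the number of \tI~moves present: any set built from braid-type \tII~and~\tIII~moves together with arbitrarily many \tI~moves stays within the closed class of braid-type replacements, and so can never produce $\twoC$. Once this invariant is formulated without the compatible-pair hypothesis, the remainder—passing from $\{\twoC,\twoD\}\subseteq S$ to $T=CP$ via Proposition~\ref{RI-gene} and minimality, and then to $S=\{\threeB,\twoC,\twoD\}\cup CP$ via Theorem~\ref{Extend-of-Polyak}(1)—is routine, and enumerating the four compatible pairs completes every braid-type row.
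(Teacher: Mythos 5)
Your proposal is correct, and it is in fact more careful than the paper's own proof, which is a one-line enumeration: Corollary~\ref{cor:List} is obtained ``by explicitly listing generating sets that satisfy (1) and (2) of Theorem~\ref{Extend-of-Polyak}'' as $CP$ runs over the four compatible pairs. That enumeration is exactly your first paragraph, so for the inclusions $\supset$ (all eight indices, including the deliberately one-sided cases $a,h$, where you rightly note consistency with the four-element sets of Theorem~\ref{Pol-thm1}) the two arguments coincide. The genuine difference is in the equalities for $\ast\in\{b,c,d,e,f,g\}$: the paper reads them off from the ``only if'' of Theorem~\ref{Extend-of-Polyak}(1), silently inheriting that theorem's standing hypothesis that the \tI~moves of $S$ already constitute a compatible pair, whereas $\textup{MGS}_\ast$ as defined in Definition~\ref{def:minimal} imposes no such restriction on $S\cap\R I$. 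You noticed this and closed the loop: Lemma~\ref{[Pol]3.1} supplies $CP\subseteq S$; Theorem~\ref{prop:cdAlone} reduces $S\cap\{\twoC,\twoD\}$ to $\{\twoC,\twoD\}$ or $\emptyset$; a braid obstruction excludes $\emptyset$; and Proposition~\ref{RI-gene} together with minimality forces $S\cap\R I=CP$, after which Theorem~\ref{Extend-of-Polyak}(1) applies. Your route buys an unconditional derivation of the stated equalities; the paper's buys brevity at the cost of implicitly restricting to sets satisfying the compatible-pair hypothesis (equivalently, to the five-element reading announced in the corollary's first sentence and in Theorem~\ref{Thm:5elements}).

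The one step you assert but do not prove---that braid-type moves of \tII~and \tIII~together with \emph{arbitrarily many} \tI~moves can never produce $\twoC$ or $\twoD$---is true, and it can be secured by a mod-$2$ refinement of the smoothing invariant of Proposition~\ref{braid&2cd} (whose four case-by-case invariants $C^-$, $C^+$, $C^++C^--w$, $C^++C^-+w$ each break once extra \tI~moves are adjoined, e.g.\ $\oneA$ changes $C^-$). From the four observations in that proposition's proof, each of $\oneA,\oneB,\oneC,\oneD$ changes $C^++C^-+w$ by $0$ or $\pm 2$, while $\twoA$, $\twoB$, and the braid-type moves of \tIII~preserve the smoothed diagram and the writhe, hence leave it unchanged; so $C^++C^-+w \bmod 2$ is invariant under every move of $\{\twoA,\twoB,\threeAS\}\cup\R I$ but changes under $\twoC$ and under $\twoD$. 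This rules out $S\cap\{\twoC,\twoD\}=\emptyset$ regardless of how many \tI~moves lie in $S$, which is exactly the hypothesis-free formulation you asked for. With that lemma in place, the rest of your argument (redundancy of \tI~moves outside $CP$ via Proposition~\ref{RI-gene}, then Theorem~\ref{Extend-of-Polyak}(1)) is complete and correct.
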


\begin{rem}The following should be noted.
    \begin{itemize}
        \item $(1)$ of Theorem~\ref{Extend-of-Polyak} shows that a set S containing one braid $\tIII$ is a minimal generating set if and only if it is coherent $\textup{\II}$.
        \item $(2)$ of Theorem~\ref{Extend-of-Polyak} shows that a set S containing one non-braid $\tIII$ is a minimal generating set if  it is coherent $\textup{\II}$.
        However, the equivalence fails: there exist minimal generating sets containing $\threeA$ or $\threeH$, each of which is not coherent $\textup{\II}$.
    \end{itemize}
\end{rem}

\begin{thm}\label{Thm:5elements}
    There is no minimal generating set with six or more elements.
    A set $S \subset \R$ is a  five-element minimal generating set 
    if and only if $S \in \textup{MGS}_{\ast}$ with $\ast \in \{b, c, d, e, f, g\}$, or $S \in \textup{MGS}_{\ast}$ with $\ast \in \{a, h \}$ and is coherent~\II. 
\end{thm}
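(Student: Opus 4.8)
The plan is to classify an arbitrary minimal generating set $S$ by its \tII~content and then collapse each case onto one of the small generating sets already produced. By Lemma~\ref{[Pol]3.1}, $S$ contains a compatible pair $CP$, at least one \tII~move, and at least one \tIII~move, and Theorem~\ref{prop:cdAlone} guarantees that $\{\twoC,\twoD\}$ is either contained in $S$ or disjoint from it. First I would rule out the mixed possibility: if $S$ contained $\{\twoC,\twoD\}$ together with a \tII~move of braid type ($\twoA$ or $\twoB$), then for any $\threeAS\in S$ the subset $\{\threeAS,\twoC,\twoD\}\cup CP$ already generates by Theorem~\ref{Extend-of-Polyak}, so that braid move would be deletable, contradicting minimality. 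Hence the \tII~content of any minimal generating set is either exactly $\{\twoC,\twoD\}$ (coherent $\II$) or a nonempty subset of $\{\twoA,\twoB\}$; in particular it has at most two elements.

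The coherent $\II$ case is the one I expect to go through cleanly. Choosing any $\threeAS\in S$ and any compatible pair $CP\subseteq S$, the hypothesis of Theorem~\ref{Extend-of-Polyak} (the \tI~moves forming a compatible pair) holds for the subset $\{\threeAS,\twoC,\twoD\}\cup CP$, so parts (1)$^{\prime}$ and (2)$^{\prime}$ show it is already a generating set. Minimality of $S$ then forces $S=\{\threeAS,\twoC,\twoD\}\cup CP$, a five-element coherent $\II$ set, which parts (1),(2) confirm is minimal. These are exactly the sets of Corollary~\ref{cor:List}, which both settles the ``if'' direction and shows that every coherent $\II$ minimal generating set is a listed five-element set.

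For the non-coherent case, where $S\cap\{\twoC,\twoD\}=\emptyset$ and the \tII~content lies in $\{\twoA,\twoB\}$, I would first show that $S$ must contain a \tIII~move of non-braid type, using the invariant behind Theorem~\ref{thm:PolyakBraid} and part (1)$^{\prime}$ of Theorem~\ref{Extend-of-Polyak}: \tIII~moves of braid type, together with $\twoA,\twoB$ and arbitrary \tI~moves, cannot create $\twoC$ or $\twoD$, so an all-braid \tIII~set lacking $\{\twoC,\twoD\}$ could not generate $\R\II$. Fixing a non-braid $\threeAS$ with $\ast\in\{a,h\}$, Propositions~\ref{3ast-two2} and~\ref{3ast-all2} let me reproduce every further \tIII~move from $\threeAS$ and the generated \tII~moves, while Proposition~\ref{RI-gene} removes any superfluous \tI~move; thus minimality collapses the \tIII~content to the single move $\threeAS$ and the \tI~content to one compatible pair. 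Together with the bound of at most two \tII~moves this gives $|S|\le 5$ in every case, so there is no minimal generating set with six or more elements. Whenever one of the four-element reductions $\{\threeAS,\twoA\}\cup CP$ or $\{\threeAS,\twoB\}\cup CP$ is known to generate --- Theorem~\ref{Pol-thm1} and its mirrors cover the pairs $(\oneA,\oneB),(\oneA,\oneC),(\oneB,\oneD)$ --- minimality pushes $S$ down to that four-element set, so the corresponding non-coherent minimal generating sets are four-element and never five.

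The hard part, and the sole gap in the ``only if'' direction, will be the non-coherent family with compatible pair $(\oneC,\oneD)$: there the candidate four-element reductions $\{\threeA,\twoA,\oneC,\oneD\}$ and $\{\threeA,\twoB,\oneC,\oneD\}$, and their mirrors, have unresolved generating status, so I cannot yet decide whether $\{\threeA,\twoA,\twoB,\oneC,\oneD\}$ collapses to four elements or survives as a genuine non-coherent five-element minimal generating set. This is precisely the open configuration of Remark~\ref{remhomotopy}; it is the only place the reduction stalls, and it affects neither the coherent $\II$ classification nor the non-existence of six-element minimal generating sets established above.
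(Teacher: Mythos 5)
Your proposal follows essentially the same route as the paper's proof: a case split on the \tII~content (coherent~\II~versus a nonempty subset of $\{\twoA,\twoB\}$) driven by Theorem~\ref{prop:cdAlone} and Theorem~\ref{Extend-of-Polyak}, collapse onto the known four- and five-element generating sets by minimality, and the same explicit carve-out of the $(\oneC,\oneD)$ configuration that the paper's own proof makes when it supposes $\{1_{\ast},1_{\sharp}\}\neq\{\oneC,\oneD\}$ and points to the Open Problem. One small correction: the four-element sets $\{\threeA,\twoA,\oneA,\oneC\}$ and $\{\threeA,\twoA,\oneB,\oneD\}$ are \emph{not} mirrors of Theorem~\ref{Pol-thm1} (the mirror of $\{\threeA,\twoA,\oneA,\oneB\}$ is $\{\threeH,\twoB,\oneC,\oneD\}$); they are minimal generating sets on independently known grounds (cf.~Table~\ref{table:classificationAH}), which is all your reduction actually needs and is precisely what the paper invokes with its ``(known to be)'' clause.
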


\section{Proofs}\label{proof}
\subsection{Lemmas on generating oriented Reidemeister moves}
    In this section, we summarize, by type, which Reidemeister moves can be generated from which sets of moves.
    We write 
    $ m \prec S $
    to indicate that the move $m \in \R $ is generated by the elements of the set $ S \subset \R $.
\subsubsection{Generating \tI}

\begin{lem}\label{Generating1}
    Moves of $\tI$ can be generated as follows.
  \begin{align}
    \oneA \prec \{ \oneD, \twoD \}.\\
    \oneB \prec \{ \oneC, \twoC \}.\\
    \oneC \prec \{ \oneB, \twoD \}.\\
    \oneD \prec \{ \oneA, \twoC \}.
  \end{align}
\end{lem}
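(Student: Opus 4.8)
The plan is to prove each of the four relations diagrammatically: for each one I exhibit, inside the changing disk, a finite sequence of planar isotopies together with moves of the indicated $\tI$ and $\tII$ types that carries the before-diagram of the target $\tI$ move to its after-diagram. The mechanism is the same in every case, namely \emph{kink transfer}. At the self-crossing of a $\tI$ curl the two local arcs are anti-parallel (the strand makes a U-turn), so a non-braid $\tII$ move can drag the through-strand across the loop and convert a curl of one kind into a curl of another. This is exactly why relations (1),(3) use $\twoD$ while (2),(4) use $\twoC$, and why no braid-type $\tII$ move can appear here: the local orientations force the anti-parallel (non-braid) $\tII$ move.

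Before drawing anything, I would halve the work using the Mirror operation (a crossing switch at every crossing). Applying Mirror to a realization of $m \prec S$ yields a realization of $\mathrm{Mirror}(m) \prec \mathrm{Mirror}(S)$, since the whole move sequence can be mirrored crossing-by-crossing. Reading the table of mirror pairs gives $\mathrm{Mirror}(\oneA)=\oneC$, $\mathrm{Mirror}(\oneD)=\oneB$, $\mathrm{Mirror}(\oneC)=\oneA$, $\mathrm{Mirror}(\oneB)=\oneD$, and $\mathrm{Mirror}(\twoC)=\twoC$, $\mathrm{Mirror}(\twoD)=\twoD$. Hence relation (3), $\oneC \prec \{\oneB,\twoD\}$, is precisely the Mirror of relation (1), and relation (4), $\oneD \prec \{\oneA,\twoC\}$, is the Mirror of relation (2). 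It therefore suffices to establish (1) and (2) directly.

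For (1), $\oneA \prec \{\oneD,\twoD\}$, I realize the $\oneA$ move by the sequence that begins with the simple arc (the before-diagram of $\oneA$), uses $\oneD$ to introduce an auxiliary $\oneD$ curl, and then uses $\twoD$ moves to slide that curl's crossing across the strand; a planar isotopy identifies the result with the after-diagram of $\oneA$. The crossing count is consistent because each $\twoD$ changes it by two while the source and target curls carry the same self-crossing number. For (2), $\oneB \prec \{\oneC,\twoC\}$, I run the structurally identical construction with $\oneC$ and $\twoC$ in place of $\oneD$ and $\twoD$, using $\twoC$ because the $\oneC$ curl presents the opposite local over/under data at its anti-parallel U-turn.

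The only genuine obstacle is bookkeeping rather than conceptual depth: since the four $\tI$ moves differ only by local orientation and over/under data, one must track these through the $\tII$ slide carefully enough to land on the exact target curl (say $\oneA$) rather than a neighboring type (say $\oneB$). Concretely, in each figure I would verify that (i) the sign of the self-crossing produced after the slide matches the definition of the target move, and (ii) the orientation of the emerging loop agrees. The Mirror reduction above means this verification has to be carried out honestly only twice, for (1) and (2); relations (3) and (4) then inherit correctness automatically.
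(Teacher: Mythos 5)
Your proposal is correct and in substance identical to the paper's own proof: the paper likewise establishes each of the four relations by an explicit diagrammatic sequence inside the changing disk (Figure~\ref{movie1}), in which the available type~I move supplies the kink and the paired non-braid type~II move transfers it across the strand, with exactly the crossing-count and orientation bookkeeping you describe. Your only departure is deducing relations (3) and (4) from (1) and (2) via the Mirror operation --- a legitimate halving of the casework, consistent with the paper's own corollary that Mirror preserves generation, where the paper instead simply draws all four sequences.
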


\begin{proof}
For the first case, $\oneA$ can be generated by the following sequence.
The same applies to the other $\tI$ moves as in Figure\ref{movie1}.
  \begin{figure}[h]
      \center
      \includegraphics[width=0.75\linewidth]{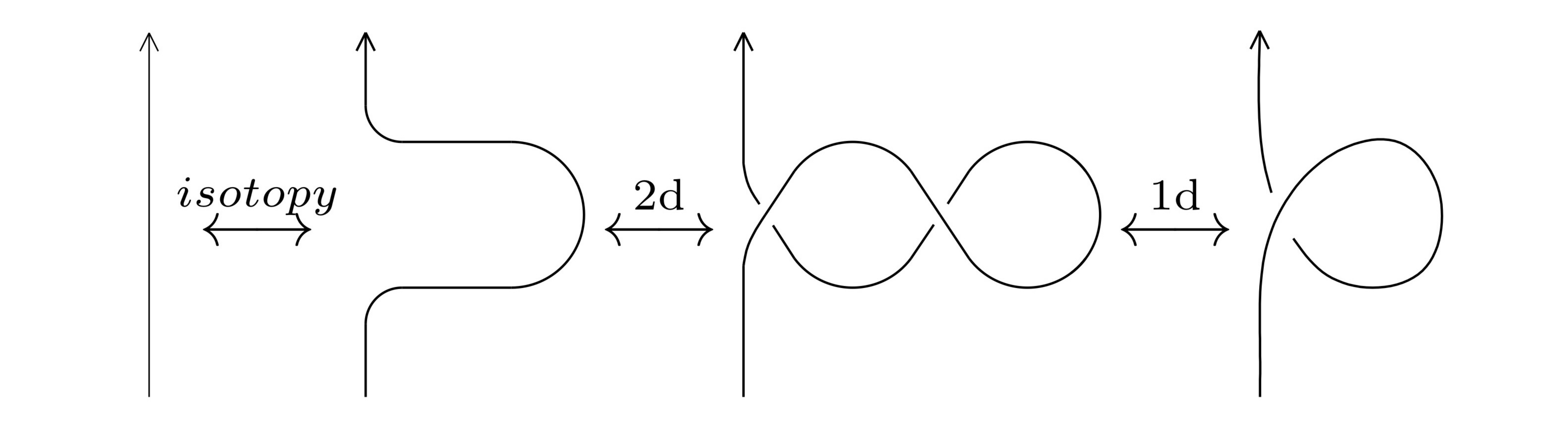}
      \caption{}
      \label{movie1}
  \end{figure}
\end{proof}



\begin{rem}
The following diagram is obtained from Lemma~\ref{Generating1}.
The pairs ($\oneA, \oneD$) and ($\oneB, \oneC$) can be transformed into each other by applying either 2c or 2d.
\end{rem}

\vspace{-2.5cm}
\begin{center}
    \includegraphics[width=0.6\linewidth]{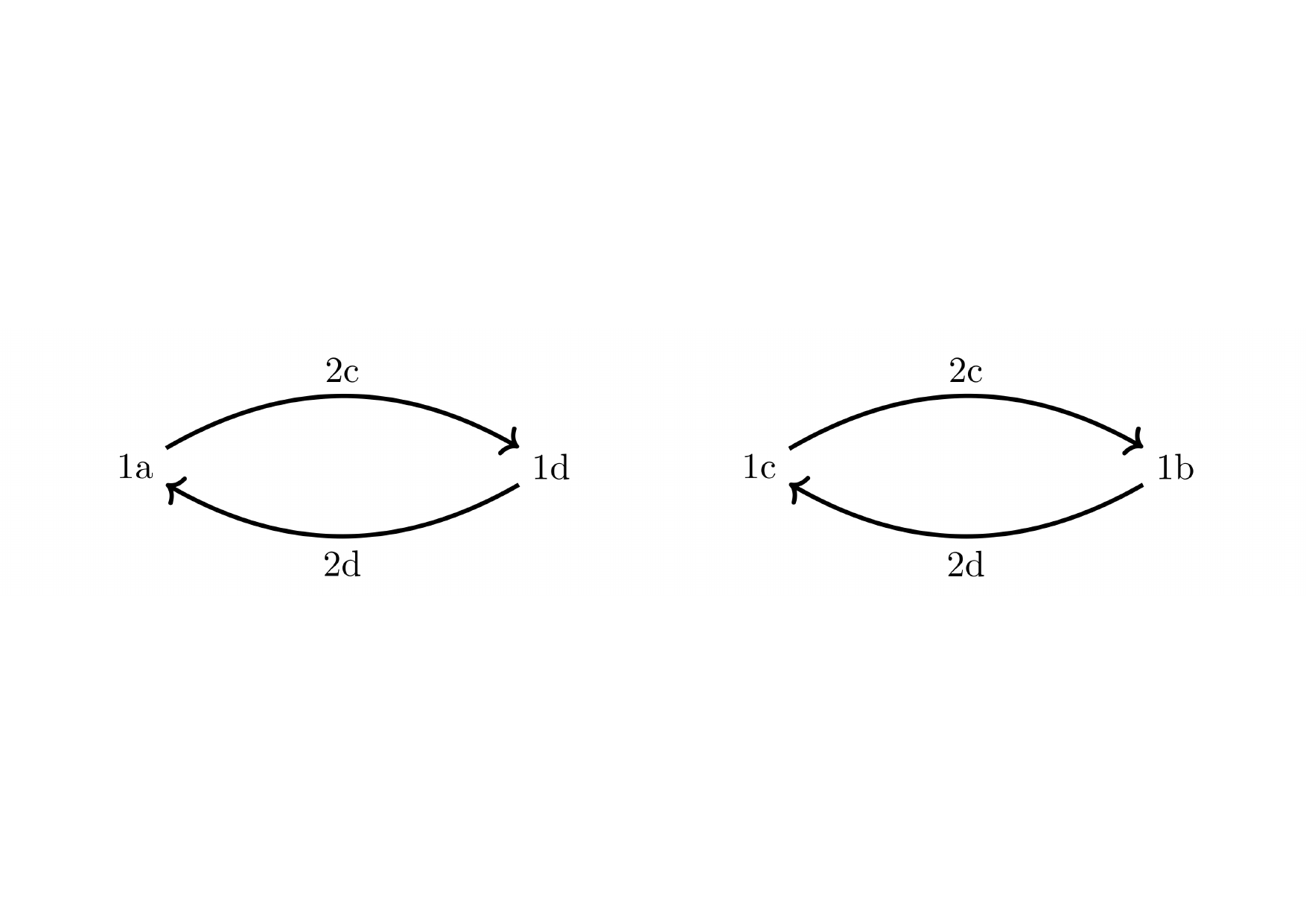}
\end{center}  
\vspace{-2cm}


Proposition~\ref{RI-gene} is proved by Lemma~\ref{Generating1}.

\subsubsection{Generating \tII}

\begin{lem}\label{Generating2}
    Moves of $\tII$ can be generated as follows.
    \begin{align}
      \twoA \prec \{ \oneA, \twoD, \threeD \},
      \twoA \prec \{ \oneB, \twoC, \threeE \},
      \twoA \prec \{ \oneC, \twoD, \threeC \},
      \twoA \prec \{ \oneD, \twoC, \threeF \}.\\
      \twoB \prec \{ \oneA, \twoD, \threeE \}, 
      \twoB \prec \{ \oneB, \twoC, \threeD \}, 
      \twoB \prec \{ \oneC, \twoD, \threeF \}, 
      \twoB \prec \{ \oneD, \twoC, \threeC \}.\\ 
      \twoC \prec \{ \oneA, \twoA, \threeA \},
      \twoC \prec \{ \oneA, \twoB, \threeA \},
      \twoC \prec \{ \oneC, \twoA, \threeH \},
      \twoC \prec \{ \oneC, \twoB, \threeH \}.\\
      \twoD \prec \{ \oneB, \twoA, \threeA \},
      \twoD \prec \{ \oneB, \twoB, \threeA \},
      \twoD \prec \{ \oneD, \twoA, \threeH \},
      \twoD \prec \{ \oneD, \twoB, \threeH \}.
    \end{align}
\end{lem}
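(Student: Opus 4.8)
The plan is to prove each of the sixteen relations by exhibiting an explicit \emph{movie}: a finite sequence of diagrams inside the changing disk, beginning at the ``before'' picture of the target $\tII$ move and ending at its ``after'' picture, in which every step is a planar isotopy or a single application of a move from the indicated set. Producing such a movie is exactly what the symbol $\prec$ asserts, so this settles every line.

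First I would halve the work using the Mirror operation. Mirror is an involution of $\R$ that sends each legal move to a legal move, so applying it termwise to a movie witnessing $m \prec S$ produces a movie witnessing $\mathrm{Mirror}(m) \prec \mathrm{Mirror}(S)$. From the mirror table, Mirror carries every relation for $\twoB$ to one for $\twoA$ (for instance $\twoA \prec \{\oneA, \twoD, \threeD\}$ maps to $\twoB \prec \{\oneC, \twoD, \threeF\}$), and it pairs the $\twoC$ relations among themselves ($\twoC \prec \{\oneA,\twoA,\threeA\}$ with $\twoC \prec \{\oneC,\twoB,\threeH\}$, and $\twoC \prec \{\oneA,\twoB,\threeA\}$ with $\twoC \prec \{\oneC,\twoA,\threeH\}$) and likewise the $\twoD$ relations. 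Hence it is enough to construct movies for the four $\twoA$ relations and for two of the $\twoC$ and two of the $\twoD$ relations; the remaining eight follow by mirroring.

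The movies share a single template, which also accounts for the pattern in the statement. The structural point to respect is that each target move is produced from an \emph{input} $\tII$ move of the \emph{opposite} braid-character together with a $\tIII$ move of the \emph{same} braid-character as the target: the braid moves $\twoA,\twoB$ are built from the non-braid moves $\twoC,\twoD$ and a braid $\tIII$ move, while the non-braid moves $\twoC,\twoD$ are built from the braid moves $\twoA,\twoB$ and a non-braid $\tIII$ move ($\threeA$ or $\threeH$). Concretely, the construction inserts the designated Reidemeister~$\tI$ kink (one of $\oneA, \oneB, \oneC, \oneD$) on one strand, slides the resulting loop across the second strand by the designated $\tIII$ move, and cancels the two crossings thereby created against a single application of the input $\tII$ move, finally absorbing the leftover kink. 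For the non-braid targets $\twoC$ and $\twoD$ these are precisely the local reductions underlying Polyak's generating set $\{\threeA,\twoA,\oneA,\oneB\}$ of Theorem~\ref{Pol-thm1}.

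I expect the genuine difficulty to be bookkeeping rather than ideas. At every intermediate diagram one must track the over/under data and the orientations of both strands so that each elementary step is an instance of the precisely \emph{named} move---the correct one among $\threeA,\dots,\threeH$, braid $\twoA,\twoB$ versus non-braid $\twoC,\twoD$, and the correct kink among $\oneA,\oneB,\oneC,\oneD$---rather than merely a move of the right type. Making the inserted kink compatible with the slide, so that the loop meets the second strand with exactly the crossing data the chosen $\tIII$ move requires, is what forces which $\tI$ and which $\tIII$ label occurs in each line; carrying out and displaying this compatibility check for the base cases is the real content, and it is what the accompanying figures would record.
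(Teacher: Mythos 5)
Your proposal takes essentially the same route as the paper: the paper's proof of Lemma~\ref{Generating2} consists exactly of exhibiting an explicit movie inside the changing disk for the first relation (Figure~\ref{movie2}) and noting that the remaining fifteen cases are analogous, which is precisely your template of kink insertion, slide by the designated third move, cancellation by the input second move, and absorption of the leftover kink, with the same braid-type versus non-braid-type pattern you identify. Your mirror-involution reduction from sixteen cases to eight is correct (it is consistent with the paper's mirror table and its corollary that mirrors of generating sets are generating sets) and is a small economy the paper does not invoke, but it does not alter the substance of the argument.
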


\begin{proof}
For the first case, $\twoA$ can be generated by the following sequence.
The same applies to the other $\tII$ moves as in Figure~\ref{movie2}.
   \begin{figure}[h]
      \center
      \includegraphics[width=0.9\linewidth]{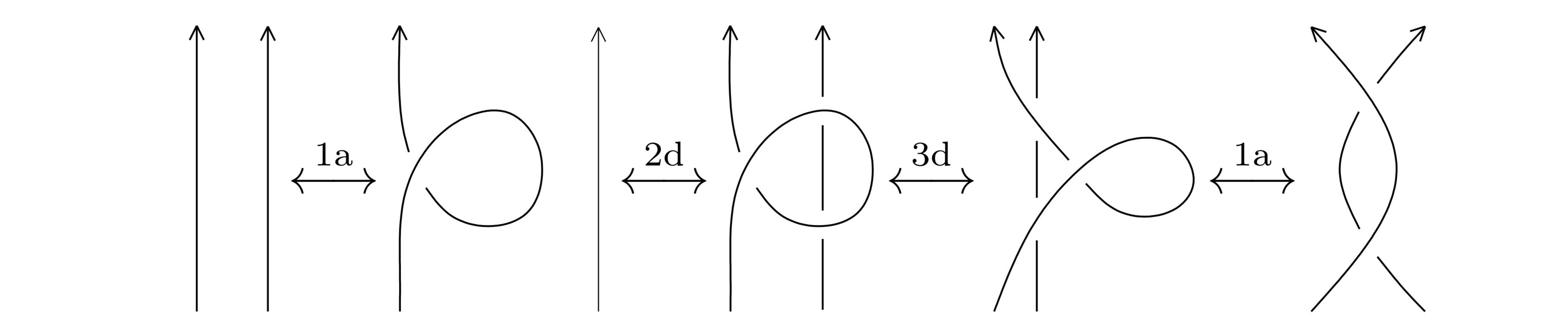}
      \caption{}
      \label{movie2}
  \end{figure}
\end{proof}

\begin{rem}
The following diagram is obtained from Lemma~\ref{Generating2}.
The sequence of moves in the proof of Lemma~\ref{Generating2} does not allow a transformation between $\twoA$ and $\twoB$, or between $\twoC$ and $\twoD$.
\end{rem}

\begin{center}
    \includegraphics[width=0.5\linewidth]{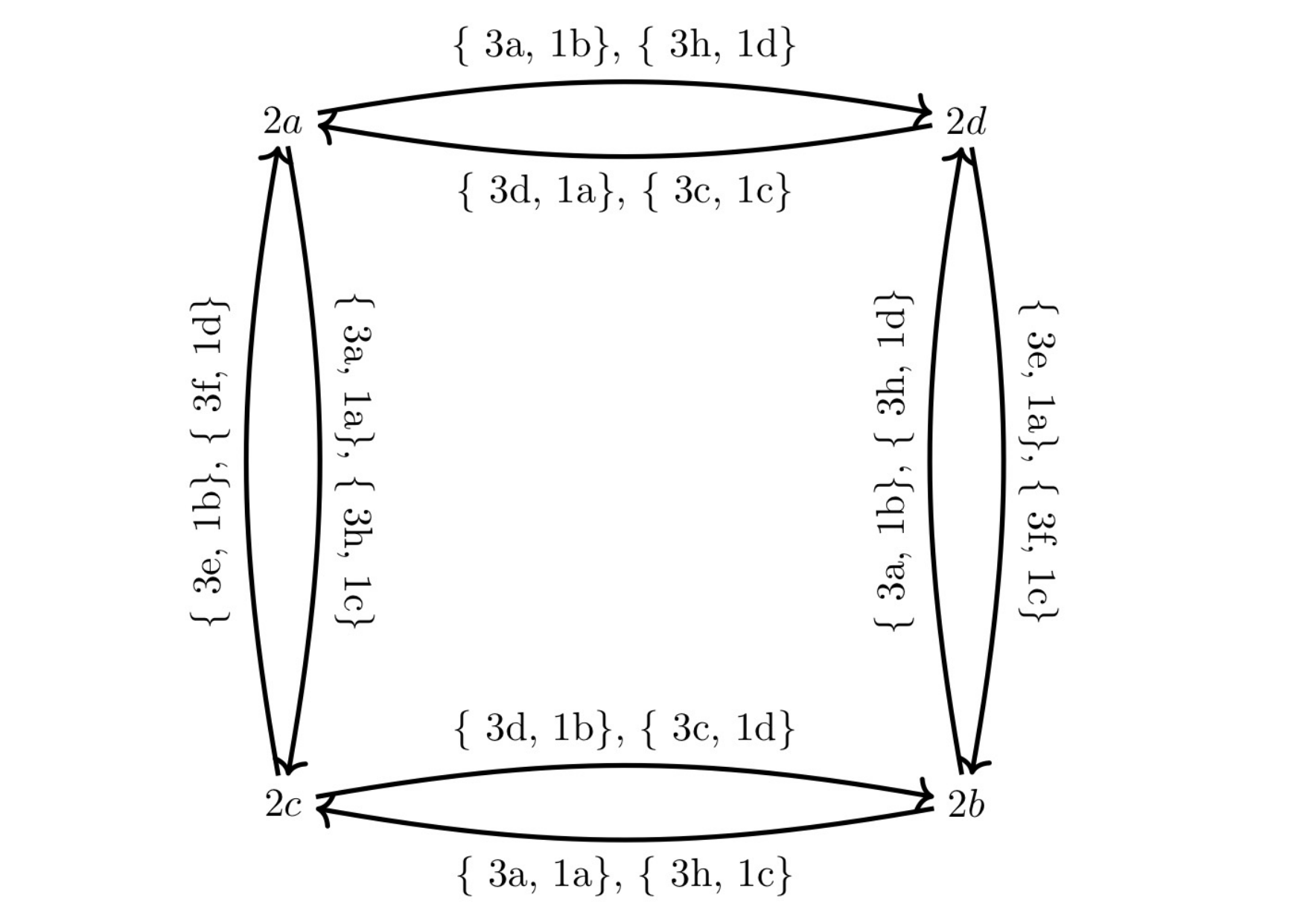}
\end{center}

Proposition~\ref{3ast-two2} is proved by Lemma~\ref{Generating2}.

\subsubsection{Generating \tIII}

\begin{lem}\label{Generating3}
Moves of $\tIII$ can be generated as follows.
  \begin{align}
    \threeA \prec \{ \twoC, \twoD, \threeC \},
    \threeA \prec \{ \twoC, \twoD, \threeB \},
    \threeA \prec \{ \twoC, \twoD, \threeF \}.\\
    \threeB \prec \{ \twoC, \twoD, \threeA \},
    \threeB \prec \{ \twoA, \twoB, \threeE \},
    \threeB \prec \{ \twoB, \twoA, \threeD \}.\\
    \threeC \prec \{ \twoA, \twoB, \threeE \},
    \threeC \prec \{ \twoC, \twoD, \threeA \}, 
    \threeC \prec \{ \twoA, \twoB, \threeG \}.\\
    \threeD \prec \{ \twoC, \twoD, \threeH \}, 
    \threeD \prec \{ \twoA, \twoB, \threeF \}, 
    \threeD \prec \{ \twoA, \twoB, \threeB \}.\\
    \threeE \prec \{ \twoA, \twoB, \threeB \}, 
    \threeE \prec \{ \twoA, \twoB, \threeC \}, 
    \threeE \prec \{ \twoC, \twoD, \threeH \}.\\
    \threeF \prec \{ \twoA, \twoB, \threeD \},
    \threeF \prec \{ \twoA, \twoB, \threeG \}, 
    \threeF \prec \{ \twoD, \twoC, \threeA \}.\\
    \threeG \prec \{ \twoA, \twoB, \threeF \},
    \threeG \prec \{ \twoC, \twoD, \threeH \},
    \threeG \prec \{ \twoB, \twoA, \threeC \}.\\
    \threeH \prec \{ \twoC, \twoD, \threeG \},
    \threeH \prec \{ \twoC, \twoD, \threeD \},
    \threeH \prec \{ \twoC, \twoD, \threeE \}.
  \end{align}
\end{lem}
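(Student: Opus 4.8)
The plan is to establish each of the twenty-four relations in exactly the same diagrammatic manner as Lemmas~\ref{Generating1} and~\ref{Generating2}: for every relation $m \prec S$ I would exhibit a finite ``movie'', i.e.\ a sequence of diagrams in which consecutive frames differ by a single move of $S$ or by a planar isotopy, starting at the before-diagram of $m$ and ending at its after-diagram. The bookkeeping can be halved at once by the mirror involution. Since mirroring acts by $\threeA\leftrightarrow\threeH$, $\threeB\leftrightarrow\threeG$, $\threeC\leftrightarrow\threeE$, $\threeD\leftrightarrow\threeF$, $\twoA\leftrightarrow\twoB$, $\twoC\mapsto\twoC$, $\twoD\mapsto\twoD$, and since mirroring commutes with performing Reidemeister moves (so the mirror of a valid movie is again a valid movie), each listed relation is carried to another listed relation; one checks directly that the list is closed under this involution. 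Hence it suffices to construct the movies whose target is $\threeA,\threeB,\threeC$ or $\threeD$, and to obtain the remaining twelve (targets $\threeE,\threeF,\threeG,\threeH$) by reflecting those movies.

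The relations organize themselves into two families according to the $\tII$ moves that occur, and I would treat them separately. The first family is mediated by the non-braid moves $\twoC,\twoD$ and always connects a non-braid $\tIII$ move ($\threeA$ or $\threeH$) with a braid-type one; the second is mediated by the braid moves $\twoA,\twoB$ and connects braid-type $\tIII$ moves among themselves. In both cases the underlying mechanism is the same ``trading'' trick: to realize the target move from a different $\tIII$ move of $S$, I would first use one of the available $\tII$ moves to introduce an auxiliary pair of crossings forming a bigon alongside the three strands, then apply the $\tIII$ move of $S$ to push the third strand across in its new position, and finally use the other $\tII$ move to cancel the auxiliary bigon and arrive at the after-diagram. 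For the braid-mediated family this is essentially the operation of commuting two crossings in a braid word, so the movies are short and uniform; for the $\twoC/\twoD$-mediated family the auxiliary bigon supplies the ``turn-back'' that converts a non-braid crossing pattern into a coherently oriented one.

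The hard part will be the orientation and over/under bookkeeping rather than the topology. For each movie I must verify three things: that every elementary frame change is \emph{exactly one} allowed move, with no illegal move slipping in; that the signs and orientations at the auxiliary bigon are precisely those for which the intended $\twoC/\twoD$ (respectively $\twoA/\twoB$) move genuinely applies; and that the first and last frames agree with the before- and after-diagrams of the target move up to planar isotopy. The non-braid-mediated conversions are the most delicate, because $\threeA$ and $\threeH$ carry a mixed orientation pattern at their three crossings, so I must select the auxiliary crossing pair with the correct signs and pick the correct one of $\twoC$ versus $\twoD$ at each end; a sign error there would produce a movie connecting the wrong pair of moves. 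Once the representative movies are drawn and recorded as figures, the lemma follows, and with it Proposition~\ref{3ast-all2}.
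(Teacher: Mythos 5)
Your proposal is correct and takes essentially the same route as the paper: the paper's proof of Lemma~\ref{Generating3} also consists of exhibiting explicit movies (create an auxiliary bigon with one \tII~move, push the third strand across with the available \tIII~move, then cancel the bigon with the other \tII~move), displaying the first case and deferring the remaining relations to Figure~\ref{movie3}. Your mirror-involution reduction to the twelve relations with targets $\threeA,\threeB,\threeC,\threeD$ is a sound minor economy the paper does not invoke in this proof, and the list is indeed closed under the mirroring $\threeA\leftrightarrow\threeH$, $\threeB\leftrightarrow\threeG$, $\threeC\leftrightarrow\threeE$, $\threeD\leftrightarrow\threeF$, $\twoA\leftrightarrow\twoB$ as you assert.
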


\begin{proof}
For the first case, $\threeA$ can be generated by the following sequence.
The same applies to the other $\tIII$ moves as in Figure~\ref{movie3}.
    \begin{figure}[h]
      \center
      \includegraphics[width=1\linewidth]{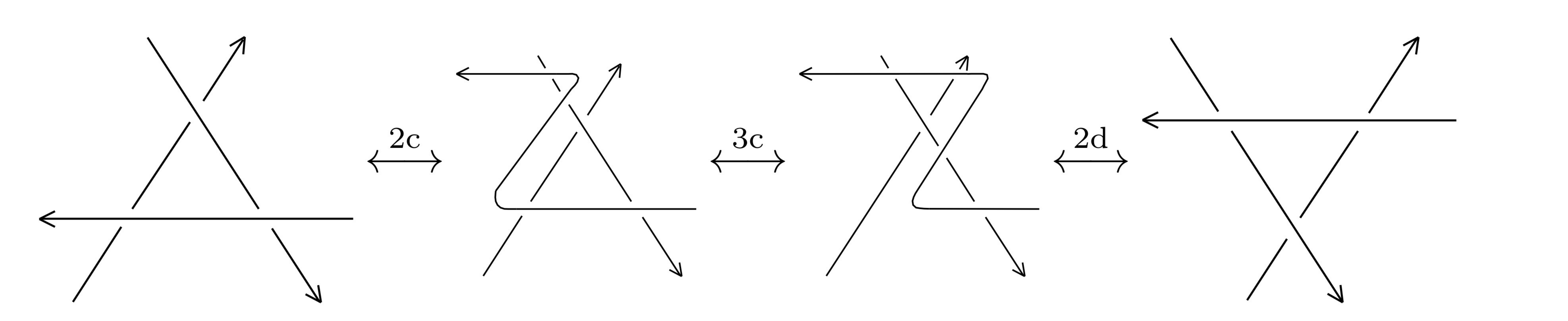}
      \caption{}
      \label{movie3}
  \end{figure}
\end{proof}

\begin{rem}
The following diagram is obtained from Lemma~\ref{Generating3}.
The blue arrows represent the transitions realized by the moves $\twoA$ and $\twoB$, whereas red arrows represent transitions realized by the moves $\twoC$ and $\twoD$.
We observe that transforming between non-braid-type moves ($\threeA, \threeH$) and braid-type moves ($\threeB, \threeC, \threeD, \threeE, \threeF, \threeG$) require the use of $\twoC$ and $\twoD$, while transformations among braid-type moves themselves can be achieved using $\twoA$ and $\twoB$.
\end{rem}


\begin{center}
    \includegraphics[width=0.5\linewidth]{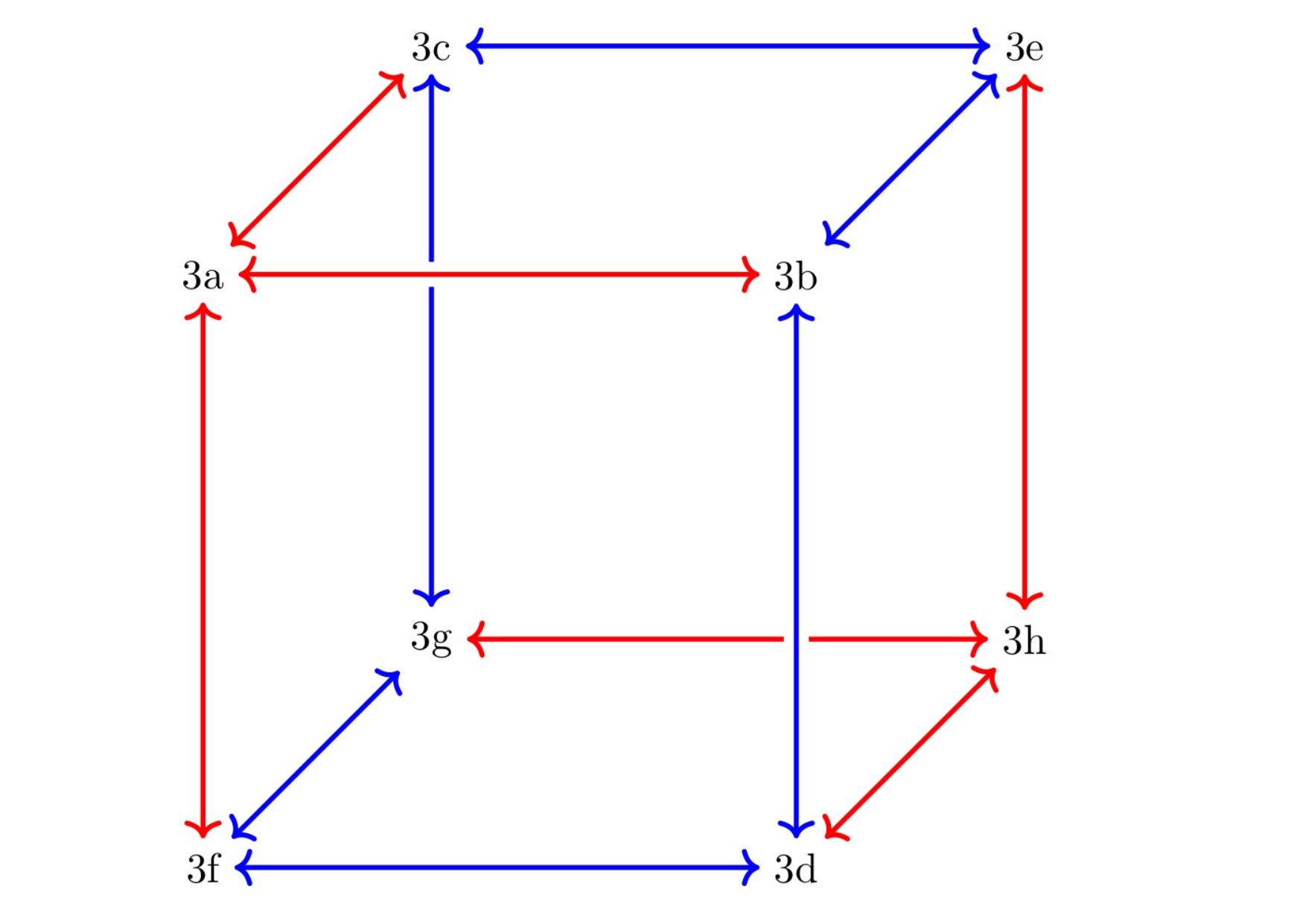}
\end{center}

Proposition~\ref{3ast-all2} is proved by Lemma~\ref{Generating3}.

\subsection{Proof of Theorem~\ref{Thm:5elements}}
In this section, we prove Theorem~\ref{Thm:5elements}. 
Since no set with six or more elements can be minimal, 
and since Theorem~\ref{Extend-of-Polyak}
(which forces the presence of 2c and 2d) 
together with Lemma~\ref{[Pol]3.1}
(necessity of type I moves) 
implies 
that any five- elements minimal generating set must consist of exactly one  \tIII~move, two \tII~moves, and two \tI~moves.

\begin{proof}
    It follows from Theorem~\ref{Extend-of-Polyak} that any minimal generating set containing a braid-\III~move has five elements.
    Hence, in the non-braid-\tIII~case, we prove below that no minimal generating set has six or more elements.
    Suppose, for a contradiction, that there exists a minimal generating set $S \in MGS_a$ that has six or more elements. By Lemma~\ref{[Pol]3.1}, $S$ contains a compatible pair $1_{\ast},1_{\sharp}$ of type I moves and suppose that 
    $\{1_{\ast},1_{	\sharp} \} \neq \{\oneC,\oneD\} $ (cf.~{\bf Open~Problem} in Section~\ref{Intro}). 
    We distinguish cases according to which $\tII$ moves lie in $S$.
    \begin{itemize}
        \item[(i)]
             If $\twoA \in S$, then $\{\threeA, \twoA, \oneAS, \oneSH \}\subset S$ is (known to be) a four-elements minimal generating set, contradicting the six-or-more-elements minimality of $S$.
        \item[(ii)] 
             If $\twoB \in S$, the same argument as in (i) applies with $2b$ in place of $2a$.
        \item[(iii)]
             If $\twoC \in S$ while $\twoA, \twoB \notin S$
             \begin{itemize}
                \item[(A)]
                     If $\twoD \in S$, then $ \{\threeA, \twoC, \twoD, \oneAS, \oneSH \} \subset S$ is a five-element minimal generating set (Theorem~\ref{Extend-of-Polyak}), again contradicting the minimality of $S$.
                \item[(B)]
                    If $\twoD \notin S$, then by Theorem~\ref{prop:cdAlone} the set $S$ is not generating, a contradiction.
             \end{itemize}
        \item[(iv)]
             If $\twoD \in S$ while $\twoA, \twoB \notin S$, the argument is symmetric to (iii).
    \end{itemize}
    These contradictions show that no minimal generating set can have six or more elements. The same proof works with $\threeH$ in place of $\threeA$. 
\end{proof}

\subsection{Non-generating oriented Reidemeister moves}\label{sec:nonGen} 
The objective of this section is to prove Theorems~\ref{prop:cdAlone} and~\ref{Extend-of-Polyak}.  
\begin{defn}\label{def:monoid}
Let $\alpha$ be a finite set and the free  monoid $\pi(\alpha)$ generated by $\alpha$  consists of all finite words over $\alpha$,   with the identity  $\emptyset$, and  multiplication  by  concatenation.  
\end{defn}
\begin{defn}
Let $D_L$ be a two-component link diagram with an ordering of its components and a base point on each component.   
Let $c^1_2(D_L)$ denote the totally ordered set of crossings where the over-path belongs to the first component and the under-path belongs to the second component, with the order given as $c_1, c_2, \dots, c_{\ell}$ by following the orientation of the first component starting from its base point.   
\end{defn}
\begin{defn}
Let $\alpha_* = \{ a_+, b_- \}$.  
Define $p : c^1_2(D_L) \to \alpha_*$ by  mapping  crossings
\[
\ap, \quad \bm
\]
to $a_+$, $b_-$, respectively.
\end{defn}
\begin{ex}
For the link diagram shown in Figure~\ref{fig:Hopf}, $c^1_2(D_L)$ consists of three crossings and $c^1_2 (D_L)=\{A, C, D\}$.
For the link diagram shown in Figure~\ref{fig:DLone}, $c^1_2(D_L)$ consists of two crossings. Going along the orientation from the base point of the first component, the image of $p$ of the first crossing is $b_-$ and the second one is $a_+$.    
    
\begin{figure}
    \includegraphics[width=0.5\linewidth]{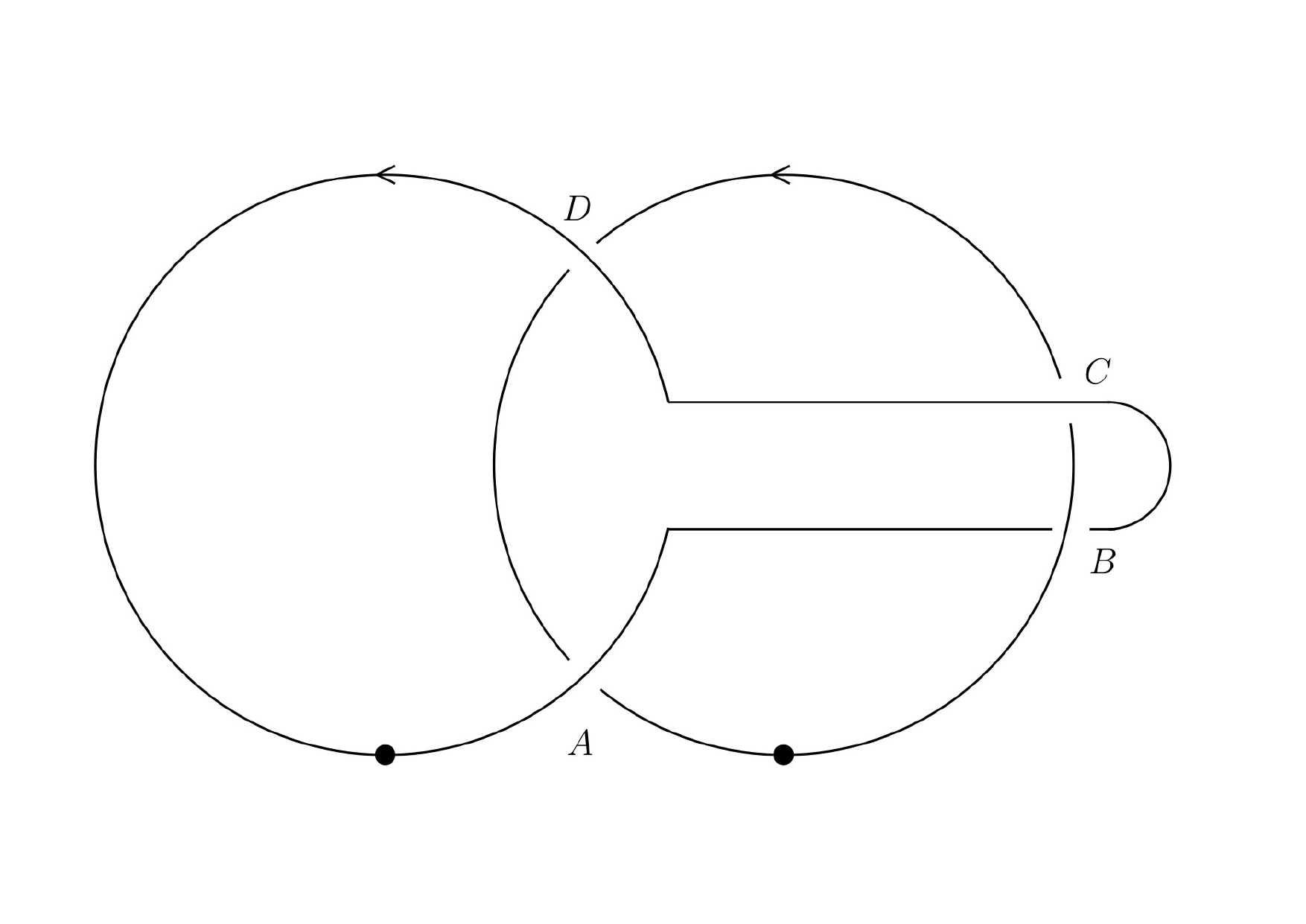}
    \caption{An ordered oriented link of two-components, each of which has a base point.
    We take the left component to be the first component.  
    }
\label{fig:Hopf}
\end{figure}
\end{ex} 
Let $c_1,\dots, c_{\ell}$ be the ordered crossings in $c^1_2(D_L)$.  
Then we define $f(D_L)$ by   
\begin{equation}\label{eq:bracket}
f(D_L) \coloneq p(c_1)p(c_2) \dots p(c_{\ell}) \in \pi(\alpha_*). 
\end{equation} 
\begin{ex}
For $L$ as in Figure~\ref{fig:Hopf}, reading  along the orientation of the first component (the left one) from the base point  gives the sequence of crossings $A, B, C, D$, and $c^1_2(D_L) =\{A, C, D\}$.  Under the mapping $p$, we have $p(A)=b_-$, $p(C)=b_-$, and $p(D)=a_+$.  Hence, 
$f(D_L)= b_- b_- a_+ $.  
\end{ex}
\begin{lem}\label{lem:GDF2c}
Given a $D_L$,  the image $f(D_L)$ does not change  under \tI~and~\tIII~ moves.    
\end{lem}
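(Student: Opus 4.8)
The plan is to prove the invariance one elementary move at a time, exploiting that $f(D_L)$ is read off \emph{locally}: every letter records a crossing at which the first component runs over the second, and the letters are listed in the order in which they are met while traversing the first component from its basepoint. An \tI~or \tIII~move is supported in a disk $\Delta$ outside of which the diagram is untouched, and the move fixes both the subword contributed by the part of the first component lying outside $\Delta$ and the points at which the first component enters and leaves $\Delta$. Hence it suffices to compare, in order, the letters contributed by $\Delta$ before and after the move.

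For \tI~the argument is immediate. The crossing created or destroyed is a \emph{self}-crossing of a single component, so it never belongs to $c^1_2(D_L)$; the move therefore neither inserts, deletes, nor transposes any letter of $f(D_L)$, and $f(D_L)$ is unchanged.

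For \tIII~I would first isolate the two structural facts that make the check finite. A \tIII~move sends its three crossings to three crossings with the same over/under strands and the same signs, so the \emph{multiset} of letters contributed by $\Delta$, with their values in $\{a_+,b_-\}$, is preserved; only their \emph{order} along the first component is in question. I would then split into cases according to how the three strands meeting $\Delta$ are distributed between the two components. Writing the three crossings as the pairs (top/middle), (top/bottom), (middle/bottom), one checks that the only arc of the first component that can carry two distinct $c^1_2(D_L)$-crossings is the top strand, and only when the top strand lies on the first component while the other two strands both lie on the second. In every other case at most one crossing of $\Delta$ lies in $c^1_2(D_L)$, so no transposition is possible and $f(D_L)$ is clearly preserved.

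The main obstacle is precisely this remaining subcase: when one strand of the first component passes over the other two, both of which belong to the second component. Sliding that over-strand across the opposite crossing exchanges, along the first component, the two $c^1_2(D_L)$-crossings it carries, so the lemma reduces to verifying that these two exchanged crossings always receive the \emph{same} symbol in $\{a_+,b_-\}$. This is exactly the point at which the definition of $p$, i.e.\ the precise oriented pictures labelled $a_+$ and $b_-$, is used, and I would confirm it by reading $f$ directly off the oriented diagrams of the eight moves $\threeA,\dots,\threeH$. Once the equality of the two labels is checked for each oriented \tIII~move, the word $f(D_L)$ is literally unchanged, and the lemma follows.
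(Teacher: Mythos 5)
Your overall architecture is sound, and it is in fact more careful than the paper's own proof. The paper disposes of \tI~moves and intra-component \tIII~moves exactly as you do, but for \tIII~moves involving both components it only remarks that the move preserves the type of each crossing and points to a Gauss-diagram figure; it never discusses the fact that the move reverses the order of the two crossings along each strand, which is the only mechanism by which the word could change. You isolate this correctly: the dangerous case is a strand of the first component passing over two strands of the second, and there the move transposes two letters of $f(D_L)$. (One small slip on the way: in the case where the first component contains the top and middle strands and the second component the bottom strand, \emph{two} crossings of the changing disk lie in $c^1_2(D_L)$, not at most one; your conclusion still holds because they lie on different arcs of the first component, whose relative order from the base point is determined outside the disk.)

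The genuine gap is your last step. You defer the verification that the two transposed crossings always carry the same letter to ``reading it off the eight diagrams,'' but that check is the entire content of the lemma, and as announced it does not go through. Whatever the two pictures defining $p$ are, they partition the crossings of $c^1_2(D_L)$ into the two sign classes, and the sign of a crossing with the first component on top is the sign of $\det(u,v)$, where $u$ is the direction of the over-strand and $v$ that of the under-strand. Now consider an oriented \tIII~move in which the two strands passing under the top strand cross it from opposite sides, i.e.\ $\det(u,v)>0>\det(u,w)$; the cyclically oriented (non-braid) moves have exactly this pattern with respect to their top strand, and some braid-type orientation patterns do as well. If that top strand lies on the first component and the two under-strands on the second, the move exchanges one $a_+$ and one $b_-$ along the first component, so the word changes by $b_- a_+ \leftrightarrow a_+ b_-$. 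Hence the equality of labels you postpone is not a routine finite verification: for these configurations it fails as stated, and a complete proof must confront them explicitly --- either by showing such configurations are somehow excluded (nothing in your reduction, nor in the paper's argument, does this) or by weakening the invariance being claimed. Until then your proof is open at exactly its load-bearing step, which is also precisely the point the paper's own proof passes over with ``it is clear to see.''
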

\begin{proof}
First, since $f(D_L)$ does not respond to self-intersections, it is invariant under the first and the third Reidemeister moves within a single component.  
Second, any Reidemeister move involving three components does not appear in two-component links.  Thus, what remains to be shown is that, among the third Reidemeister moves involving two components, we focus on the word consisting of a concatenation formed by reading the projections of $c^1_2 (D_L)$ between distinct components, from the base point of the first component.  

However, it is clear to see that the third Reidemeister move $\threeA$, $\threeB$, $\threeC$, $\threeD$, $\threeE$, $\threeF$, $\threeG$, or $\threeH$ does not change the crossing types $a_+, a_-, b_+, b_-$ between distinct components.  If the reader is familiar with Gauss diagrams, one will immediately understand this from 
\[
\includegraphics[width=0.45\linewidth]{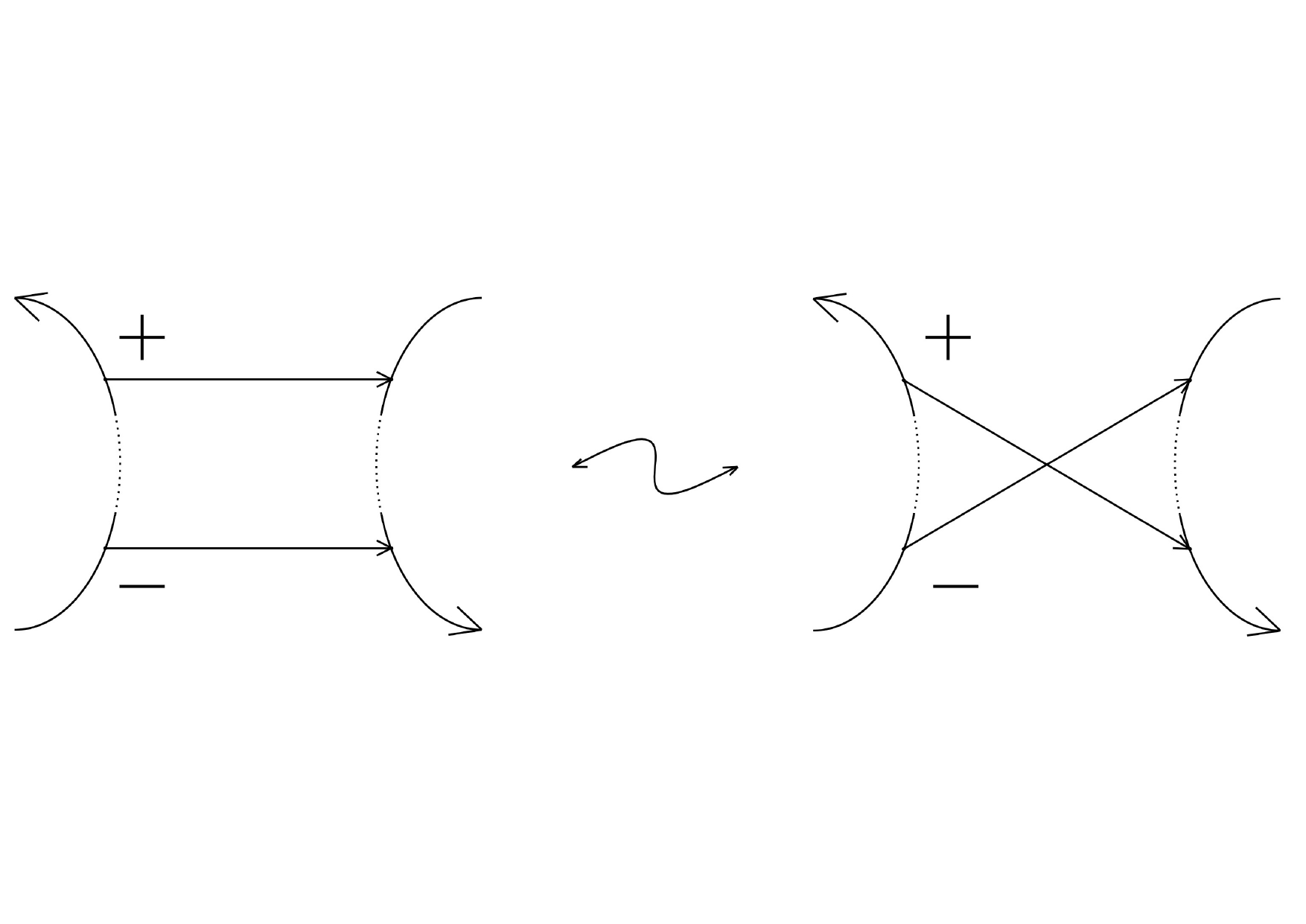}.
\]
Hence, the word does not change under \tIII~between two components, which implies the claim. 
\end{proof}

Every $(2, 2)$-tangle diagram will be a link diagram with $S^1$-boundary including four marked points, where some strands may intersect transversely.    
A simple closure  connects the  four marked endpoints in a fixed way to obtain a $2$-component link diagram, where each component is equipped with exactly one base point.  In this paper, we always take this simple closure so that the result is a $2$-component link.  The procedure defines an injective map from the set of $(2, 2)$-tangles to the set of such $2$-component links  with two base points.  In the above, we defined the invariant $f$ of these based two-component links, thereby obtaining invariants of $(2,2)$-tangles. 

\begin{rem}\label{rem:localGS}
The approach of Polyak \cite{Polyak2010} implicitly uses the principle that if a set is non-generating set of tangles, then it is non-generating set.   The argument in this section follows the same local-to-global logic explicitly.   
Concretely, Polyak constructed functions based on certain colorings, and we note that, following his convention, a set of Reidemeister moves is called a \emph{generating set} only when it generates all moves \emph{locally}.  Therefore, it is sufficient to show non-generating results using based and ordered two-component links or tangles.   
\end{rem}

\begin{figure}[h]
    \includegraphics[width=0.35\linewidth]{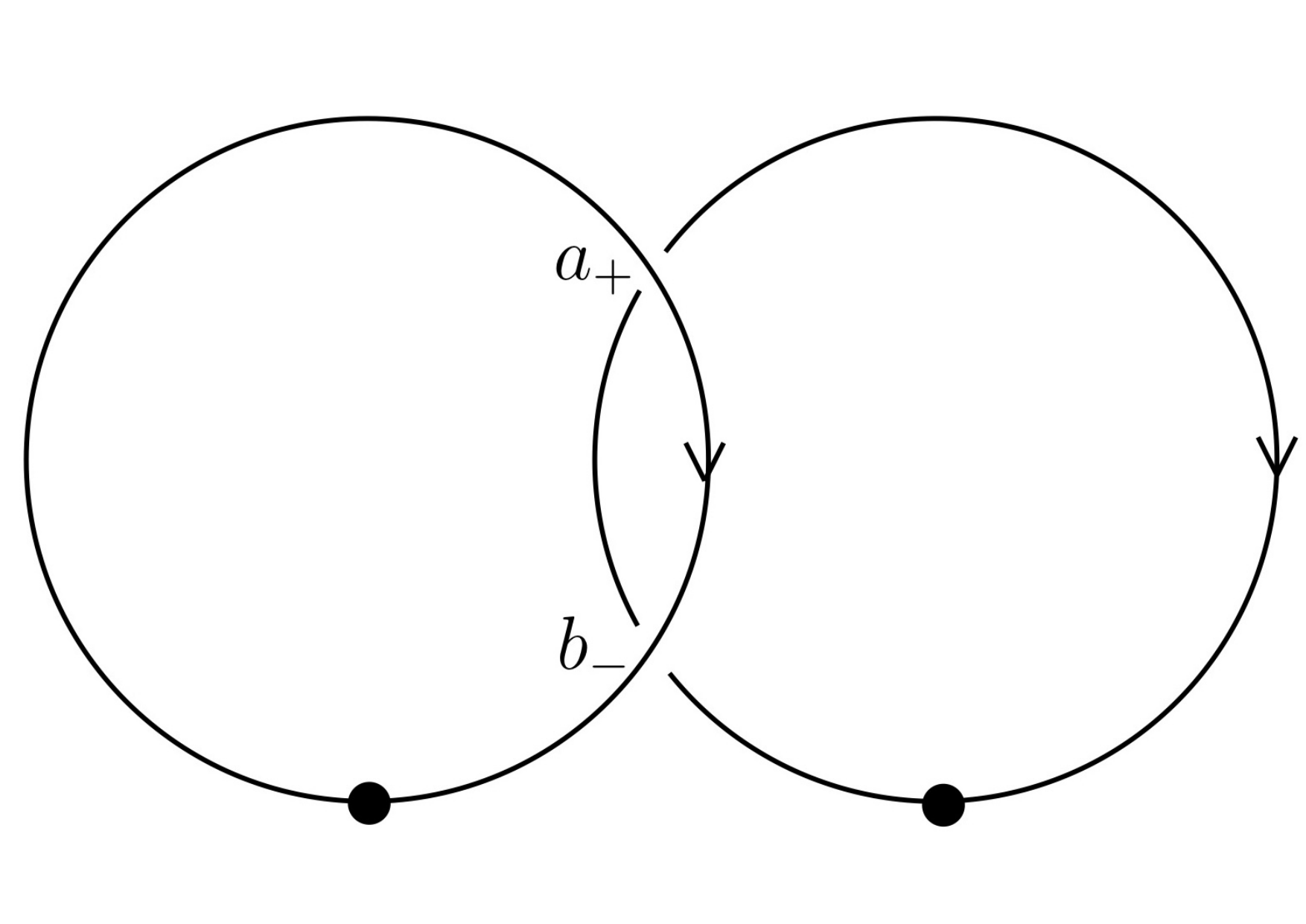}
    \caption{The link diagram $D_{L_1}$.  The left component represents the first component.}
    \label{fig:DLone}
\end{figure}

\begin{prop}\label{2cprop}
There exists a trivial two-component link, with a base point on each component, such that the number of crossings cannot be reduced to zero using all  of $\R I$, $\R \III$, and the move $\twoC$ alone. Here, the Reidemeister moves are allowed to be applied away from the basepoint.    
\end{prop}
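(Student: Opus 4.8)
The plan is to track the word-valued invariant $f$ of~\eqref{eq:bracket} and to show that, after quotienting the free monoid $\pi(\alpha_*)$ by the single relation a $\twoC$ move is able to impose, the class of $f(D_L)$ becomes a genuine invariant under $\R I$, $\R \III$, and $\twoC$ that separates the diagram $D_{L_1}$ of Figure~\ref{fig:DLone} from every zero-crossing diagram. By Lemma~\ref{lem:GDF2c} the word $f(D_L)$ is already unchanged by every move of $\tI$ and $\tIII$, applied away from the basepoints, so the only thing left to understand is the behaviour of $f$ under a single $\twoC$ move.

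First I would analyse how $\twoC$ acts on $f$, splitting into cases according to the components involved. If the move is supported on a single component, it only creates or destroys self-crossings, which $c^1_2(D_L)$ ignores, so $f$ is unchanged. If the two arcs of the move lie on different components but the over-arc lies on the second component, then the two new crossings have their over-arc on the second component and hence do not lie in $c^1_2(D_L)$, so again $f$ is unchanged. The only remaining case is a $\twoC$ move between the two components whose over-arc lies on the first component; here both new crossings lie in $c^1_2(D_L)$ and appear consecutively as one reads along the first component. The key geometric claim is that, reading in the direction of the orientation, this pair is always the contiguous block $a_+\,b_-$, and never $b_-\,a_+$: the two crossings of an $\R \II$ move carry opposite local rotation senses, which fixes one of them as the $a_+$-type and the other as the $b_-$-type, while the specific oriented picture of $\twoC$ (as opposed to $\twoD$) fixes their order. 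Consequently a $\twoC$ move either inserts or deletes the block $a_+ b_-$ in $f(D_L)$, and does nothing else.

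I would then pass to the quotient monoid $B \coloneq \langle a_+, b_- \mid a_+ b_- = \emptyset \rangle$, i.e.\ the bicyclic monoid, and consider $\bar f(D_L)$, the image of $f(D_L)$ in $B$. By the previous paragraph $\bar f$ is invariant under $\R I$, $\R \III$, and $\twoC$: all $\tI$ and $\tIII$ moves fix $f$ itself, and the block $a_+ b_-$ inserted or deleted by $\twoC$ is trivial in $B$. Every element of $B$ has a unique normal form $b_-^{\,m} a_+^{\,n}$; in particular $f(D_{L_1}) = b_- a_+$ is already in normal form with $m = n = 1$, so $\bar f(D_{L_1}) = b_- a_+ \neq \emptyset$. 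On the other hand, any diagram with no crossings has $c^1_2(D_L) = \emptyset$, hence $f = \emptyset$ and $\bar f = \emptyset$. If the crossings of $D_{L_1}$ could be reduced to zero using only $\R I$, $\R \III$, and $\twoC$, then $\bar f$ would take the same value on $D_{L_1}$ and on the resulting zero-crossing diagram, forcing $b_- a_+ = \emptyset$ in $B$, a contradiction. Equivalently, and without invoking $B$, one may simply cancel every contiguous $a_+ b_-$ in $f$; this rewriting is confluent, the reduced word is a $\twoC$-invariant, and the reduced form of $b_- a_+$ is the nonempty word $b_- a_+$ itself.

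The step I expect to be the main obstacle is the geometric claim of the second paragraph: verifying from the oriented pictures of Definition~\ref{def:symbolRM} that a $\twoC$ move between the two components, with the over-arc on the first, always contributes the \emph{ordered} block $a_+ b_-$ to $f$ and never its reverse $b_- a_+$. Everything else is formal, but this is precisely the point that distinguishes $\twoC$ from $\twoD$ and makes the restricted move set too weak; it is most safely checked by following the orientations through both crossings of the bigon, conveniently via the Gauss-diagram picture already used in the proof of Lemma~\ref{lem:GDF2c}.
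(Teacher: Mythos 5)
Your proposal is correct, and at its core it is the same argument as the paper's: the same invariant $f$ of Lemma~\ref{lem:GDF2c}, the same reduction showing that among the allowed moves only a $\twoC$ between the two components with over-arc on the first can change $f$, and that it does so only by inserting or deleting one fixed contiguous two-letter block, so the class of $f(D_{L_1})$ modulo that block is a nonempty obstruction. Where you genuinely diverge is in the finish. The paper argues by hand: it enumerates the possible positions (1)--(4) of inserted blocks relative to the original pair, observes that boundary-adjacent letters align as $aa$ or $bb$, and concludes by induction on the number of moves that no sequence of insertions and deletions empties the word --- an argument whose cases (3) and (4) are stated somewhat informally. You instead pass to the quotient monoid $\langle a_+, b_- \mid a_+b_- = \emptyset\rangle$ (the bicyclic monoid) and invoke uniqueness of normal forms, equivalently confluence and termination of the rewriting $a_+b_- \to \emptyset$; this packages the entire positional analysis into one standard algebraic fact and is both shorter and more robust, since it handles arbitrary interleavings of insertions and deletions without any induction. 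One detail worth flagging: your convention (the $\twoC$-block is $a_+b_-$, and $f(D_{L_1}) = b_-a_+$) agrees with the paper's own Example computing $f$ for Figure~\ref{fig:DLone}, whereas the paper's proof text uses the reversed labels (original pair $a_+b_-$, block $U = b_-a_+$) --- the two passages of the paper are inconsistent with each other, presumably a typo, and the argument is symmetric under reversing both assignments, so your version is internally consistent. You were right to single out the orientation check (that $\twoC$, as opposed to $\twoD$, always contributes the block in one fixed order, with the two new letters consecutive along the first component) as the one picture-dependent step; that is exactly the geometric content the paper also relies on, verified there via the Gauss-diagram observation in the proof of Lemma~\ref{lem:GDF2c}.
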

\begin{proof}
Under our assumption, among moves of \tII, only $\twoC$ is allowed. 
Therefore, by Lemma~\ref{lem:GDF2c}, $\twoC$ is the only move that can change the value of $f$.
Further, $\twoC$ merely creates or annihilates the length-two block
\[
U:=  b_- a_+.  
\]
Assume for contradiction that the set $\R I \cup \{ \twoC \} \cup \R \III $ is a generating set.   Denote this assumption by ($\spadesuit$).  Under the assumption ($\spadesuit$), for the base-pointed trivial link diagram $D_{L_1}$ as in Figure~\ref{fig:DLone}, we would have $f(D_{L_1})=\emptyset$.  Denote this formula by ($\star$).

Fix a base-pointed trivial link diagram $D_{L_1}$ with exactly two crossings, and call them the \emph{original pair}.  The word obtained from the projections of two letters corresponding to the original pair is 
\[
a_+ b_- . 
\]

Then we apply elements of $\R I \cup \{ \twoC \} \cup \R \III $ in an arbitrary sequence.  Since only $\twoC$ affects $f$, and it does so solely by inserting or erasing copies of $U$, the image in the target of $f$ can differ from the original block $a_+ b_-$ only by placing $U$ to the left or right of $a_+ b_-$, or by inserting $U$ between the two original letters.  As a result, up to further insertions of $U$, the possibilities we must consider are:
\begin{enumerate}
\item[(1)] $U(a_+ b_-)$, 
\item[(2)] $(a_+ b_-)U$, 
\item[(3)] $\cdots a_+ U \cdots b_- \cdots$,
\item[(4)] $\cdots a_+ \cdots U  b_- \cdots$.
\end{enumerate}
where ``$\cdots$'' denotes a word generated by subsequent applications of $\twoC$.   

In cases (1) and (2), the letters adjacent across the boundary with the original pair align as $aa$ or $bb$; therefore, no cancellation toward the empty word occurs  there, and the value of $f$ does not become zero.  

It remains to treat (3) and (4), where a copy of $U= b_- a_+$ is placed between the two letters of the original pair.  The key is that the $m$ newborn pairs obtains sequence $(a_+ b_-)^{m+1}$ for a positive integer $m$. 
Hence each letter ($a_+$ or $b_-$) of the original pair cannot cancel against a letter belonging to the newly created $U$ via a $\twoC$ that adds or removes $b_- a_+$.  Consequently, at least $a_+$ ($=$ the first letter) or $b_-$ ($=$ the last letter) belonging to the original pair survives, thus $f(D_{L_1}) \neq \emptyset$.   

Therefore, no matter how many copies of $U$ are inserted or removed, the value of $f$ never becomes zero.  By induction on the number of applications  Reidemeister moves in $\R I \cup \{ \twoC \} \cup \R \III $, we conclude that $f(D_{L_1}) \neq \emptyset$, which contradicts ($\star$) under assumption ($\spadesuit$).  This contradiction implies the statement of this proposition.  
\end{proof}
\begin{prop}\label{2dprop}
There exists a trivial two-component link, with a base point on each component, such that the number of crossings cannot be reduced to zero using all  of $\R I$, $\R \III$, and the move $\twoD$ alone. Here, the Reidemeister moves are allowed to be applied away from the basepoint.    
\end{prop}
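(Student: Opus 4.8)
The plan is to replicate the proof of Proposition~\ref{2cprop} essentially verbatim, with $\twoD$ taking the role previously played by $\twoC$. The first step is to record, by the same Gauss-diagram bookkeeping that underlies Lemma~\ref{lem:GDF2c}, how the unique admissible $\tII$ move $\twoD$ acts on the relevant word invariant. I expect that $\twoD$ changes the value of $f$ and does so only by inserting or deleting the reversed block $V := a_+ b_-$, rather than the block $U = b_- a_+$ produced by $\twoC$; should it instead turn out that $\twoD$ fixes $f$, one reads off the crossings of $c^2_1(D_L)$ through the same alphabet to obtain a companion invariant $g$ carrying the obstruction. In either case the relevant invariant is left unchanged by $\R I$ and $\R \III$, by the identical argument of Lemma~\ref{lem:GDF2c}.

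Granting the block, I would next fix a base-pointed trivial two-component link $D_{L_2}$ --- the left--right reflection (equivalently, the component swap) of the diagram $D_{L_1}$ of Figure~\ref{fig:DLone} --- with exactly two inter-component crossings whose associated word is $b_- a_+$, the reverse of the word $a_+ b_-$ used in Proposition~\ref{2cprop}. Under the contradiction hypothesis $(\spadesuit)$ that $\R I \cup \{\twoD\} \cup \R \III$ is a generating set, reducing $D_{L_2}$ to the crossingless diagram would force its word to equal $\emptyset$.

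The combinatorial heart is then the word-reversed copy of the $\twoC$ argument. Since the only active move inserts or erases the fixed block $a_+ b_-$, the reachable words lie in the one-relator monoid $\langle a_+, b_- \mid a_+ b_- = \emptyset \rangle$, which is the bicyclic monoid; in it the relation $a_+ b_- = \emptyset$ does not entail $b_- a_+ = \emptyset$. Placing a copy of $V$ on either flank of the original pair forces a forbidden $a_+ a_+$ or $b_- b_-$ adjacency, while inserting it between the two original letters only lengthens $b_- a_+$ to $b_- a_+ b_- a_+$ without ever exposing the outer $b_- \cdots a_+$ to the single admissible cancellation. Hence the word of $D_{L_2}$ remains equal to $b_- a_+ \neq \emptyset$ under every admissible sequence, contradicting $(\spadesuit)$ and establishing the claim.

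I expect the only genuine obstacle to be the first step: pinning down precisely which length-two block $\twoD$ inserts and deletes, and thus whether the obstruction is carried by $f$ or by its $c^2_1$-companion. This is a finite check on the oriented local pictures, equivalently a check that the planar symmetry carrying $\twoC$ to $\twoD$ reverses the reading order along the first component. Once this is settled the rest is automatic; indeed, applying that same symmetry directly to $D_{L_1}$ sends the move set $\R I \cup \{\twoC\} \cup \R \III$ to $\R I \cup \{\twoD\} \cup \R \III$ and the link $D_{L_1}$ to $D_{L_2}$, so the statement follows formally from Proposition~\ref{2cprop}.
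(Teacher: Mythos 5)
Your proposal is correct and takes essentially the same route as the paper: the paper's own proof of Proposition~\ref{2dprop} is precisely the substitution scheme you describe ($D_{L_1}\mapsto D_{L_2}$, $\twoC\mapsto\twoD$, original pair $a_+b_-\mapsto b_-a_+$, inserted block $U\mapsto V=a_+b_-$), with the details left to the reader, and your bicyclic-monoid packaging of the cancellation step (invariance of the image of $f$ in $\langle a_+,b_-\mid a_+b_-=\emptyset\rangle$, where $b_-a_+\neq\emptyset$) is just a tidier rendering of the paper's case analysis in Proposition~\ref{2cprop}. One small correction: drop the parenthetical ``equivalently, the component swap,'' since relabelling the components would make $c^1_2(D_{L_2})$ empty and kill $f$ outright (forcing your fallback invariant $g$ on $c^2_1$), whereas the left--right reflection works directly because it preserves over/under data and the reading order while exchanging the letters $a_+\leftrightarrow b_-$.
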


\begin{proof}
The following transformation rules allow us to obtain the proof of Proposition~\ref{2dprop} from Proposition~\ref{2cprop}.   

\begin{itemize}
\item The link diagram: $D_{L_1}$ (Figure~\ref{fig:DLone}) $\to$ $D_{L_2}$ (Figure~\ref{fig:DLtwo})
\item The allowed move of \tII ~: \twoC $\to$ \twoD
\item The original pair: $a_+b- \to b_-a+$ 
\item The block created by applying allowed move : $U$ $\to$ $V$
\end{itemize}
The details are left for the readers.
\end{proof}

\begin{figure}[h]
    \includegraphics[width=0.36\linewidth]{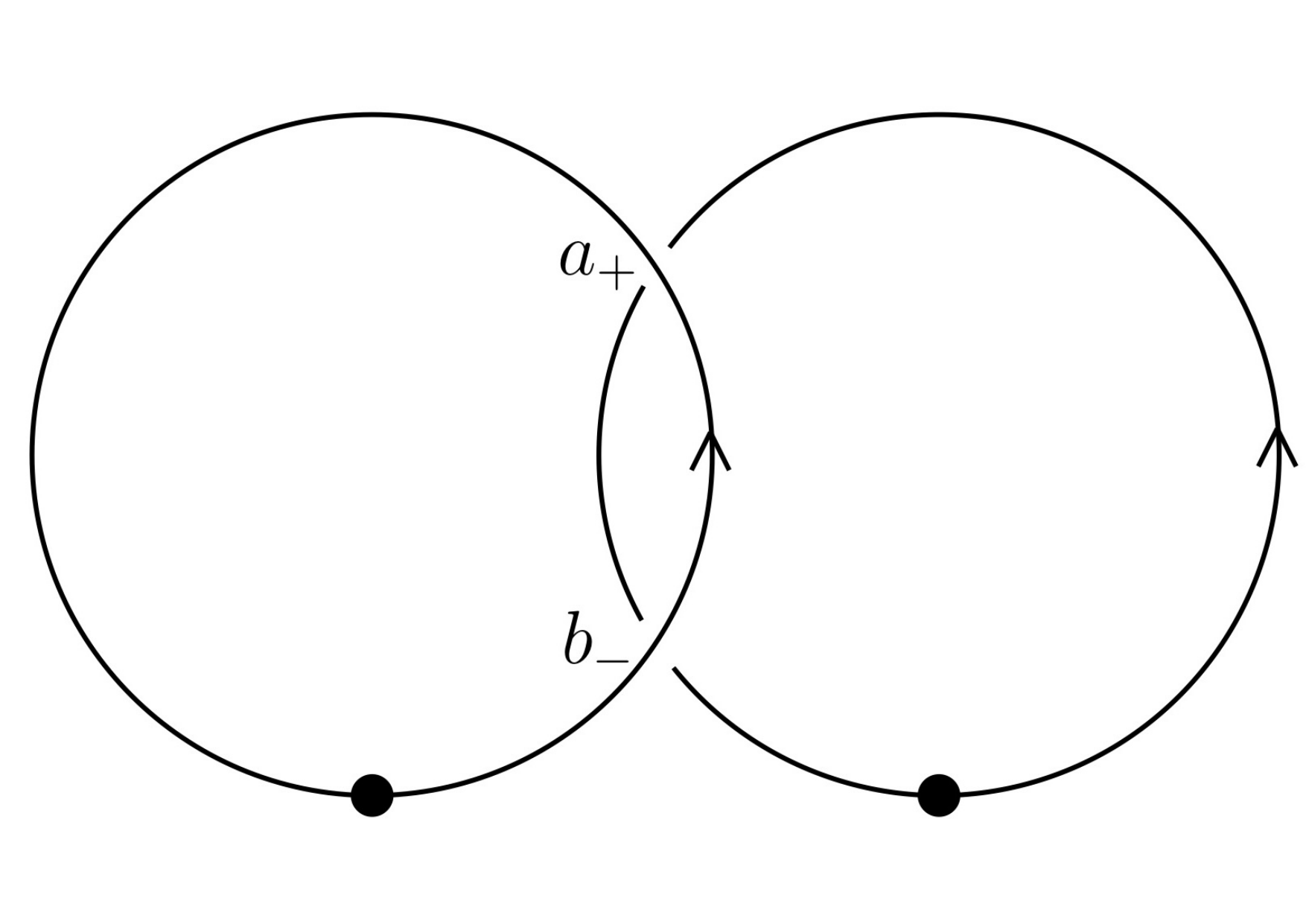}
    \caption{The link diagram $D_{L_2}$.  The left component represents the first component.}
    \label{fig:DLtwo}
\end{figure}

Based on Remark~\ref{rem:localGS}, Propositions~\ref{2cprop} and \ref{2dprop} imply  Corollary~\ref{cor:2c}.
\begin{cor}\label{cor:2c}
Neither of the sets 
$
\R I \cup  \{\twoC\} \cup \R \III
$
nor  
$
\R I \cup \{\twoD \} \cup \R \III
$
is a generating set.  
\end{cor}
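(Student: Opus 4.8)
The plan is to obtain the corollary by contraposition, reading Propositions~\ref{2cprop} and~\ref{2dprop} through the local-to-global principle recorded in Remark~\ref{rem:localGS}. The point to exploit is that the move-set appearing in Proposition~\ref{2cprop}, namely $\R I$ together with $\R \III$ and $\twoC$, is \emph{literally} the set $\R I \cup \{\twoC\} \cup \R \III$ whose generating property is in question; the same holds for $\twoD$ and Proposition~\ref{2dprop}. So once I install the bridge ``generating set $\Rightarrow$ trivializes every trivial-link diagram,'' the contradiction is immediate. The conceptual content of that bridge is that a generating set, by definition, realizes every Reidemeister move as a finite composite of its own moves inside changing disks; hence the diagram equivalence it induces coincides with full Reidemeister equivalence, and in particular any diagram of a trivial link must be reducible to the crossingless diagram using only moves from the set.

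Concretely, I would argue as follows. Suppose toward a contradiction that $\R I \cup \{\twoC\} \cup \R \III$ is a generating set, and consider the based trivial two-component link diagram $D_{L_1}$ of Figure~\ref{fig:DLone}. By Reidemeister's theorem $D_{L_1}$ is connected to the crossingless diagram of the trivial link by some finite sequence of oriented Reidemeister moves, and by the generating hypothesis each move in that sequence is itself a composite of moves drawn from $\R I \cup \{\twoC\} \cup \R \III$, applied away from the base points as permitted. Splicing these composites exhibits a reduction of $D_{L_1}$ to zero crossings using only $\R I$, $\R \III$, and $\twoC$, which is exactly what Proposition~\ref{2cprop} forbids; the contradiction shows $\R I \cup \{\twoC\} \cup \R \III$ is not generating. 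Substituting $D_{L_2}$ (Figure~\ref{fig:DLtwo}) for $D_{L_1}$ and $\twoD$ for $\twoC$, and invoking Proposition~\ref{2dprop} in place of Proposition~\ref{2cprop}, yields the same conclusion for $\R I \cup \{\twoD\} \cup \R \III$.

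The one step I expect to require care, and which I would make explicit rather than treat as automatic, is the passage from ``generates every move'' to ``trivializes every trivial-link diagram.'' The definition of a generating set speaks only about reproducing the individual moves of $\R$ locally, not about reducing the crossing number of a given diagram, so the implication must be routed through Remark~\ref{rem:localGS}: because the generating requirement is local, a single based ordered two-component diagram that resists reduction is already a genuine obstruction to the generating property. This is precisely the local-to-global logic the section has been set up to supply, so beyond citing it carefully no further work is needed, and the corollary follows.
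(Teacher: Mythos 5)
Your proposal is correct and takes essentially the same route as the paper: the paper's proof of Corollary~\ref{cor:2c} is exactly the one-line deduction from Propositions~\ref{2cprop} and~\ref{2dprop} via the local-to-global principle of Remark~\ref{rem:localGS}, which is the bridge you identify and make explicit. Your elaboration (splicing the composites of generating moves along a Reidemeister sequence reducing $D_{L_1}$, resp.\ $D_{L_2}$, to the crossingless diagram) is just an unpacking of what the paper leaves implicit in citing that remark.
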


\noindent({\bf Proof of Theorem~\ref{prop:cdAlone}})
Corollaries~\ref{cor:2c} immediately the claim.  
\hfill$\Box$

\begin{defn}
    We call the operation on a crossing of a link diagram $D$, as illustrated in the following figure,  \emph{smoothing}. A link diagram $D^{\prime}$ in which all crossings of $D$ are smoothed is said to be \emph{smoothed}.
    \begin{center}
        \includegraphics[width=0.3\linewidth]{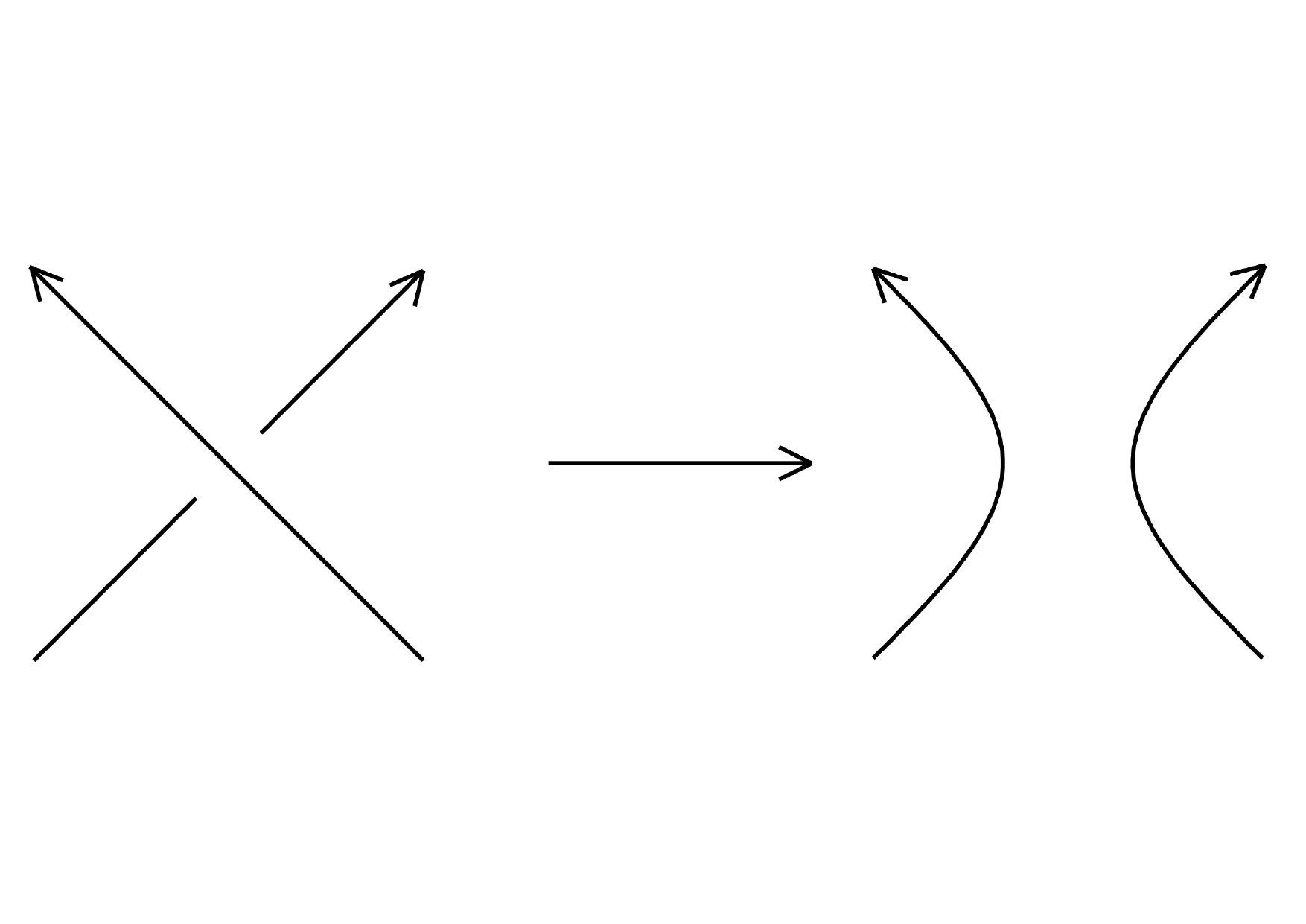}, 
        \includegraphics[width=0.3\linewidth]{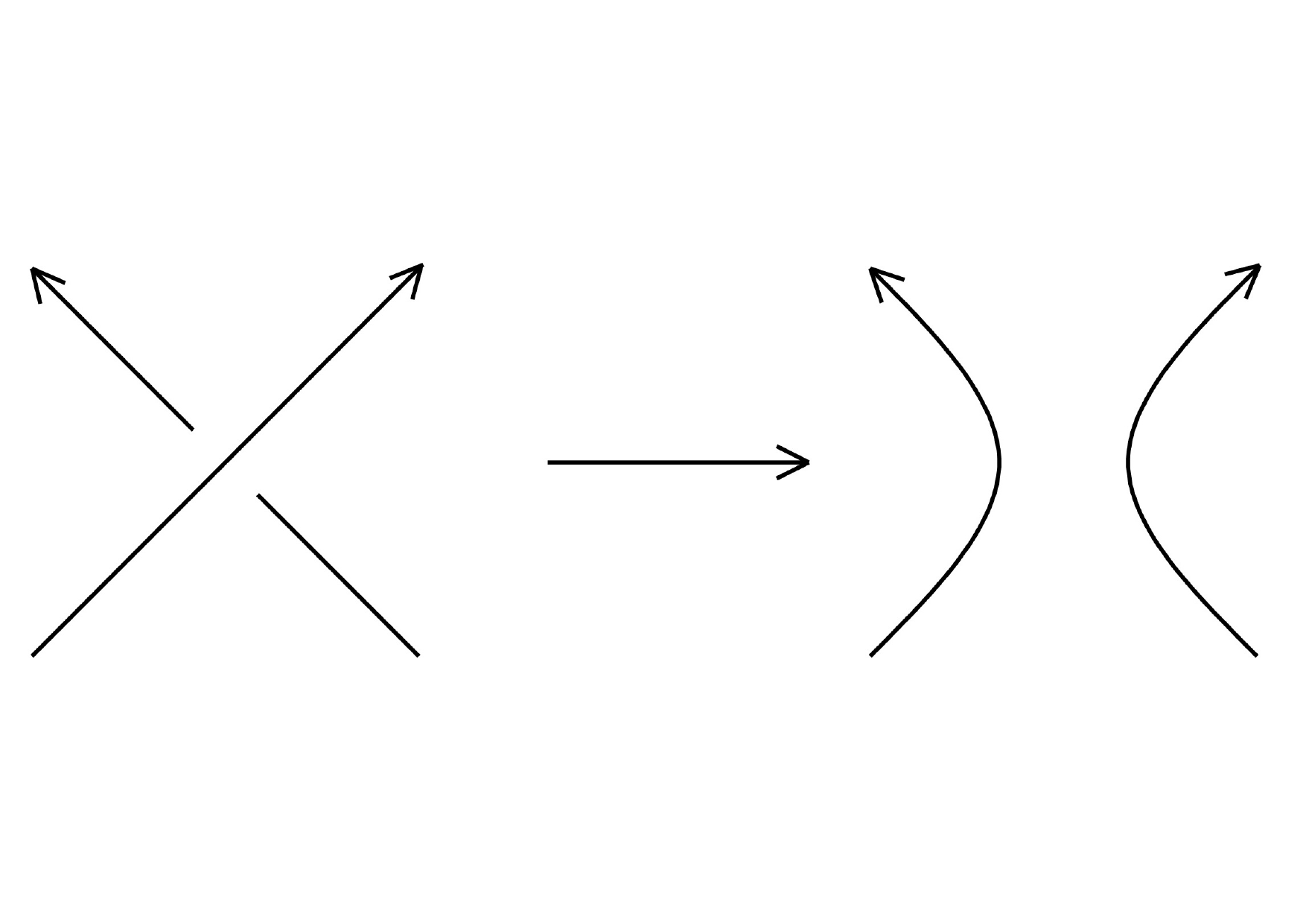}
    \end{center}
\end{defn}

\begin{prop}\label{braid&2cd}
For $\ast \in \{b, c, d, e, f, g\}$, the set $
\{\twoA, \twoB, \threeAS \} \cup CP 
$ 
does not form a generating set.  
\end{prop}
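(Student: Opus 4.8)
The plan is to reduce every case to Polyak's base case $\ast=b$, where the statement is already essentially contained in Theorem~\ref{thm:PolyakBraid}. Indeed, the set $\{\twoA,\twoB,\threeB\}\cup CP$ has type-III part equal to $\{\threeB\}$ and its type-I moves form the compatible pair $CP$, so it satisfies the hypotheses of Theorem~\ref{thm:PolyakBraid}; since it contains neither $\twoC$ nor $\twoD$, that theorem tells us directly that it is \emph{not} a generating set. Thus Proposition~\ref{braid&2cd} for $\ast=b$ is immediate, and the remaining work is to transfer this conclusion to the other braid-type moves $\ast\in\{c,d,e,f,g\}$.

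The key tool for the transfer is Lemma~\ref{Generating3}, together with the fact that the relation $\prec$ (``is generated by'') is a closure operator, hence transitive: if $m\prec T$ and every element of $T$ is generated by $T'$, then $m\prec T'$. I would first record the purely combinatorial observation that, among the generation rules of Lemma~\ref{Generating3}, the braid-type moves interconvert using only $\twoA$ and $\twoB$. Concretely, starting from $\threeB$ one has $\threeD\prec\{\twoA,\twoB,\threeB\}$ and $\threeE\prec\{\twoA,\twoB,\threeB\}$, then $\threeF\prec\{\twoA,\twoB,\threeD\}$ and $\threeC\prec\{\twoA,\twoB,\threeE\}$, and finally $\threeG\prec\{\twoA,\twoB,\threeF\}$; chaining these by transitivity yields $\threeAS\prec\{\twoA,\twoB,\threeB\}$ for every braid-type $\ast$.

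With this in hand the argument is a one-line inclusion. Fix a braid-type $\ast\in\{c,d,e,f,g\}$ and set $S=\{\twoA,\twoB,\threeAS\}\cup CP$. Each generator of $S$ lies in the closure of $\{\twoA,\twoB,\threeB\}\cup CP$: the moves $\twoA,\twoB$ and the pair $CP$ trivially, and $\threeAS$ by the chain just described. Hence every move generated by $S$ is already generated by $\{\twoA,\twoB,\threeB\}\cup CP$, i.e. the set of moves producible from $S$ is contained in the set producible from $\{\twoA,\twoB,\threeB\}\cup CP$. As the latter is not all of $\R$ (by the base case), neither is the former, and therefore $S$ is not a generating set.

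I expect the only real subtlety to be bookkeeping rather than mathematics: one must invoke \emph{only} Theorem~\ref{thm:PolyakBraid} and Lemma~\ref{Generating3}, and \emph{not} Theorem~\ref{Extend-of-Polyak}, since Proposition~\ref{braid&2cd} is itself an ingredient in the proof of that theorem. The two points to verify honestly are the connectivity claim—that every braid-type $\threeAS$ is reachable from $\threeB$ using the rules of Lemma~\ref{Generating3} with only $\twoA,\twoB$ as auxiliary moves—and that only the single inclusion $\langle S\rangle\subseteq\langle\{\twoA,\twoB,\threeB\}\cup CP\rangle$ is needed, so that the absent moves $\twoC,\twoD$ never enter the argument and no circularity arises.
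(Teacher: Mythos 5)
Your proof is correct, but it takes a genuinely different route from the paper's. You reduce everything to the base case $\ast=b$, disposed of by the ``only if'' direction of the cited Theorem~\ref{thm:PolyakBraid} (the set $\{\twoA,\twoB,\threeB\}\cup CP$ meets its hypotheses and lacks $\twoC,\twoD$), and then transfer to the other braid types along the chains $\threeD,\threeE\prec\{\twoA,\twoB,\threeB\}$, $\threeF\prec\{\twoA,\twoB,\threeD\}$, $\threeC\prec\{\twoA,\twoB,\threeE\}$, $\threeG\prec\{\twoA,\twoB,\threeF\}$ --- all of which check out against Lemma~\ref{Generating3}(11)--(15); your transitivity-of-$\prec$ step is the same composition-of-changing-disks principle the paper uses implicitly elsewhere, and your care to invoke only Theorem~\ref{thm:PolyakBraid} rather than Theorem~\ref{Extend-of-Polyak} (whose only-if part rests on this very proposition) correctly avoids circularity. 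The paper instead gives a self-contained invariant argument: smooth every crossing and count the clockwise and counterclockwise circles $C^-$, $C^+$ of the smoothed diagram together with the writhe $w$; the moves $\twoA$, $\twoB$, $\threeAS$ leave the smoothed diagram unchanged up to isotopy, and each of the four compatible pairs preserves one of the quantities $C^-$, $C^+$, $C^++C^--w$, $C^++C^-+w$, whereas $\twoD$ or $\twoC$ changes the relevant quantity, so the set cannot generate these moves. What each approach buys: yours is shorter and uniform in $CP$, but it outsources the hard content to the only-if half of Polyak's theorem, taken as a black box from the literature; the paper's proof is independent of that theorem --- fitting its stated aim of extending and in effect reproving Polyak's result via Theorem~\ref{Extend-of-Polyak} --- and it exhibits explicit conserved quantities that pinpoint exactly which moves ($\twoC$, $\twoD$) are unreachable, in the same spirit as the $f$-invariant used for Theorem~\ref{prop:cdAlone}.
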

\begin{proof}
When a given link diagram $D$ is smoothed, let $C^-$ and $C^+$ denote the number of clockwise and counterclockwise oriented circles, respectively.  The smoothed diagram does not change under moves $\twoA, \twoB, \threeAS$ up to plane isotopy. But the move $\twoC$ (resp. $\twoD$) can change the number $C^+$ (resp. $C^-$). Let $w$ be the writhe number of $D$.  
For any compatible pair, we observe that
\begin{enumerate}
\item \label{Case1b1d} Moves $\oneB$ and $\oneD$ preserve $C^-$,
\item \label{Case1a1c} Moves $\oneA$ and $\oneC$ preserve $C^+$, 
\item \label{Case1a1b} Moves $\oneA$ and  $\oneB$ does not change the number $C^+ + C^- - w$,
\item \label{Case1c1d}  Case~$\oneC, \oneD$ does not change the number $C^+ + C^- + w$.
\end{enumerate}
Then we consider each case.  
\begin{itemize}
\item Assume that $\{\twoA, \twoB, \threeAS \} \cup \{ \oneB, \oneD \}$ were a generating set. This means that any move in $\R$ does not change the number of $C^-$. But it contradicts the fact that $\twoD$ can change $C^-$.  
\item Assume that $\{\twoA, \twoB, \threeAS \} \cup \{ \oneA, \oneC \}$ were a generating set. This means that any move in $\R$ does not change the number of $C^+$.  But it contradicts the fact that $\twoC$ can change $C^+$.  
\item Assume that $\{\twoA, \twoB, \threeAS \} \cup \{ \oneA, \oneB \}$ were a generating set.  This means that any move in $\R$ does not change the number of $C^+ + C^- - w$. But it contradicts the fact that both $\twoC$ and $\twoD$ can change $C^+ + C^- - w$. 
\item Assume that $\{\twoA, \twoB, \threeAS \} \cup \{ \oneC, \oneD \}$ were a generating set. This means that any move in $\R$ does not change the number of $C^+ + C^- + w$. But it contradicts the fact that both $\twoC$ and $\twoD$ can change $C^+ + C^- + w$. 
\end{itemize}
Hence, the set $\{\twoA, \twoB, \threeAS \} \cup CP$ does not form a generating set.  
\end{proof}

\noindent({\bf Proof of Theorem~\ref{Extend-of-Polyak}})

\noindent(\emph{Proof of (1)}.)

\noindent{(If part)}
If $\ast \in \{c, d, e, f \}$, the claim is immediately obtained by Corollary~\ref{Cor:3*Gen}.  
Hence, the remaining cases are $\threeB$ and $\threeG$.  The argument of each case is given below.  
\begin{itemize}
\item[($\threeB$)] 
    Let $S = \{ \threeB, \twoC, \twoD \} \cup CP$.  
    By Proposition~\ref{RI-gene}, 
    \[ \R I \prec S .\]
    By Lemma~\ref{Generating3}(9), 
    \[ \threeA \prec S .\]
    By Lemma~\ref{Generating3}(14), 
    \[ \threeF \prec S .\]
    By Lemma~\ref{Generating2}(5)(6), 
    \[ \twoA \prec S , \twoB \prec S .\] 
    Hence $S$ generates $\R I \cup \R \II \cup \{\threeB, \threeA,\threeF\}$. 
    By Proposition~\ref{3ast-all2},
    \[ \R \III \prec S .\]
    $S$ is a generating set.
\item[($\threeG$)] 
    Let $S = \{ \threeG, \twoC, \twoD \} \cup CP$.  
    By Proposition~\ref{RI-gene}, 
    \[ \R I \prec S .\]
    By Lemma~\ref{Generating3}(16), 
    \[ \threeH \prec S .\]
    By Lemma~\ref{Generating3}(13), 
    \[ \threeE \prec S .\]
    By Lemma~\ref{Generating2}(5)(6), 
    \[ \twoA \prec S , \twoB \prec S .\] 
    Hence $S$ generates $\R I \cup \R \II \cup \{\threeG, \threeH,\threeE \}$. 
    By Proposition~\ref{3ast-all2},
    \[ \R \III \prec S .\]
    $S$ is a generating set.
\end{itemize}
\noindent{(Only if part)}
Proposition~\ref{braid&2cd} implies that the set of $\{\twoA, \twoB, \threeAS \} \cup CP$ is not a generating set.  Therefore, if $S$ is a generating set, it must include either $\twoC$ or $\twoD$ as \tII.  More precisely, $S$ must contain either $\{\twoC, \threeAS\} \cup CP$ or $\{\twoD, \threeAS\} \cup CP$ as a subset.  By Theorem~\ref{prop:cdAlone}, there is no generating set whose the subset of the \tII~ moves equals $\{\twoC \}$ or $\{\twoD \}$.  Hence, $S$ must contain $\{ \twoC, \twoD, \threeAS \} \cup CP$.  It implies for any $CP$, $S$ includes $\{ \twoC, \twoD, \threeAS \} \cup CP$, which completes the proof.  

\noindent(\emph{Proof of (2)}.)
We treat the cases $\threeA$ and $\threeH$ separately.
\begin{itemize}
    \item[($\threeA$)]
        Let $S = \{ \threeA, \twoC, \twoD \} \cup CP$.
        By Proposition~\ref{RI-gene}, 
        \[ \R I \prec S .\]
        By Lemma~\ref{Generating3}(11), 
        \[ \threeC \prec S .\]
        By Lemma~\ref{Generating2}(5)(6), 
        \[ \twoA \prec S, \twoB \prec S .\] 
        Hence $S$ generates $\R I \cup \R \II \cup \{\threeA,\threeC\}$.
        
        By Proposition~\ref{3ast-all2},
        \[ \R \III \prec S .\]
        $S$ is a generating set.
    \item[($\threeH$)]
        Let $S = \{ \threeH, \twoC, \twoD \} \cup CP$.
        By Proposition~\ref{RI-gene}, 
        \[ \R I \prec S .\]
        By Lemma~\ref{Generating3}(12), 
        \[ \threeD \prec S .\]
        By Lemma~\ref{Generating2}(5)(6), 
        \[ \twoA \prec S , \twoB \prec S .\] 
        Hence $S$ generates $\R I \cup \R \II \cup \{\threeH,\threeD\}$.
        
        By Proposition~\ref{3ast-all2},
        \[ \R \III \prec S .\]
        $S$ is a generating set.
\end{itemize}
\hfill$\Box$

\section{Tables of generating sets}

\begin{rem}\label{remhomotopy}
In \cite[Lemma~2.23] {CaprauScott2022}, Caprau and Scott conclude that the following four sets 
\[\{ \threeA, \twoA, \oneC, \oneD\}, 
\{ \threeA, \twoB, \oneC, \oneD\}, 
\{ \threeH, \twoA, \oneA, \oneB\}, 
\{ \threeH, \twoB, \oneA, \oneB \}
\]
are not generating sets, based on the inability to realize certain moves within a single movie move. 
We respect this conclusion and acknowledge its validity within their framework.  
Our observations examined all five known movie movies exhaustively (see Figure~\ref{fig:MM}).  
For example, for the set $\{ \threeA, \twoA, \oneC, \oneD\}$ we verified the following limitations:
    \begin{itemize}
        \item MI $($\tI$)$     \quad : By Lemma~\ref{Generating1}, in the absence of $\twoC$ and $\twoD$, generation of \tI~moves cannot be guaranteed.
        \item M\II $($\tII$)$  \quad : By Lemma~\ref{Generating2}, the generation of $\twoB$ cannot be guaranteed in the absence of $\threeC, \threeD, \threeE$ and $\threeF$; moreover, the generation of $\twoC$ and $\twoD$ cannot be guaranteed $\threeH$.
        \item M\III   \qquad\quad\quad \;        :In M\III, since the $2$-gons share a common double point, there seems to be no indication that $\{\twoA, \twoB\}$ produces $\{\twoC, \twoD\}$ and vise versa. 
        \item MI\!V $($\tIII$)$ : By Lemma~\ref{Generating3}, again due to the absence of $\twoC$ and $\twoD$, generation of \tIII~moves cannot be ensured.
        \item MV \qquad\qquad\; : In MV, starting from $\threeA$ only or $\threeH$ only, we do not see any way to add a new $3$-move.   It is elementary to show them, and we leave the details to  the reader.  
    \end{itemize}
    These observations suggest that the five known movie moves do not suffice to generate the missing moves. However, this does not constitute a complete proof of non-generation, since infinitely many loops in the $1$-skeleton of higher homotopies (arising from singularities of arbitrary codimension) remain unchecked.  Hence, while the result in  \cite[Lemma~2.23]{CaprauScott2022} demonstrates non-generation within a specific movie framework, the broader question of non-generation in the full homotopical sense remains unsolved.  We believe that developing new invariants will be essential to settle this question.  \footnote{This  remark had been shared with Professor Caprau prior to submission, in the spirit of constructive dialogue.}
\end{rem}

\begin{figure}
\begin{picture}(400,600)
\put(0,600){\LARGE{MI}}
        \put(0,400){\includegraphics[width=0.39\linewidth]{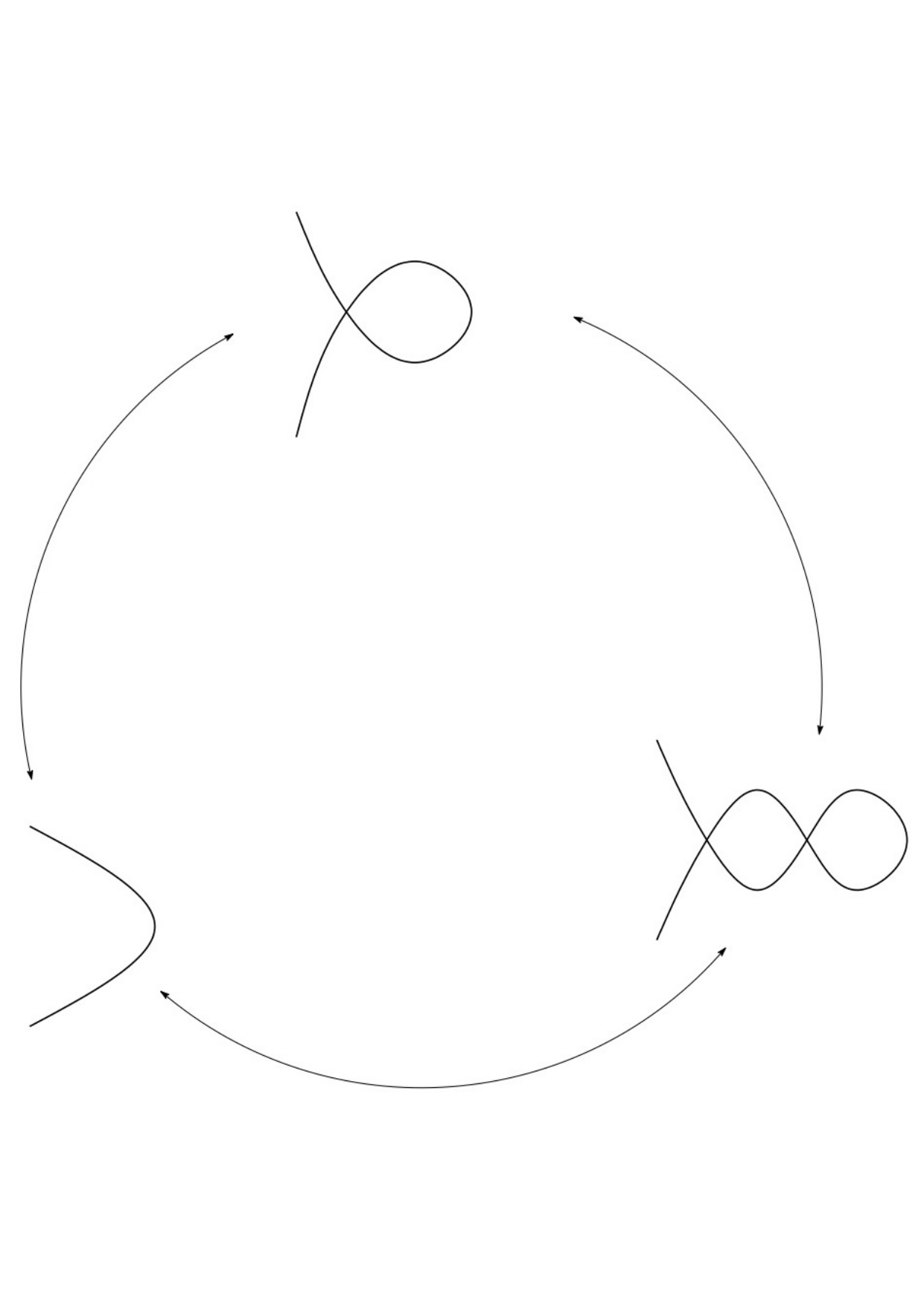}} 
\put(200,600){\LARGE{M\II}}
        \put(190,420){\includegraphics[width=0.6\linewidth]{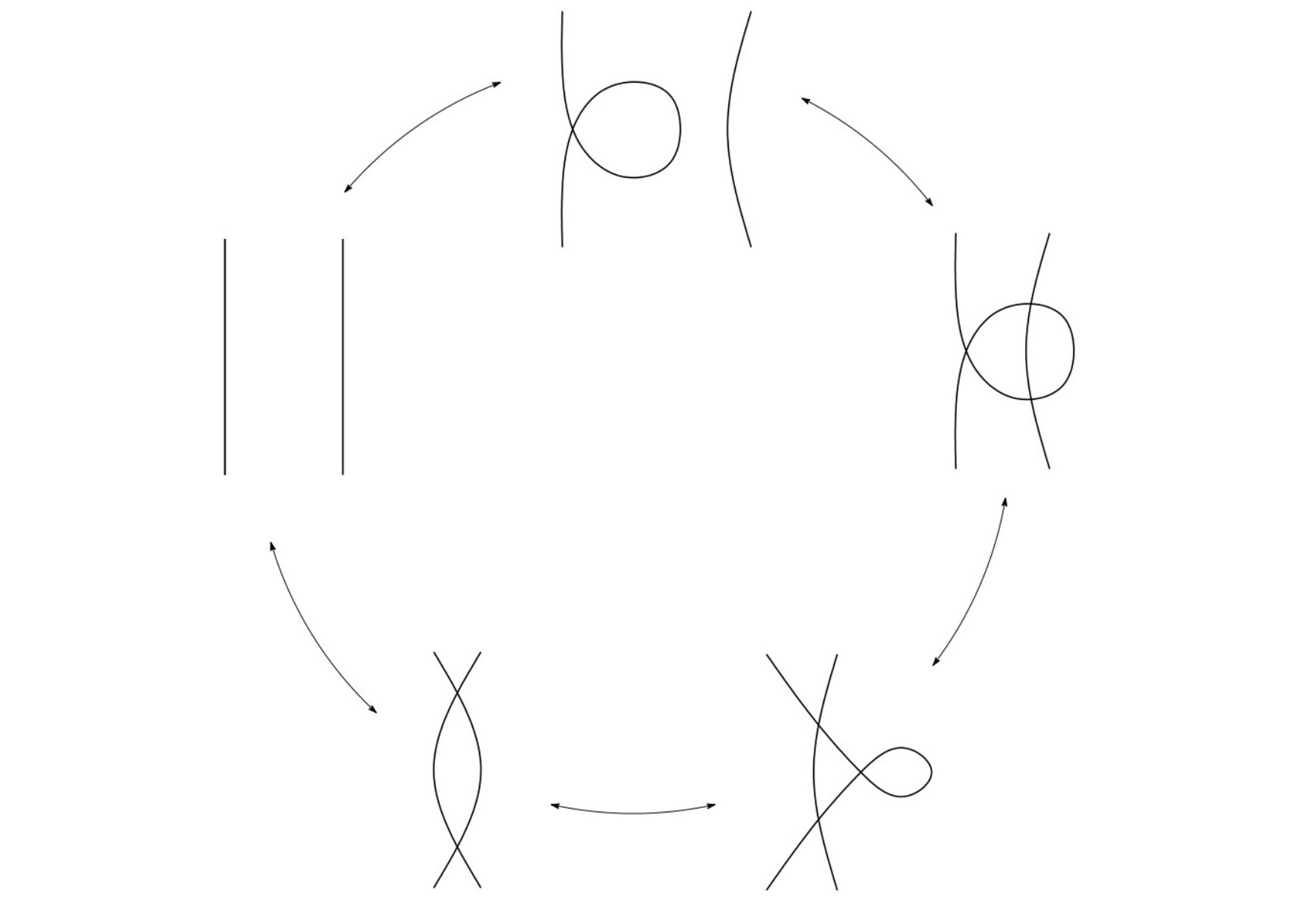}}
\put(0,390){\LARGE{M\III}}
        \put(-30,215){\includegraphics[width=0.5\linewidth]{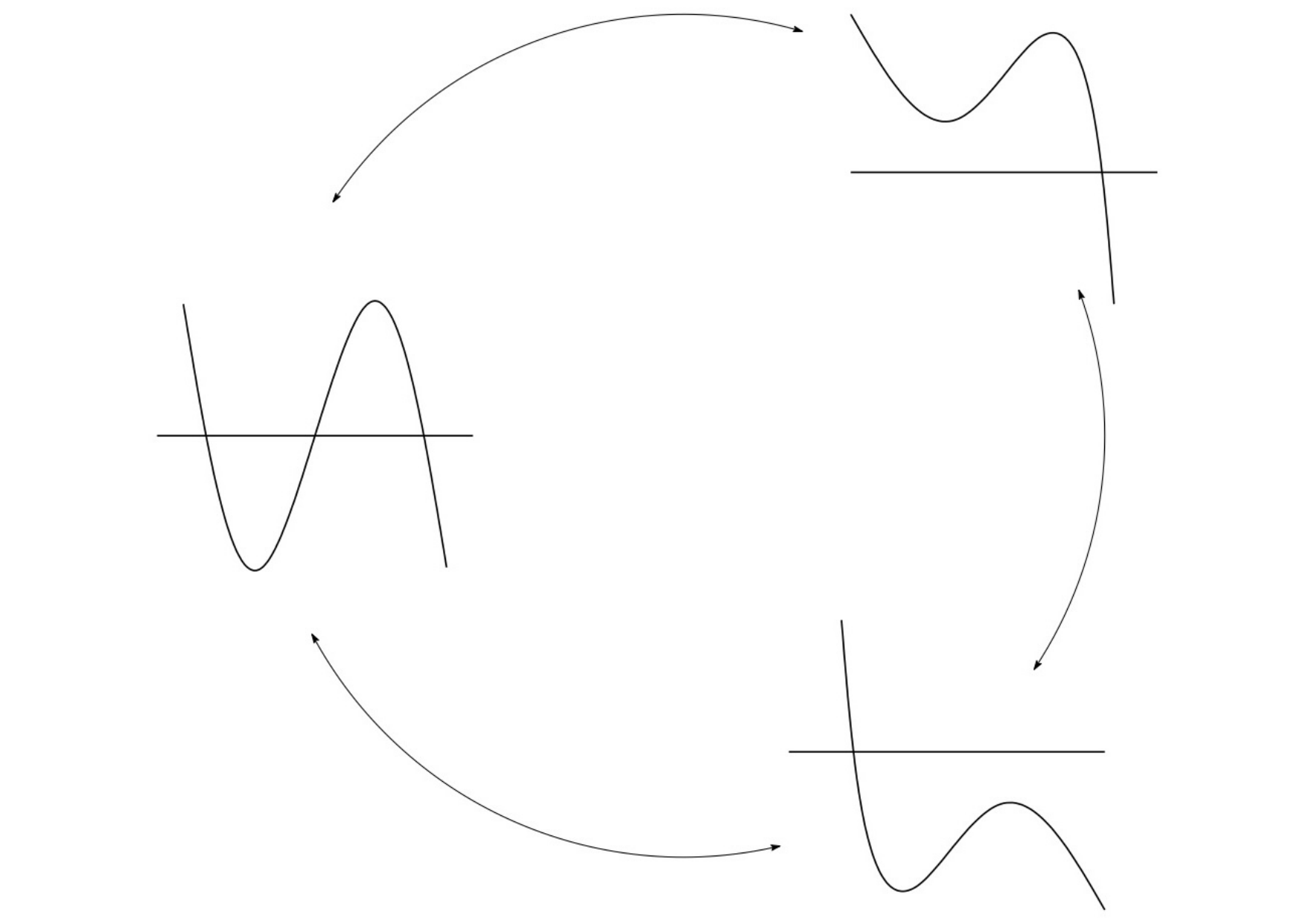}}
\put(200,390){\LARGE{MI\!V}}
        \put(185,195){\includegraphics[width=0.62\linewidth]{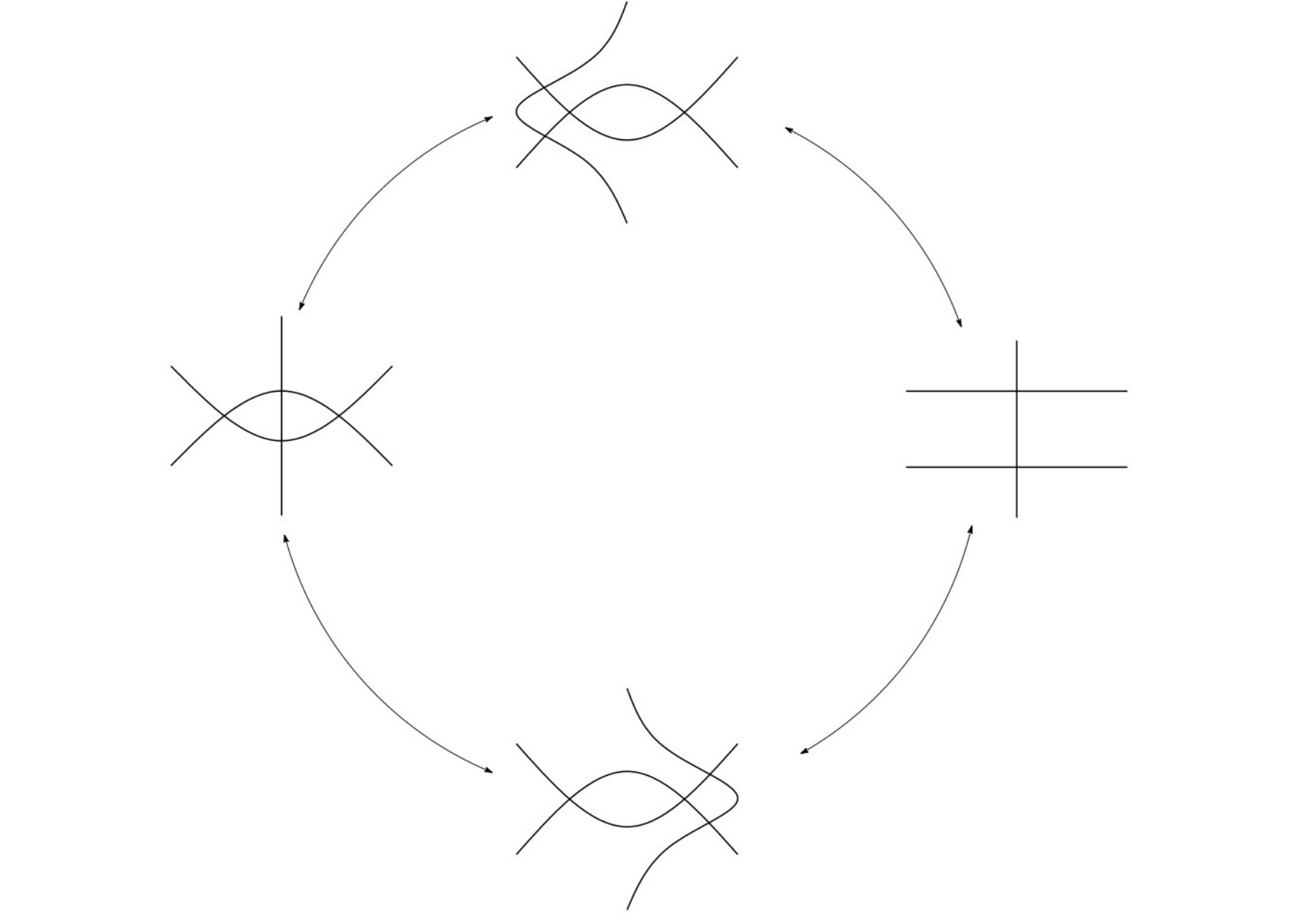}}
\put(70,170){\LARGE{MV}}
        \put(85,0){\includegraphics[width=0.6\linewidth]{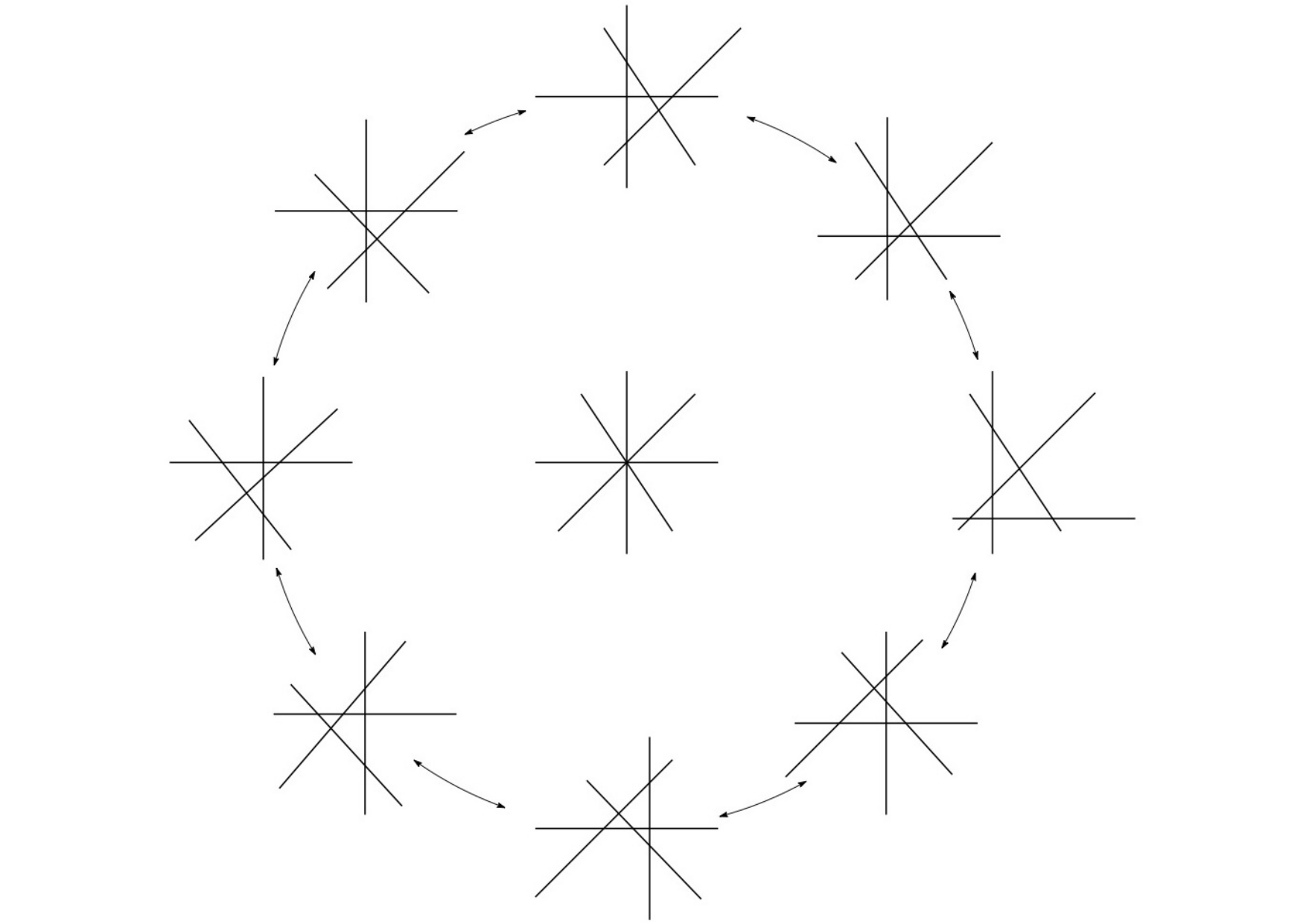}}
        \end{picture}
\caption{MI--MV.}\label{fig:MM}
\end{figure}

The symbol (M)GS stands for a (minimal) generating set.  The symbol $\times$ means that the corresponding set is not a  generating set.   
The symbol “?” indicates that it is not known whether the set is a generating set or not.

\begin{table}
\caption{Sets of Reidemeister moves containing $\threeA$ or $\threeH$ and their status. }\label{table:classificationAH}
\begin{tabular}{|c|c|c|c|} 
    \hline
        For $ \threeA$ (non-braid-type)       &          & For $\threeH$ (non-braid-type)             &               \\
    \hline
   $\{\threeA, \twoA, \oneA, \oneB \}       $ & MGS      & $\{\threeH, \twoA, \oneA, \oneB \}$        &  ?      \\
   $\{\threeA, \twoA, \oneA, \oneC \}       $ & MGS      & $\{\threeH, \twoA, \oneA, \oneC \}$        & MGS     \\
   $\{\threeA, \twoA, \oneB, \oneD \}       $ & MGS      & $\{\threeH, \twoA, \oneB, \oneD \}$        & MGS     \\
   $\{\threeA, \twoA, \oneC, \oneD \}       $ &  ?       & $\{\threeH, \twoA, \oneC, \oneD \}$        & MGS     \\
   $\{\threeA, \twoB, \oneA, \oneB \}       $ & MGS      & $\{\threeH, \twoB, \oneA, \oneB \}$        &  ?      \\
   $\{\threeA, \twoB, \oneA, \oneC \}       $ & MGS      & $\{\threeH, \twoB, \oneA, \oneC \}$        & MGS     \\
   $\{\threeA, \twoB, \oneB, \oneD \}       $ & MGS      & $\{\threeH, \twoB, \oneB, \oneD \}$        & MGS     \\
   $\{\threeA, \twoB, \oneC, \oneD \}       $ &  ?       & $\{\threeH, \twoB, \oneC, \oneD \}$        & MGS     \\
   $\{\threeA, \twoA, \twoB, \oneA, \oneB \}$ & GS       & $\{\threeH, \twoA, \twoB, \oneA, \oneB \}$ & ?        \\
   $\{\threeA, \twoA, \twoB, \oneA, \oneC \}$ & GS       & $\{\threeH, \twoA, \twoB, \oneA, \oneC \}$ & GS       \\
   $\{\threeA, \twoA, \twoB, \oneA, \oneD \}$ & $\times$ & $\{\threeH, \twoA, \twoB, \oneA, \oneD \}$ & $\times$ \\
   $\{\threeA, \twoA, \twoB, \oneB, \oneC \}$ & $\times$ & $\{\threeH, \twoA, \twoB, \oneB, \oneC \}$ & $\times$ \\
   $\{\threeA, \twoA, \twoB, \oneB, \oneD \}$ & GS       & $\{\threeH, \twoA, \twoB, \oneB, \oneD \}$ & GS       \\
   $\{\threeA, \twoA, \twoB, \oneC, \oneD \}$ & ?        & $\{\threeH, \twoA, \twoB, \oneC, \oneD \}$ & GS       \\
   $\{\threeA, \twoA, \twoC, \oneA, \oneB \}$ & GS       & $\{\threeH, \twoA, \twoC, \oneA, \oneB \}$ & ?        \\
   $\{\threeA, \twoA, \twoC, \oneA, \oneC \}$ & GS       & $\{\threeH, \twoA, \twoC, \oneA, \oneC \}$ & GS       \\
   $\{\threeA, \twoA, \twoC, \oneA, \oneD \}$ & $\times$ & $\{\threeH, \twoA, \twoC, \oneA, \oneD \}$ & $\times$ \\
   $\{\threeA, \twoA, \twoC, \oneB, \oneC \}$ & $\times$ & $\{\threeH, \twoA, \twoC, \oneB, \oneC \}$ & $\times$ \\
   $\{\threeA, \twoA, \twoC, \oneB, \oneD \}$ & GS       & $\{\threeH, \twoA, \twoC, \oneB, \oneD \}$ & GS       \\
   $\{\threeA, \twoA, \twoC, \oneC, \oneD \}$ & ?        & $\{\threeH, \twoA, \twoC, \oneC, \oneD \}$ & GS       \\
   $\{\threeA, \twoA, \twoD, \oneA, \oneB \}$ & GS       & $\{\threeH, \twoA, \twoD, \oneA, \oneB \}$ & ?        \\
   $\{\threeA, \twoA, \twoD, \oneA, \oneC \}$ & GS       & $\{\threeH, \twoA, \twoD, \oneA, \oneC \}$ & GS       \\
   $\{\threeA, \twoA, \twoD, \oneA, \oneD \}$ & $\times$ & $\{\threeH, \twoA, \twoD, \oneA, \oneD \}$ & $\times$ \\
   $\{\threeA, \twoA, \twoD, \oneB, \oneC \}$ & $\times$ & $\{\threeH, \twoA, \twoD, \oneB, \oneC \}$ & $\times$ \\
   $\{\threeA, \twoA, \twoD, \oneB, \oneD \}$ & GS       & $\{\threeH, \twoA, \twoD, \oneB, \oneD \}$ & GS       \\
   $\{\threeA, \twoA, \twoD, \oneC, \oneD \}$ & ?        & $\{\threeH, \twoA, \twoD, \oneC, \oneD \}$ & GS       \\
   $\{\threeA, \twoB, \twoC, \oneA, \oneB \}$ & GS       & $\{\threeH, \twoB, \twoC, \oneA, \oneB \}$ & ?        \\
   $\{\threeA, \twoB, \twoC, \oneA, \oneC \}$ & GS       & $\{\threeH, \twoB, \twoC, \oneA, \oneC \}$ & GS       \\
   $\{\threeA, \twoB, \twoC, \oneA, \oneD \}$ & $\times$ & $\{\threeH, \twoB, \twoC, \oneA, \oneD \}$ & $\times$ \\
   $\{\threeA, \twoB, \twoC, \oneB, \oneC \}$ & $\times$ & $\{\threeH, \twoB, \twoC, \oneB, \oneC \}$ & $\times$ \\
   $\{\threeA, \twoB, \twoC, \oneB, \oneD \}$ & GS       & $\{\threeH, \twoB, \twoC, \oneB, \oneD \}$ & GS       \\
   $\{\threeA, \twoB, \twoC, \oneC, \oneD \}$ & ?        & $\{\threeH, \twoB, \twoC, \oneC, \oneD \}$ & GS       \\
   $\{\threeA, \twoB, \twoD, \oneA, \oneB \}$ & GS       & $\{\threeH, \twoB, \twoD, \oneA, \oneB \}$ & ?        \\
   $\{\threeA, \twoB, \twoD, \oneA, \oneC \}$ & GS       & $\{\threeH, \twoB, \twoD, \oneA, \oneC \}$ & GS       \\
   $\{\threeA, \twoB, \twoD, \oneA, \oneD \}$ & $\times$ & $\{\threeH, \twoB, \twoD, \oneA, \oneD \}$ & $\times$ \\
   $\{\threeA, \twoB, \twoD, \oneB, \oneC \}$ & $\times$ & $\{\threeH, \twoB, \twoD, \oneB, \oneC \}$ & $\times$ \\
   $\{\threeA, \twoB, \twoD, \oneB, \oneD \}$ & GS       & $\{\threeH, \twoB, \twoD, \oneB, \oneD \}$ & GS       \\
   $\{\threeA, \twoB, \twoD, \oneC, \oneD \}$ & ?        & $\{\threeH, \twoB, \twoD, \oneC, \oneD \}$ & GS       \\
   $\{\threeA, \twoC, \twoD, \oneA, \oneB \}$ & MGS      & $\{\threeH, \twoC, \twoD, \oneA, \oneB \}$ & MGS      \\
   $\{\threeA, \twoC, \twoD, \oneA, \oneC \}$ & MGS      & $\{\threeH, \twoC, \twoD, \oneA, \oneC \}$ & MGS      \\
   $\{\threeA, \twoC, \twoD, \oneA, \oneD \}$ & $\times$ & $\{\threeH, \twoC, \twoD, \oneA, \oneD \}$ & $\times$ \\
   $\{\threeA, \twoC, \twoD, \oneB, \oneC \}$ & $\times$ & $\{\threeH, \twoC, \twoD, \oneB, \oneC \}$ & $\times$ \\
   $\{\threeA, \twoC, \twoD, \oneB, \oneD \}$ & MGS      & $\{\threeH, \twoC, \twoD, \oneB, \oneD \}$ & MGS      \\
   $\{\threeA, \twoC, \twoD, \oneC, \oneD \}$ & MGS      & $\{\threeH, \twoC, \twoD, \oneC, \oneD \}$ & MGS      \\
\hline
\end{tabular}
\end{table}

\clearpage

\begin{table}
\caption{Sets of Reidemeister moves containing $\threeB$, $\threeC$, $\threeD$ and their status.}\label{table:classificationBCD}
\begin{tabular}{|c|c|c|c|c|c|} 
\hline
    For $\threeB$ (braid-type) &  & For $\threeC$ (braid-type) &  & For $\threeD$ (braid-type)  &    \\  
\hline
$\{\threeB, \twoA, \twoB, \oneA, \oneB \}$ & $\times$ &
            $\{\threeC, \twoA, \twoB, \oneA, \oneB \}$ & $\times$ &
                    $\{\threeD, \twoA, \twoB, \oneA, \oneB \}$ & $\times$ \\
   $\{\threeB, \twoA, \twoB, \oneA, \oneC \}$ & $\times$ &
            $\{\threeC, \twoA, \twoB, \oneA, \oneC \}$ & $\times$ &
                    $\{\threeD, \twoA, \twoB, \oneA, \oneC \}$ & $\times$ \\
   $\{\threeB, \twoA, \twoB, \oneA, \oneD \}$ & $\times$ &
            $\{\threeC, \twoA, \twoB, \oneA, \oneD \}$ & $\times$ &
                    $\{\threeD, \twoA, \twoB, \oneA, \oneD \}$ & $\times$ \\
   $\{\threeB, \twoA, \twoB, \oneB, \oneC \}$ & $\times$ &
            $\{\threeC, \twoA, \twoB, \oneB, \oneC \}$ & $\times$ &
                    $\{\threeD, \twoA, \twoB, \oneB, \oneC \}$ & $\times$ \\
   $\{\threeB, \twoA, \twoB, \oneB, \oneD \}$ & $\times$ &
            $\{\threeC, \twoA, \twoB, \oneB, \oneD \}$ & $\times$ &
                    $\{\threeD, \twoA, \twoB, \oneB, \oneD \}$ & $\times$ \\
   $\{\threeB, \twoA, \twoB, \oneC, \oneD \}$ & $\times$ &
            $\{\threeC, \twoA, \twoB, \oneC, \oneD \}$ & $\times$ &
                    $\{\threeD, \twoA, \twoB, \oneC, \oneD \}$ & $\times$ \\
   $\{\threeB, \twoA, \twoC, \oneA, \oneB \}$ & $\times$ &
            $\{\threeC, \twoA, \twoC, \oneA, \oneB \}$ & $\times$ &
                    $\{\threeD, \twoA, \twoC, \oneA, \oneB \}$ & $\times$ \\
   $\{\threeB, \twoA, \twoC, \oneA, \oneC \}$ & $\times$ &
            $\{\threeC, \twoA, \twoC, \oneA, \oneC \}$ & $\times$ &
                    $\{\threeD, \twoA, \twoC, \oneA, \oneC \}$ & $\times$ \\
   $\{\threeB, \twoA, \twoC, \oneA, \oneD \}$ & $\times$ &
            $\{\threeC, \twoA, \twoC, \oneA, \oneD \}$ & $\times$ &
                    $\{\threeD, \twoA, \twoC, \oneA, \oneD \}$ & $\times$ \\   
   $\{\threeB, \twoA, \twoC, \oneB, \oneC \}$ & $\times$ &
            $\{\threeC, \twoA, \twoC, \oneB, \oneC \}$ & $\times$ &
                    $\{\threeD, \twoA, \twoC, \oneB, \oneC \}$ & $\times$ \\
   $\{\threeB, \twoA, \twoC, \oneB, \oneD \}$ & $\times$ &
            $\{\threeC, \twoA, \twoC, \oneB, \oneD \}$ & $\times$ &
                    $\{\threeD, \twoA, \twoC, \oneB, \oneD \}$ & $\times$ \\
   $\{\threeB, \twoA, \twoC, \oneC, \oneD \}$ & $\times$ &
            $\{\threeC, \twoA, \twoC, \oneC, \oneD \}$ & $\times$ &
                    $\{\threeD, \twoA, \twoC, \oneC, \oneD \}$ & $\times$ \\
   $\{\threeB, \twoA, \twoD, \oneA, \oneB \}$ & $\times$ &
            $\{\threeC, \twoA, \twoD, \oneA, \oneB \}$ & $\times$ &
                    $\{\threeD, \twoA, \twoD, \oneA, \oneB \}$ & $\times$ \\
   $\{\threeB, \twoA, \twoD, \oneA, \oneC \}$ & $\times$ &
            $\{\threeC, \twoA, \twoD, \oneA, \oneC \}$ & $\times$ &
                    $\{\threeD, \twoA, \twoD, \oneA, \oneC \}$ & $\times$ \\
   $\{\threeB, \twoA, \twoD, \oneA, \oneD \}$ & $\times$ &
            $\{\threeC, \twoA, \twoD, \oneA, \oneD \}$ & $\times$ &
                    $\{\threeD, \twoA, \twoD, \oneA, \oneD \}$ & $\times$ \\
   $\{\threeB, \twoA, \twoD, \oneB, \oneC \}$ & $\times$ &
            $\{\threeC, \twoA, \twoD, \oneB, \oneC \}$ & $\times$ &
                    $\{\threeD, \twoA, \twoD, \oneB, \oneC \}$ & $\times$ \\
   $\{\threeB, \twoA, \twoD, \oneB, \oneD \}$ & $\times$ &
            $\{\threeC, \twoA, \twoD, \oneB, \oneD \}$ & $\times$ &
                    $\{\threeD, \twoA, \twoD, \oneB, \oneD \}$ & $\times$ \\
   $\{\threeB, \twoA, \twoD, \oneC, \oneD \}$ & $\times$ &
            $\{\threeC, \twoA, \twoD, \oneC, \oneD \}$ & $\times$ &
                    $\{\threeD, \twoA, \twoD, \oneC, \oneD \}$ & $\times$ \\
   $\{\threeB, \twoB, \twoC, \oneA, \oneB \}$ & $\times$ &
            $\{\threeC, \twoB, \twoC, \oneA, \oneB \}$ & $\times$ &
                    $\{\threeD, \twoB, \twoC, \oneA, \oneB \}$ & $\times$ \\
   $\{\threeB, \twoB, \twoC, \oneA, \oneC \}$ & $\times$ &
            $\{\threeC, \twoB, \twoC, \oneA, \oneC \}$ & $\times$ &
                    $\{\threeD, \twoB, \twoC, \oneA, \oneC \}$ & $\times$ \\
   $\{\threeB, \twoB, \twoC, \oneA, \oneD \}$ & $\times$ &
            $\{\threeC, \twoB, \twoC, \oneA, \oneD \}$ & $\times$ &
                    $\{\threeD, \twoB, \twoC, \oneA, \oneD \}$ & $\times$ \\
   $\{\threeB, \twoB, \twoC, \oneB, \oneC \}$ & $\times$ &
            $\{\threeC, \twoB, \twoC, \oneB, \oneC \}$ & $\times$ &
                    $\{\threeD, \twoB, \twoC, \oneB, \oneC \}$ & $\times$ \\
   $\{\threeB, \twoB, \twoC, \oneB, \oneD \}$ & $\times$ &
            $\{\threeC, \twoB, \twoC, \oneB, \oneD \}$ & $\times$ &
                    $\{\threeD, \twoB, \twoC, \oneB, \oneD \}$ & $\times$ \\
   $\{\threeB, \twoB, \twoC, \oneC, \oneD \}$ & $\times$ &
            $\{\threeC, \twoB, \twoC, \oneC, \oneD \}$ & $\times$ &
                    $\{\threeD, \twoB, \twoC, \oneC, \oneD \}$ & $\times$ \\
   $\{\threeB, \twoB, \twoD, \oneA, \oneB \}$ & $\times$ &
            $\{\threeC, \twoB, \twoD, \oneA, \oneB \}$ & $\times$ &
                    $\{\threeD, \twoB, \twoD, \oneA, \oneB \}$ & $\times$ \\
   $\{\threeB, \twoB, \twoD, \oneA, \oneC \}$ & $\times$ &
            $\{\threeC, \twoB, \twoD, \oneA, \oneC \}$ & $\times$ &
                    $\{\threeD, \twoB, \twoD, \oneA, \oneC \}$ & $\times$ \\
   $\{\threeB, \twoB, \twoD, \oneA, \oneD \}$ & $\times$ &
            $\{\threeC, \twoB, \twoD, \oneA, \oneD \}$ & $\times$ &
                    $\{\threeD, \twoB, \twoD, \oneA, \oneD \}$ & $\times$ \\
   $\{\threeB, \twoB, \twoD, \oneB, \oneC \}$ & $\times$ &
            $\{\threeC, \twoB, \twoD, \oneB, \oneC \}$ & $\times$ &
                    $\{\threeD, \twoB, \twoD, \oneB, \oneC \}$ & $\times$ \\
   $\{\threeB, \twoB, \twoD, \oneB, \oneD \}$ & $\times$ &
            $\{\threeC, \twoB, \twoD, \oneB, \oneD \}$ & $\times$ &
                    $\{\threeD, \twoB, \twoD, \oneB, \oneD \}$ & $\times$ \\ 
   $\{\threeB, \twoB, \twoD, \oneC, \oneD \}$ & $\times$ &
            $\{\threeC, \twoB, \twoD, \oneC, \oneD \}$ & $\times$ &
                    $\{\threeD, \twoB, \twoD, \oneC, \oneD \}$ & $\times$ \\
   $\{\threeB, \twoC, \twoD, \oneA, \oneB \}$ & MGS      &
            $\{\threeC, \twoC, \twoD, \oneA, \oneB \}$ & MGS      &
                    $\{\threeD, \twoC, \twoD, \oneA, \oneB \}$ & MGS      \\
   $\{\threeB, \twoC, \twoD, \oneA, \oneC \}$ & MGS      &
            $\{\threeC, \twoC, \twoD, \oneA, \oneC \}$ & MGS      &
                    $\{\threeD, \twoC, \twoD, \oneA, \oneC \}$ & MGS      \\
   $\{\threeB, \twoC, \twoD, \oneA, \oneD \}$ & $\times$ &
            $\{\threeC, \twoC, \twoD, \oneA, \oneD \}$ & $\times$ &
                    $\{\threeD, \twoC, \twoD, \oneA, \oneD \}$ & $\times$ \\
   $\{\threeB, \twoC, \twoD, \oneB, \oneC \}$ & $\times$ &
            $\{\threeC, \twoC, \twoD, \oneB, \oneC \}$ & $\times$ &
                    $\{\threeD, \twoC, \twoD, \oneB, \oneC \}$ & $\times$ \\
   $\{\threeB, \twoC, \twoD, \oneB, \oneD \}$ & MGS      &
            $\{\threeC, \twoC, \twoD, \oneB, \oneD \}$ & MGS      &
                    $\{\threeD, \twoC, \twoD, \oneB, \oneD \}$ & MGS      \\
   $\{\threeB, \twoC, \twoD, \oneC, \oneD \}$ & MGS      &
            $\{\threeC, \twoC, \twoD, \oneC, \oneD \}$ & MGS      &
                    $\{\threeD, \twoC, \twoD, \oneC, \oneD \}$ & MGS      \\

\hline
\end{tabular}
\end{table}

\begin{table}
\caption{Sets of Reidemeister moves containing $\threeE$, $\threeF$, $\threeG$ and their status.}\label{table:classificationDEF}
\begin{tabular}{|c|c|c|c|c|c|} 
\hline
    For $\threeE$ (braid-type) &  & For $\threeF$ (braid-type) &  & For $\threeG$ (braid-type)  &    \\  
\hline

$\{\threeE, \twoA, \twoB, \oneA, \oneB \}$ & $\times$ &
            $\{\threeF, \twoA, \twoB, \oneA, \oneB \}$ & $\times$ &
                    $\{\threeG, \twoA, \twoB, \oneA, \oneB \}$ & $\times$ \\
   $\{\threeE, \twoA, \twoB, \oneA, \oneC \}$ & $\times$ &
            $\{\threeF, \twoA, \twoB, \oneA, \oneC \}$ & $\times$ &
                    $\{\threeG, \twoA, \twoB, \oneA, \oneC \}$ & $\times$ \\
   $\{\threeE, \twoA, \twoB, \oneA, \oneD \}$ & $\times$ &
            $\{\threeF, \twoA, \twoB, \oneA, \oneD \}$ & $\times$ &
                    $\{\threeG, \twoA, \twoB, \oneA, \oneD \}$ & $\times$ \\
   $\{\threeE, \twoA, \twoB, \oneB, \oneC \}$ & $\times$ &
            $\{\threeF, \twoA, \twoB, \oneB, \oneC \}$ & $\times$ &
                    $\{\threeG, \twoA, \twoB, \oneB, \oneC \}$ & $\times$ \\
   $\{\threeE, \twoA, \twoB, \oneB, \oneD \}$ & $\times$ &
            $\{\threeF, \twoA, \twoB, \oneB, \oneD \}$ & $\times$ &
                    $\{\threeG, \twoA, \twoB, \oneB, \oneD \}$ & $\times$ \\
   $\{\threeE, \twoA, \twoB, \oneC, \oneD \}$ & $\times$ &
            $\{\threeF, \twoA, \twoB, \oneC, \oneD \}$ & $\times$ &
                    $\{\threeG, \twoA, \twoB, \oneC, \oneD \}$ & $\times$ \\
   $\{\threeE, \twoA, \twoC, \oneA, \oneB \}$ & $\times$ &
            $\{\threeF, \twoA, \twoC, \oneA, \oneB \}$ & $\times$ &
                    $\{\threeG, \twoA, \twoC, \oneA, \oneB \}$ & $\times$ \\
   $\{\threeE, \twoA, \twoC, \oneA, \oneC \}$ & $\times$ &
            $\{\threeF, \twoA, \twoC, \oneA, \oneC \}$ & $\times$ &
                    $\{\threeG, \twoA, \twoC, \oneA, \oneC \}$ & $\times$ \\
   $\{\threeE, \twoA, \twoC, \oneA, \oneD \}$ & $\times$ &
            $\{\threeF, \twoA, \twoC, \oneA, \oneD \}$ & $\times$ &
                    $\{\threeG, \twoA, \twoC, \oneA, \oneD \}$ & $\times$ \\   
   $\{\threeE, \twoA, \twoC, \oneB, \oneC \}$ & $\times$ &
            $\{\threeF, \twoA, \twoC, \oneB, \oneC \}$ & $\times$ &
                    $\{\threeG, \twoA, \twoC, \oneB, \oneC \}$ & $\times$ \\
   $\{\threeE, \twoA, \twoC, \oneB, \oneD \}$ & $\times$ &
            $\{\threeF, \twoA, \twoC, \oneB, \oneD \}$ & $\times$ &
                    $\{\threeG, \twoA, \twoC, \oneB, \oneD \}$ & $\times$ \\
   $\{\threeE, \twoA, \twoC, \oneC, \oneD \}$ & $\times$ &
            $\{\threeF, \twoA, \twoC, \oneC, \oneD \}$ & $\times$ &
                    $\{\threeG, \twoA, \twoC, \oneC, \oneD \}$ & $\times$ \\
   $\{\threeE, \twoA, \twoD, \oneA, \oneB \}$ & $\times$ &
            $\{\threeF, \twoA, \twoD, \oneA, \oneB \}$ & $\times$ &
                    $\{\threeG, \twoA, \twoD, \oneA, \oneB \}$ & $\times$ \\
   $\{\threeE, \twoA, \twoD, \oneA, \oneC \}$ & $\times$ &
            $\{\threeF, \twoA, \twoD, \oneA, \oneC \}$ & $\times$ &
                    $\{\threeG, \twoA, \twoD, \oneA, \oneC \}$ & $\times$ \\
   $\{\threeE, \twoA, \twoD, \oneA, \oneD \}$ & $\times$ &
            $\{\threeF, \twoA, \twoD, \oneA, \oneD \}$ & $\times$ &
                    $\{\threeG, \twoA, \twoD, \oneA, \oneD \}$ & $\times$ \\
   $\{\threeE, \twoA, \twoD, \oneB, \oneC \}$ & $\times$ &
            $\{\threeF, \twoA, \twoD, \oneB, \oneC \}$ & $\times$ &
                    $\{\threeG, \twoA, \twoD, \oneB, \oneC \}$ & $\times$ \\
   $\{\threeE, \twoA, \twoD, \oneB, \oneD \}$ & $\times$ &
            $\{\threeF, \twoA, \twoD, \oneB, \oneD \}$ & $\times$ &
                    $\{\threeG, \twoA, \twoD, \oneB, \oneD \}$ & $\times$ \\
   $\{\threeE, \twoA, \twoD, \oneC, \oneD \}$ & $\times$ &
            $\{\threeF, \twoA, \twoD, \oneC, \oneD \}$ & $\times$ &
                    $\{\threeG, \twoA, \twoD, \oneC, \oneD \}$ & $\times$ \\
   $\{\threeE, \twoB, \twoC, \oneA, \oneB \}$ & $\times$ &
            $\{\threeF, \twoB, \twoC, \oneA, \oneB \}$ & $\times$ &
                    $\{\threeG, \twoB, \twoC, \oneA, \oneB \}$ & $\times$ \\
   $\{\threeE, \twoB, \twoC, \oneA, \oneC \}$ & $\times$ &
            $\{\threeF, \twoB, \twoC, \oneA, \oneC \}$ & $\times$ &
                    $\{\threeG, \twoB, \twoC, \oneA, \oneC \}$ & $\times$ \\
   $\{\threeE, \twoB, \twoC, \oneA, \oneD \}$ & $\times$ &
            $\{\threeF, \twoB, \twoC, \oneA, \oneD \}$ & $\times$ &
                    $\{\threeG, \twoB, \twoC, \oneA, \oneD \}$ & $\times$ \\
   $\{\threeE, \twoB, \twoC, \oneB, \oneC \}$ & $\times$ &
            $\{\threeF, \twoB, \twoC, \oneB, \oneC \}$ & $\times$ &
                    $\{\threeG, \twoB, \twoC, \oneB, \oneC \}$ & $\times$ \\
   $\{\threeE, \twoB, \twoC, \oneB, \oneD \}$ & $\times$ &
            $\{\threeF, \twoB, \twoC, \oneB, \oneD \}$ & $\times$ &
                    $\{\threeG, \twoB, \twoC, \oneB, \oneD \}$ & $\times$ \\
   $\{\threeE, \twoB, \twoC, \oneC, \oneD \}$ & $\times$ &
            $\{\threeF, \twoB, \twoC, \oneC, \oneD \}$ & $\times$ &
                    $\{\threeG, \twoB, \twoC, \oneC, \oneD \}$ & $\times$ \\
   $\{\threeE, \twoB, \twoD, \oneA, \oneB \}$ & $\times$ &
            $\{\threeF, \twoB, \twoD, \oneA, \oneB \}$ & $\times$ &
                    $\{\threeG, \twoB, \twoD, \oneA, \oneB \}$ & $\times$ \\
   $\{\threeE, \twoB, \twoD, \oneA, \oneC \}$ & $\times$ &
            $\{\threeF, \twoB, \twoD, \oneA, \oneC \}$ & $\times$ &
                    $\{\threeG, \twoB, \twoD, \oneA, \oneC \}$ & $\times$ \\
   $\{\threeE, \twoB, \twoD, \oneA, \oneD \}$ & $\times$ &
            $\{\threeF, \twoB, \twoD, \oneA, \oneD \}$ & $\times$ &
                    $\{\threeG, \twoB, \twoD, \oneA, \oneD \}$ & $\times$ \\
   $\{\threeE, \twoB, \twoD, \oneB, \oneC \}$ & $\times$ &
            $\{\threeF, \twoB, \twoD, \oneB, \oneC \}$ & $\times$ &
                    $\{\threeG, \twoB, \twoD, \oneB, \oneC \}$ & $\times$ \\
   $\{\threeE, \twoB, \twoD, \oneB, \oneD \}$ & $\times$ &
            $\{\threeF, \twoB, \twoD, \oneB, \oneD \}$ & $\times$ &
                    $\{\threeG, \twoB, \twoD, \oneB, \oneD \}$ & $\times$ \\ 
   $\{\threeE, \twoB, \twoD, \oneC, \oneD \}$ & $\times$ &
            $\{\threeF, \twoB, \twoD, \oneC, \oneD \}$ & $\times$ &
                    $\{\threeG, \twoB, \twoD, \oneC, \oneD \}$ & $\times$ \\
   $\{\threeE, \twoC, \twoD, \oneA, \oneB \}$ & MGS      &
            $\{\threeF, \twoC, \twoD, \oneA, \oneB \}$ & MGS      &
                    $\{\threeG, \twoC, \twoD, \oneA, \oneB \}$ & MGS      \\
   $\{\threeE, \twoC, \twoD, \oneA, \oneC \}$ & MGS      &
            $\{\threeF, \twoC, \twoD, \oneA, \oneC \}$ & MGS      &
                    $\{\threeG, \twoC, \twoD, \oneA, \oneC \}$ & MGS      \\
   $\{\threeE, \twoC, \twoD, \oneA, \oneD \}$ & $\times$ &
            $\{\threeF, \twoC, \twoD, \oneA, \oneD \}$ & $\times$ &
                    $\{\threeG, \twoC, \twoD, \oneA, \oneD \}$ & $\times$ \\
   $\{\threeE, \twoC, \twoD, \oneB, \oneC \}$ & $\times$ &
            $\{\threeF, \twoC, \twoD, \oneB, \oneC \}$ & $\times$ &
                    $\{\threeG, \twoC, \twoD, \oneB, \oneC \}$ & $\times$ \\
   $\{\threeE, \twoC, \twoD, \oneB, \oneD \}$ & MGS      &
            $\{\threeF, \twoC, \twoD, \oneB, \oneD \}$ & MGS      &
                    $\{\threeG, \twoC, \twoD, \oneB, \oneD \}$ & MGS      \\
   $\{\threeE, \twoC, \twoD, \oneC, \oneD \}$ & MGS      &
            $\{\threeF, \twoC, \twoD, \oneC, \oneD \}$ & MGS      &
                    $\{\threeG, \twoC, \twoD, \oneC, \oneD \}$ & MGS      \\
   
\hline
\end{tabular}
\end{table}

\clearpage

\section*{Acknowledgments}
The authors would like to thank Professor Keiichi Sakai for his valuable  comments.  
The work was partially  supported by KAKENHI number JP25K06999.  
This work was also partially supported by JST SPRING, Japan Grant Number JPMJSP2144 (Shinshu University).
\bibliographystyle{plain}
\bibliography{ListMGS}
\end{document}